\newtheorem{theorem}{Theorem}[section]
\newtheorem{lemma}[theorem]{Lemma}
\newtheorem{corollary}[theorem]{Corollary}
\newtheorem{proposition}[theorem]{Proposition}
\theoremstyle{definition}
\newtheorem{definition}[theorem]{Definition}
\theoremstyle{remark}
\newtheorem{remark}[theorem]{Remark}
\numberwithin{equation}{section}
\begin{document}

\title{Equidistribution of expanding measures with local maximal dimension and Diophantine
Approximation}

% Information for first author
\author{Ronggang Shi}
% Address of record for the research reported here
\address{Department of Mathematics, Ohio State University, Columbus, Ohio 43210}
% Current address
%\curraddr{Department of Mathematics, Ohio State 
 %University, Columbus, Ohio 43210} 
 \email{shi@math.osu.edu}
% \thanks will become a 1st page footnote.
%\thanks{The first author was supported in part by NSF Grant \#000000.}

% General info
\subjclass[2000]{Primary 22E40; Secondary 28D20, 11J83}

\date{}

%\dedicatory{This paper is dedicated to my advisor.}

\keywords{Ergodic theory, entropy, Dirichlet's theorem}

\begin{abstract}
\noindent We consider improvements of Dirichlet's Theorem  on space of
matrices $M_{m,n}(\mathbb R) $. It is shown that  for a
certain class of fractals  $K\subset [0,1]^{mn}\subset M_{m,n}(\mathbb
R)$ of local maximal dimension Dirichlet's Theorem cannot be improved 
almost everywhere. This is shown using entropy and dynamics on homogeneous
spaces of Lie groups.

\end{abstract}

\maketitle

\markright{EQUIDISTRIBUTION OF EXPANDING MEASURES}

\section{Introduction}

\subsection{Dirichlet's theorem}
Let $m,n$ be positive integers and denote by $M_{m,n}=\mathbb R^{mn}$ the space $m\times n$ matrices with
real entries. Dirichlet's Theorem (hereafter abbreviated by DT)
on simultaneous diophantine approximations says the following:
\begin{description}
 \item [DT$(m,n)$] Given $Y\in M_{m,n}$ and $N\ge 1$, there exist $\mathbf{q}=
(q_1,\ldots, q_m)\in\mathbb Z^m \backslash \{0\}\subset M_{1,m}$ and $\mathbf{p} =(p_1,\ldots, p_n)\in \mathbb Z^n\subset M_{1,n}$ with
\[
 \|\mathbf q Y+\mathbf p\|\le\frac{1}{N^m}  \quad\mbox{and} \quad \|\mathbf
q\|\le
N^n.
\]
\end{description}
Here and hereafter, unless otherwise specified, $\|\cdot\| $ stands for the
sup norm on $\mathbb R^k$, i.e.\  $\|(x_1,\ldots, x_k)\|=\max_{1\le i\le k}|x_i|.$
We use
$B_s(x)$ (or $B_s$ if $x=0$) to denote the ball of radius $s$ centered at $x$ in this norm.

Given $Y$ as above and a positive number $\sigma<1$, we say DT can be $\sigma$-improved
 for $Y$, and write $Y\in \mathrm{DI}_\sigma(m,n)$ or
$Y\in DI_\sigma$ when the dimensions are clear from the context, if for every
 $N$ large enough one can find  $\mathbf{q}=
(q_1,\ldots, q_m)\in\mathbb Z^m \backslash \{0\}$ and $\mathbf{p} =(p_1,\ldots, p_n)\in
\mathbb Z^n$ with \begin{equation}\label{ineq}
           \|\mathbf q Y+\mathbf p\|\le \frac{\sigma}{N^m} \quad\mbox{and} \quad
\|\mathbf q\|\le \sigma N^n.                                                                         \end{equation}

We say that DT can be improved for $Y$  if
$Y\in DI_\sigma$ for some $0< \sigma <1$. The following theorem of  Davenport
and  Schmidt says that for most
$Y$  DT can not be improved.
\begin{theorem}[\cite{DS}]\label{zero}
 For any $m,n\in \mathbb N$ and positive number $\sigma <1$, the set
$\mathrm{DI}_\sigma (m,n)$ has Lebesgue measure zero.
\end{theorem}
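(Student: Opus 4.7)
The plan is to use the Dani correspondence to translate $Y\in\mathrm{DI}_\sigma$ into a dynamical condition on the space of unimodular lattices, and then to invoke equidistribution of expanding horospherical orbits. Let $G=\mathrm{SL}_{m+n}(\mathbb R)$, $\Gamma=\mathrm{SL}_{m+n}(\mathbb Z)$, $X=G/\Gamma$, and consider the diagonal one-parameter subgroup
\[
a_t=\mathrm{diag}(e^{-nt}I_m,\,e^{mt}I_n)\in G.
\]
For each $Y\in M_{m,n}$ define $g_Y\in G$ to be the block-unipotent matrix with lower-left block $^tY$; one checks $a_t g_Y a_{-t}=g_{e^{(m+n)t}Y}$, so $U=\{g_Y:Y\in M_{m,n}\}$ is the expanding horospherical subgroup of $a_t$. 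A direct computation with $N=e^t$ shows that the lattice $a_t g_Y\mathbb Z^{m+n}$ contains a nonzero vector of sup-norm $\le\sigma$ precisely when the inequalities (\ref{ineq}) admit a nonzero integer solution at scale $N$. Let $K_\sigma\subset X$ denote the open set of unimodular lattices all of whose nonzero vectors have sup-norm strictly greater than $\sigma$; for $\sigma<1$ this set contains $\mathbb Z^{m+n}$ and hence has positive Haar measure $\mu_X(K_\sigma)>0$. The condition $Y\in\mathrm{DI}_\sigma$ then becomes: the forward orbit $\{a_t g_Y\Gamma:t\ge t_0\}$ avoids $K_\sigma$ for some $t_0$.

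It therefore suffices to show that for Lebesgue-a.e.\ $Y\in M_{m,n}$ the orbit $\{a_t g_Y\Gamma\}$ returns to $K_\sigma$ for arbitrarily large $t$. For this I would invoke equidistribution of expanding horospherical translates: the standard thickening argument of Margulis, combined with mixing of $(a_t)$ on $(X,\mu_X)$ via Howe--Moore, yields, for every $f\in C_c(X)$ and every bounded open $Q\subset M_{m,n}$,
\[
\frac{1}{|Q|}\int_Q f(a_t g_Y\Gamma)\,dY\ \longrightarrow\ \int_X f\,d\mu_X\quad\text{as }t\to\infty.
\]
An $L^2$-maximal inequality, or a Borel--Cantelli argument along a geometric sequence of times, upgrades this expectation convergence to the a.e.\ pointwise statement that $\{a_t g_Y\Gamma\}$ equidistributes to $\mu_X$ for Lebesgue-a.e.\ $Y$. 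Choosing a nonnegative $f\in C_c(X)$ supported in $K_\sigma$ with $\int_X f\,d\mu_X>0$, one concludes that a.e.\ orbit spends a positive proportion of time in $K_\sigma$ and, in particular, cannot eventually avoid $K_\sigma$; hence $Y\notin\mathrm{DI}_\sigma$ for a.e.\ $Y$, proving the theorem.

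The main obstacle is the upgrade from expectation to a.e.\ pointwise equidistribution in the transverse parameter $Y$. The expectation convergence is a direct consequence of mixing via the thickening trick, but controlling null sets in $Y$ requires additional work, typically a maximal inequality or a quantitative decay-of-matrix-coefficients estimate. This step is by now classical and appears in the works of Dani, Margulis, and Kleinbock--Margulis; the present paper will likely recast such equidistribution in the language of entropy and expanding measures, which is more robust for the fractal generalization promised in the abstract.
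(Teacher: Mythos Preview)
The paper does not prove this theorem; it is cited from Davenport--Schmidt, with the remark that only the cases $m=1$ or $n=1$ are treated in \cite{DS} but the method there generalizes. So there is no proof in the paper to compare your proposal against directly.

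That said, the paper's own machinery (Theorems~\ref{wequi}, \ref{diophantine}, \ref{conclut}), when specialized to Lebesgue measure on $[0,1]^{mn}$, does yield an alternative dynamical proof, and it differs from your sketch in two respects. First, equidistribution of the Ces\`aro averages $\frac{1}{k}\sum_{l=0}^{k-1}T_*^l\mu$ is obtained via entropy: any weak-$*$ limit is shown to have entropy $\ge nt$, and the Margulis--Tomanov rigidity (Theorem~\ref{en-mea}) then forces it to be Haar. This is precisely the entropy recasting you anticipate, replacing your mixing/thickening step. Second---and this is the more instructive point given the obstacle you flag---the paper never upgrades to pointwise equidistribution in the parameter $Y$. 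Theorem~\ref{diophantine} uses only the measure-level convergence (\ref{econdit}) to extract a uniform density bound $\mu(\mathrm{DI}_\sigma\cap B_s(x))\le\tau\,\mu(B_s(x))$ with $\tau<1$ independent of the ball, after which a covering/Lebesgue-density argument gives $\mu(\mathrm{DI}_\sigma)=0$. This sidesteps your maximal-inequality/Borel--Cantelli step entirely and is the cleaner route even in the Lebesgue case; it is also what makes the argument portable to the fractal measures that are the paper's real target.
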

 In fact only the cases with $m=1$ or $n=1$ are proved in \cite{DS}. But the
method there can be generalized to the settings above. After \cite{DS}, there are
different strengthens and generalizations of Theorem \ref{zero}. There are detailed
reviews of the history of these developments in \cite{KW} and \cite{Sh}. In these two
papers, they  successfully strengthen Theorem \ref{zero} for the cases of $m=1$ or
$n=1$. In the case $m=1$, \cite{KW} showed that for a large class of measures  (e.g.\,friendly measures in \cite{KLW}) DT can not be $\sigma$-improved for almost every
element if $\sigma<\sigma_0$ for some positive number $\sigma_0$ depending on the
measure. After that,  Shah improved the result by removing the upper bound $\sigma_0$
for a special kind of measures concentrated on analytic curves.   More precisely,
\begin{theorem}[\cite{Sh}] \label{curve}
 Let $\varphi:[a,b]\to \mathbb R^k$ be an analytic curve such that
$\varphi([a,b])$
is not contained in a proper affine subspace. Then Dirichlet's theorem
DT$(1,k)$ and DT$(k,1)$ can not be improved for $\varphi(s)$ for almost all
$s\in [a,b]$.
\end{theorem}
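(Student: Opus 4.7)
The plan is to convert the statement into dynamics via the Dani correspondence and then apply an equidistribution theorem for translates of the curve in the homogeneous space $X=G/\Gamma$ with $G=SL_{k+1}(\mathbb R)$ and $\Gamma=SL_{k+1}(\mathbb Z)$. For DT$(1,k)$ one uses the diagonal flow $g_t=\mathrm{diag}(e^{-kt},e^{t},\ldots,e^{t})$ and the unipotent matrix $u_Y$ encoding $Y\in\mathbb R^k$ as the upper-right row; for DT$(k,1)$ one uses $g_t=\mathrm{diag}(e^{-t},\ldots,e^{-t},e^{kt})$ and the unipotent encoding $Y$ as the upper-right column. In each case Mahler's criterion produces an open set $K_\sigma\subset X$ of positive Haar measure (lattices of systole exceeding $\sigma$) such that $Y\in \mathrm{DI}_\sigma$ if and only if $g_tu_Y\Gamma\notin K_\sigma$ for all sufficiently large $t$. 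The task thus reduces to showing that for almost every $s\in[a,b]$ the orbit $t\mapsto g_tu_{\varphi(s)}\Gamma$ visits $K_\sigma$ at arbitrarily large times.

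The main dynamical input I would establish is: for every open subinterval $I\subset[a,b]$, the probability measures $\mu_t^I$ obtained by pushing forward the normalized Lebesgue measure on $I$ under the map $s\mapsto g_tu_{\varphi(s)}\Gamma$ converge weakly to the Haar measure $m_X$. The proof follows the Ratner-theoretic template. First, the Kleinbock--Margulis $(C,\alpha)$-good property of nondegenerate analytic maps gives tightness of $\{\mu_t^I\}$, so every subsequential weak limit is a probability measure on $X$. Second, the Mozes--Shah theorem, built on Ratner's measure classification, forces every such limit to be an average of algebraic measures supported on closed orbits of proper intermediate subgroups $H\subsetneq G$. Third, a linearization argument using exterior power representations of $G$ shows that the existence of such an $H$ would make $\varphi$ tangent, to infinite order by analyticity, to a proper $H$-invariant subvariety; tracing this back through the shape of $u_Y$ forces $\varphi([a,b])$ into a proper affine subspace of $\mathbb R^k$, contradicting the hypothesis. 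Hence the only possible limit is $m_X$.

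The theorem then follows from a density-point argument. Suppose $E:=\{s\in[a,b]:\varphi(s)\in\mathrm{DI}_\sigma\}$ had positive Lebesgue measure. Pick a density point $s_0\in E$ and an interval $I\ni s_0$ with $|E\cap I|/|I|>1-\tfrac12 m_X(K_\sigma)$. For each $s\in E\cap I$ there is $T(s)<\infty$ with $g_tu_{\varphi(s)}\Gamma\notin K_\sigma$ for all $t\geq T(s)$; truncating at $T_0$ large produces a measurable subset $A_{T_0}\subset E\cap I$ of measure arbitrarily close to $|E\cap I|$ on which the avoidance holds past time $T_0$. For $t\geq T_0$ this yields $\mu_t^I(K_\sigma)\leq \tfrac12 m_X(K_\sigma)+o(1)$, contradicting $\liminf_t\mu_t^I(K_\sigma)\geq m_X(K_\sigma)>0$, which is valid by weak convergence and the openness of $K_\sigma$. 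Therefore $E$ is Lebesgue-null.

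The main obstacle is the equidistribution step, specifically the exclusion of proper algebraic limits: non-divergence and Ratner's theorem are off the shelf, but converting the non-affine-degeneracy of $\varphi$ into the algebraic condition that forbids every proper intermediate $H$ requires careful bookkeeping of stabilizers of derivative vectors of $\varphi$ in various $G$-representations. For DT$(k,1)$ the difficulty is compounded because $u_Y$ sits in the contracting horosphere of $g_t$, so mixing cannot be invoked directly; one must either exploit a duality via an outer automorphism of $G$ to reduce to the expanding-horosphere case, or carry out the more delicate direct linearization analysis.
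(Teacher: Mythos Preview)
This theorem is not proved in the present paper at all: it is quoted as a result of Shah \cite{Sh} and serves only as motivation for the paper's own results on fractal measures. There is therefore no ``paper's own proof'' to compare against.

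That said, your outline is a reasonable sketch of Shah's actual argument in \cite{Sh}. The reduction via the Dani correspondence, the equidistribution of expanding translates of the curve to Haar measure, and the density-point deduction are all correct in spirit. Two points deserve sharpening. First, the exclusion of proper algebraic limits is not done via the Mozes--Shah theorem per se; Shah uses Ratner's measure classification together with the Dani--Margulis linearization technique directly, and the analytic nondegeneracy hypothesis is fed in through a careful analysis of when the curve can be trapped near singular sets of representations --- this is the technical heart of \cite{Sh} and is substantially more delicate than your paragraph suggests. Second, for DT$(k,1)$ the embedding $u_Y$ lies in the \emph{expanding} horospherical subgroup of the relevant diagonal flow (not the contracting one as you wrote), so no duality trick is needed; the same equidistribution theorem applies, and indeed Shah handles both cases uniformly. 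Your concern about the contracting case is based on a sign error in setting up the flow.
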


\subsection{Nonimprovability of DT for fractal measures}
Our aim is to generalize Theorem \ref{zero} in a direction in some sense
 opposite to
Theorem \ref{curve}. Instead of a smooth one-dimensional
submanifold, we are going to consider measures supported on a  full
Hausdorff dimension subset of $M_{m,n}$ and show that for $\mu$ almost every
point DT can not be improved.
Without loss of generality,
we are going to work with measures on $J=[0,1]^{k}\subset \mathbb R^k$.
Let $f$ and $g$ be real valued functions depending on $\epsilon$, then 
$f\ll_\epsilon g$ means $f\le C g$ for some constant $C>0$ depending only on $\epsilon$.

\begin{definition}\label{localIn}
Let $\mu$ be a probability measure on $J$.
 We say $\mu$ has local maximal dimension if there exists $s_0>0$ such that
 for any $\epsilon>0$, $0<\delta<1$, $0<s\le s_0$,
and $x\in J$ one has
\begin{equation}\label{elocmax}
 \mu(B_{\delta s}(x))\ll_\epsilon \delta^{k-\epsilon}
  \mu(B_{ s}(x)).
\end{equation}
We also say $\mu$ has $s_0$-local maximal dimension if $s_0$ is known.
\end{definition}
\begin{remark}
 (\ref{elocmax}) implies   $\mathrm{supp}(\mu)$ has Hausdorff dimension
$k$. 
\end{remark}

In Theorem \ref{conclut} we prove that  DT$(m,n)$ can not be improved almost everywhere if $\mu$  
implies some non-escape of mass property. In particular, we have:
\begin{theorem}\label{obtain}
 Let $\mu$ be a Borel probability measure on $[0,1]^{n}\subset M_{1,n}$ with
local maximal dimension. If $\mu$ is Federer $($see Section \ref{slf}$)$, then DT$(1,n)$ can not be 
improved for $\mu$ almost every element.
\end{theorem}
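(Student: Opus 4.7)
The plan is to reduce the theorem to Theorem \ref{conclut} by verifying its non-escape of mass hypothesis, combining the Dani correspondence with the quantitative non-divergence of Kleinbock--Margulis applied to Federer measures with local maximal dimension.

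First recall the dynamical translation. Put $\Omega = \mathrm{SL}_{n+1}(\mathbb{R})/\mathrm{SL}_{n+1}(\mathbb{Z})$, let $u(Y) \in \mathrm{SL}_{n+1}(\mathbb{R})$ denote the standard unipotent embedding of $Y \in \mathbb{R}^n$, and let $g_t$ be the one-parameter diagonal subgroup whose conjugation action on $u(\cdot)$ realizes the DT scaling $N = e^t$. The Dani correspondence identifies $Y \in \mathrm{DI}_\sigma(1,n)$ with the statement that the orbit $\bigl(g_t u(Y)\mathrm{SL}_{n+1}(\mathbb{Z})\bigr)_{t \ge 0}$ eventually avoids the compact set $K_\sigma := \{\Lambda \in \Omega : \Lambda \cap B_\sigma = \{0\}\}$. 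Hence Theorem \ref{obtain} amounts to showing, for $\mu$-a.e.\ $Y$ and every $\sigma > 0$, that $g_t u(Y)\mathrm{SL}_{n+1}(\mathbb{Z}) \in K_\sigma$ for arbitrarily large $t$. Theorem \ref{conclut} delivers exactly this via equidistribution of the pushforwards $(g_t)_* u_* \mu$ on $\Omega$, provided these pushforwards exhibit uniform non-escape of mass.

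The remaining --- and main --- step is to verify this non-escape of mass. The strategy is to show that the two hypotheses on $\mu$ (Federer and local maximal dimension) together imply $\mu$ is \emph{friendly} in the sense of Kleinbock--Lindenstrauss--Weiss \cite{KLW}, from which standard quantitative non-divergence machinery furnishes tightness uniformly in $t$. Federer is exactly the doubling hypothesis. For the absolutely decaying property, given an affine hyperplane $L \subset \mathbb{R}^n$ and a ball $B_s(x)$, the $\rho s$-tube of $L$ inside $B_s(x)$ is covered by $\asymp \rho^{-(n-1)}$ balls of radius $\rho s$, each of which carries $\mu$-mass $\ll_\eta \rho^{n-\eta}\mu(B_s(x))$ by (\ref{elocmax}); summing yields total mass $\ll_\eta \rho^{1-\eta}\mu(B_s(x))$, a decay of essentially optimal order. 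The Kleinbock--Margulis estimate then gives
\[
\sup_{t \ge 0}\, \mu\bigl(\{Y \in [0,1]^n : g_t u(Y)\mathrm{SL}_{n+1}(\mathbb{Z}) \notin K_\rho\}\bigr) \ll \rho^\alpha
\]
for some $\alpha > 0$, which is the required non-escape of mass.

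The main obstacle is matching the precise form of non-escape delivered by Kleinbock--Margulis to the hypothesis actually invoked by Theorem \ref{conclut}. Should Theorem \ref{conclut} demand a modified or scale-dependent variant (for instance, tailored to the specific equidistribution argument used there), some bookkeeping will be needed to reconcile the two statements; but the essential content --- friendliness of $\mu$ implies non-divergence of the $g_t u$-translates --- is robust. Everything else (the Dani correspondence, the derivation of friendliness from the two hypotheses, and the application of non-divergence) is standard once properly assembled.
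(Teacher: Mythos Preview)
Your proposal is correct and follows essentially the same route as the paper: show that Federer plus local maximal dimension imply absolute decay (your covering argument is precisely the content of Lemmas~\ref{lfl2}--\ref{lfl4} and Theorem~\ref{lft}), deduce friendliness, then invoke the Kleinbock--Lindenstrauss--Weiss non-divergence estimate (Theorem~\ref{tfrie}) to obtain non-escape of mass on average (Corollary~\ref{cfrie}), and finally apply Theorem~\ref{conclut}. The ``main obstacle'' you flag---matching the non-divergence output to the hypothesis of Theorem~\ref{conclut}---is resolved directly by Corollary~\ref{cfrie}, so no additional bookkeeping is needed.
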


\subsection{Example of fractal measures}
It is easy to see that the Lebesgue measure on $[0,1]^k$ has local maximal dimension and is Federer. 
Next we give an example (suggested by  Einsiedler) of a fractal measure on $[0,1]$ with the same property but singular to the Lebesgue measure.
First we divide $[0,1]$
 into $3$ subintervals of the same length $\frac{1}{3}$ and cut the middle open
interval out. We denote the remaining two closed subintervals by $[1]$ and
$[2]$ with the natural ordering from left to right. Next we divide these two intervals 
into $5$ subintervals of the same length $\frac{1}{3\cdot 5}$ and
cut the middle interval out.  We denote the
remaining closed intervals inside $[1]$ by $[1,1],[1,2],[1,3],[1,4]$ with the
left to right ordering. We denote the remaining closed intervals inside $[2]$ in a
similar way. In this construction, we allow some overlappings of end points so that all the remaining intervals are closed.

This process is continued for all natural numbers $n$.
That is after $n$-th step we have 
\begin{equation}\label{numnst}
2\cdot 4\cdots(2n)
\end{equation}
   intervals
 which have the same length 
\begin{equation}\label{lennst}
\frac{1}{3}\cdots\frac{1}{2n+1}.
\end{equation}
Each of them is denoted by %\begin{equation}\label{torpr1}
$[y_1,\ldots ,y_{n}]$
%\end{equation}
 where $1\le y_i\le  2i$. Such a closed interval is said to be of stage $n$.
Then we cut all  of them
 into $2n+3$ subintervals of the
same length and take the middle
open interval out. For the  stage $n$ interval  $[y_1,\ldots
,y_{n}]$, we denote the remaining $2n+2$ subintervals by $[y_1,\ldots
,y_{n}, y_{n+1}]$ with the left to right ordering where $0\le y_{n+1}\le 2n+2$. 
See figure \ref{nstage} for the process of dividing a stage $n$ subinterval.

\begin{figure}
\center{
 \includegraphics{fig.1}
}
\caption{Stage n interval $[y_1,\ldots,y_n]$ }
\label{nstage}
\end{figure}
We use  $C_n$ to denote the union  of all stage $n$ subintervals. Let
$C=\bigcap_{n}C_n$, then in view of (\ref{numnst}) and (\ref{lennst})
we have
\begin{equation}\label{clmzer}
 m(C)=\lim_{n\to \infty}m(C_n)=\lim_{n\to \infty} 
\frac{2}{3}\cdots\frac{2n}{2n+1}=0
\end{equation}
where $m$ is the Lebesgue measure.
The last equality of (\ref{clmzer}) follows from 
\[
 \left (\frac{2}{3}\cdots\frac{2n}{2n+1}\right)^2\le
\left (\frac{2}{3}\cdots\frac{2n}{2n+1}\right)
\left (\frac{3}{4}\cdots\frac{2n+1}{2n+2}\right)
=\frac{2}{2n+2}\to 0.
\]
One can define a measure $\mu$  on $C\subset [0,1]$ by assigning 
\[
 \mu([y_1,\ldots, y_n])=\frac{1}{2}\cdots\frac{1}{2n}.
\]

\begin{proposition}\label{1.1}
 Let $\mu$ on $ [0,1]$ be the probability measure  above, then
$\mu$ has local maximal dimension  and  is Federer.
\end{proposition}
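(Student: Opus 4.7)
The plan is to deduce both conclusions from a single two-sided estimate on $\mu(B_s(x))$ for $x\in C$. Write $\ell_n=1/\bigl(3\cdot 5\cdots(2n+1)\bigr)$ and $\mu_n=1/\bigl(2\cdot 4\cdots(2n)\bigr)$ for the length and $\mu$-mass of a stage-$n$ interval, so that $\ell_n/\ell_{n+1}=2n+3$ and $\mu_n/\mu_{n+1}=2n+2$, and for $s<1$ let $n=n(s)$ be the integer with $\ell_{n+1}\le s<\ell_n$. The first step I would carry out is to prove, with absolute constants,
\[
\mu(B_s(x))\asymp \frac{s}{\ell_{n+1}}\,\mu_{n+1}.
\]
The lower bound rests on $I_{n+1}(x)\subset B_s(x)$ (since $|I_{n+1}(x)|=\ell_{n+1}\le s$), together with the fact that the stage-$(n+1)$ intervals inside the parent $I_n(x)$ form two abutting blocks of $n+1$ intervals of length $\ell_{n+1}$ separated by a single gap of the same size, so that $\Omega(s/\ell_{n+1})$ full stage-$(n+1)$ intervals fit inside $B_s(x)$; the matching upper bound counts stage-$(n+1)$ intervals merely meeting $B_s(x)$, of which there are $O(s/\ell_{n+1})$. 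The case $x\in[0,1]\setminus C$ is reduced to the above by replacing $x$ with its nearest point of $C$, losing only a factor of two in the radii.

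From this estimate the Federer property is immediate: doubling $s$ either leaves $n(s)$ unchanged (so $\mu(B_{2s}(x))/\mu(B_s(x))\asymp 2$) or decreases it by one, in which case
\[
\mu(B_{2s}(x))/\mu(B_s(x))\asymp 2\cdot\frac{\ell_{n+1}}{\ell_n}\cdot\frac{\mu_n}{\mu_{n+1}}=\frac{2(2n+2)}{2n+3}<2,
\]
uniformly in $n$.

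For local maximal dimension I fix $\epsilon,\delta\in(0,1)$ and put $m=n(\delta s)\ge n(s)=n$. Telescoping the same two-sided estimate gives
\[
\frac{\mu(B_{\delta s}(x))}{\mu(B_s(x))}\asymp \delta\cdot\prod_{j=n+1}^{m}\frac{2j+3}{2j+2},
\]
so \eqref{elocmax} (with $k=1$) reduces to $\prod_{j=n+1}^{m}\frac{2j+3}{2j+2}\le C_\epsilon\,\delta^{-\epsilon}$. Bounding $\log(1+\tfrac{1}{2j+2})\le\tfrac{1}{2j+2}$ and applying the integral test gives a product bound of $\sqrt{(2m+3)/(2n+3)}$, while $\delta\le \ell_m/\ell_{n+1}$ forces $\delta^{-\epsilon}\ge \prod_{j=n+2}^{m}(2j+1)^\epsilon\ge (2n+5)^{\epsilon(m-n-1)}$ once $m\ge n+2$. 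The main obstacle---and the one honest calculation---is then $\sqrt{(2m+3)/(2n+3)}\le C_\epsilon\,(2n+5)^{\epsilon(m-n-1)}$ uniformly in $n$ and $m$: choosing $N_\epsilon$ with $\epsilon\log(2N_\epsilon+5)\ge 1$ makes this inequality hold with constant $1$ for $n\ge N_\epsilon$, and in the remaining regime $n<N_\epsilon$ one absorbs the finite range of moderate $m$ into $C_\epsilon$ and invokes the exponential-versus-square-root comparison for $m\to\infty$. The rest is bookkeeping.
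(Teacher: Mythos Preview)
The paper omits this proof entirely, deferring to Section~4.1 of the author's thesis, so there is nothing in the paper itself to compare against. Your argument is correct and is the natural one: the single two-sided estimate $\mu(B_s(x))\asymp (s/\ell_{n(s)+1})\,\mu_{n(s)+1}$ for $x\in C$ is exactly the right organizing principle, and both the doubling (Federer) property and local maximal dimension fall out of it as you describe.

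Two small points worth making explicit when you write it up. First, in the lower bound of the two-sided estimate, $B_s(x)$ may spill outside $I_n(x)$, but the portion $B_s(x)\cap I_n(x)$ always has length at least $s$ (since $|I_n(x)|=\ell_n>s$ and $x\in I_n(x)$), and that alone suffices to harvest $\Omega(s/\ell_{n+1})$ stage-$(n+1)$ subintervals from the two blocks inside $I_n(x)$; this sidesteps any case analysis about neighbouring stage-$n$ intervals or larger gaps. Second, your reduction for $x\notin C$ is precise as stated: if $\mu(B_{\delta s}(x))>0$ then the nearest point $x'\in C$ has $|x-x'|<\delta s$, whence (for $\delta\le 1/2$) $B_{\delta s}(x)\subset B_{2\delta s}(x')$ and $B_{s/2}(x')\subset B_s(x)$, so each radius changes by at most a factor of two; the range $\delta>1/2$ is trivial. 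The endgame comparison $\sqrt{(2m+3)/(2n+3)}\le C_\epsilon\,(2n+5)^{\epsilon(m-n-1)}$ via the split $n\ge N_\epsilon$ / $n<N_\epsilon$ is sound.
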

We omit the proof here, the reader can consult Section 4.1 of the author's thesis \cite{Shi}  for a proof. Many other examples can be constructed in a similar way.
 It is easy to see that local maximal 
dimension is invariant under products. It is mentioned in \cite{KLW} that
Federer is invariant under products, too. So we may see many  examples of
measures on $[0,1]^k$ with local maximal dimension, or in addition Federer and
singular to the Lebesgue measure. 

\subsection{Method of proof}
We are going to translate the diophantine properties  to properties of trajectories for the action of a diagonal matrix on the  homogeneous
space $X=SL(m+n, \mathbb Z)\backslash SL(m+n,\mathbb R)$ in Section
\ref{conclu}. This method is developed in \cite{Da} and \cite{KM} and then was used also  in  \cite{KLW}, \cite{KW} and \cite{Sh} for various kinds of problems.

Our diophantine approximation result follows from an equidistribution result
in Section \ref{maineq}. We put a measure of local maximal dimension on $[0,1]^{mn}$ in the unstable submanifold of $X$. We denote the new measure
by $\nu$ and translate the  property of $\mu$ into the homogeneous setting where we say $\nu$ has  local maximal dimension in the unstable horospherical direction.
We  prove that the average of $\nu$ along the orbit is equidistributed with  respect to the Haar measure $m_X$ if there is no loss of mass.

We will use the entropy theory developed by  Margulis and  Tomanov
 in \cite{MT94}  to prove the equidistribution result. They proved that  the
measure on $X$ of maximal entropy  under diagonal actions is precisely the
Haar measure $m_X$ and the maximal entropy can be computed according to the
 entries of the diagonal matrix. This method will be reviewed  in Section
\ref{cdact}.

To use the entropy theory, we need to show that the average of $\nu$ along the orbit has no loss of mass. In general we do not know whether
this is true  since $X$ is noncompact.   Einsiedler and  Kadyrov are
working on this question under weaker assumptions and have obtained some positive results on special cases.
If $m=1$, we can also use Theorem 3.3 of \cite{KLW} to establish the non-escape of mass property.
 In
Section \ref{slf}, we show that  local maximal dimension and Federer imply absolutely decaying, hence friendly. Therefore with an additional Federer assumption, we get non-escape of mass property and the corresponding diophantine approximation result.

\textbf{Acknowledgements:}
The author would like to thank his advisor Manfred Einsiedler for
his help in preparing this paper and his advice on how
to write articles.

\section{Preliminaries}
We fix a locally compact topological space
$X$ and a continuous map $T:X\to X$. Let $\mathcal B $ stand for the Borel $\sigma$-algebra of $X$. We assume all measures on $X$ are Radon and  
the convergence of  measures  is under the weak$^*$ topology.
\subsection{Equidistribution and non-escape of mass}
A sequence of probability measures $\mu_n$ on $X$  is said to 
be equidistributed with respect to a probability measure $\lambda $, if
\begin{equation}\label{limequ}
 \lim_n\mu_n=\lambda.
\end{equation}
\begin{definition}\label{eqyuav}
Let $\mu$ and $ \lambda$ are probability measures on $X$. We say that $\mu$ is \emph{equidistributed on average}
with respect to $\lambda$ if the sequence
\begin{equation}\label{kavemu}
  \mu_k=\frac{1}{k}\sum_{l=0}^{k-1}T_\ast^l\mu 
\end{equation}
is equidistributed in the sense of $(\ref{limequ})$.
\end{definition}
It is well known that any limit measure  of the sequence $(\ref{kavemu})$ is $T$-invariant.
The following lemma tells us how to compute the value of the limit measure on 
a good Borel set.
\begin{lemma}\label{limit}
 Suppose $\mu_n$ $(n\ge 1)$ and $\mu$ are probability measures on $X$ and $B\in
\mathcal B$ is relatively compact. If $\mu(\partial B) =0$ and $\mu_n\to \mu$,
then
$\mu_n(B)\to \mu(B)$.
\end{lemma}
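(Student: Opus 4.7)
The statement is the familiar Portmanteau-style fact for Radon measures on a locally compact space; the hypothesis $\mu(\partial B)=0$ together with relative compactness of $B$ is exactly what makes $B$ a ``continuity set.'' My plan is to sandwich $\mathbf 1_B$ between two compactly supported continuous functions to which the weak$^*$ hypothesis can be applied directly.

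First I record the immediate consequence of $\mu(\partial B)=0$: since $\bar B = B^\circ \sqcup \partial B$, we have $\mu(B^\circ)=\mu(\bar B)=\mu(B)$. Note that $\bar B$ is compact by hypothesis, and since $X$ is locally compact we may fix, once and for all, a relatively compact open neighborhood $W$ of $\bar B$ to house all the auxiliary functions below.

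For the upper bound, fix $\epsilon>0$. Outer regularity of the Radon measure $\mu$, applied to the compact set $\bar B$, produces an open set $U$ with $\bar B\subset U\subset W$ and $\mu(U)\le \mu(\bar B)+\epsilon=\mu(B)+\epsilon$. By Urysohn's lemma (for locally compact Hausdorff spaces) pick $f\in C_c(X)$ with $\mathbf 1_{\bar B}\le f\le \mathbf 1_U$ and $\mathrm{supp}(f)\subset \bar W$. Then
\[
\mu_n(B)\;\le\;\int f\,d\mu_n\;\longrightarrow\;\int f\,d\mu\;\le\;\mu(U)\;\le\;\mu(B)+\epsilon,
\]
so $\limsup_n \mu_n(B)\le \mu(B)+\epsilon$; letting $\epsilon\to 0$ gives $\limsup_n \mu_n(B)\le \mu(B)$.

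For the lower bound, by inner regularity choose a compact set $K\subset B^\circ$ with $\mu(K)\ge \mu(B^\circ)-\epsilon=\mu(B)-\epsilon$, and use Urysohn to get $g\in C_c(X)$ with $\mathbf 1_K\le g\le \mathbf 1_{B^\circ}$. Then
\[
\mu_n(B)\;\ge\;\int g\,d\mu_n\;\longrightarrow\;\int g\,d\mu\;\ge\;\mu(K)\;\ge\;\mu(B)-\epsilon,
\]
and $\liminf_n \mu_n(B)\ge \mu(B)$ as $\epsilon\to 0$. Combining the two inequalities yields $\mu_n(B)\to\mu(B)$.

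Everything here is standard; the only place one must be mildly careful is step two, where one needs to secure compact support for the Urysohn function before invoking weak$^*$ convergence. The relative compactness of $B$ furnishes the compact set $\bar B$ on which one performs outer regularity, and local compactness of $X$ then allows one to squeeze $\bar B$ inside a relatively compact open set $W$, which is the only obstacle worth naming.
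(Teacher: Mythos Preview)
Your proof is correct; it is the standard Portmanteau argument, carried out carefully with Urysohn functions and the regularity of Radon measures. The paper itself states this lemma without proof, treating it as a well-known fact, so your write-up simply fills in the omitted details in the expected way.
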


\begin{definition}\label{nlmass}
For a probability measure $\mu$ on $X$, we say there is no loss of  mass (or non-escape of mass) on
average if for any limit point $\nu$ of the sequence
\[
 \frac{1}{k}\sum_{l=0}^{k-1}T_\ast^l\mu,
 \]
one has $\nu(X)=1$.
\end{definition}

\begin{lemma}\label{noesma}
 Let $\mu_i$ $(i=1,2)$ be probability measures on $X$ and $\mu=c\mu_1+
(1-c)\mu_2$ for some $0<c<1$. If $\mu$ has no loss of mass on average 
then $\mu_i$ $(1\le i\le 2)$ has no loss of mass on average.
\end{lemma}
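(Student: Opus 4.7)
The plan is to argue by contradiction using the linearity of pushforwards and the sequential compactness of subprobability measures in the weak$^*$ topology. The core observation is that averaging commutes with taking convex combinations:
\begin{equation*}
\frac{1}{k}\sum_{l=0}^{k-1} T_*^l \mu \;=\; c \cdot \frac{1}{k}\sum_{l=0}^{k-1} T_*^l \mu_1 \;+\; (1-c)\cdot \frac{1}{k}\sum_{l=0}^{k-1} T_*^l \mu_2,
\end{equation*}
so any simultaneous limit point of the right-hand averages produces a limit point of the left-hand averages via the same convex combination.

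Concretely, I would suppose toward a contradiction that some $\mu_i$, say $\mu_1$, loses mass on average. Then by definition there exist a subsequence $(k_j)$ and a limit $\nu_1$ of $\frac{1}{k_j}\sum_{l=0}^{k_j-1} T_*^l \mu_1$ with $\nu_1(X)<1$. Since $X$ is locally compact, the unit ball in the space of Radon measures is sequentially compact in the weak$^*$ topology, so after passing to a further subsequence (which I still denote by $(k_j)$) the averages $\frac{1}{k_j}\sum_{l=0}^{k_j-1} T_*^l \mu_2$ converge to some subprobability measure $\nu_2$ with $\nu_2(X)\le 1$.

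Combining the two limits along this common subsequence yields
\begin{equation*}
\frac{1}{k_j}\sum_{l=0}^{k_j-1} T_*^l \mu \;\longrightarrow\; c\nu_1 + (1-c)\nu_2,
\end{equation*}
and evaluating total mass gives $c\nu_1(X)+(1-c)\nu_2(X) < c + (1-c) = 1$. This contradicts the no-escape-of-mass hypothesis on $\mu$. The symmetric argument handles $\mu_2$. The only nontrivial ingredient is the weak$^*$ sequential compactness of subprobability Radon measures on a locally compact space, which is a standard consequence of Banach--Alaoglu together with the separability conventions in force; since the paper's setting ensures these, there is no real obstacle, and the lemma reduces to this short bookkeeping argument.
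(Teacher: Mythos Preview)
Your argument is correct and is exactly the natural one: linearity of the averaging operator together with weak$^*$ sequential compactness of subprobability measures on the locally compact second countable space $X$. The paper itself states this lemma without proof, treating it as elementary; your write-up supplies precisely the details one would expect.
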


\subsection{Entropy}
Next we we review the definition of entropy. More details can be found in \cite{EW07} and \cite{Wa}.
Let $\mathcal P\subset \mathcal B$ be a finite or countable partition of $X$ by
Borel
measurable subsets, then the
$entropy$ of $\mathcal P$ is
\[
 H_\mu(\mathcal P)=\sum_{P\in\mathcal P}\mu(P)(-\log \mu(P)).
\]
Let $\mathcal Q$ be another partition. Then the common refinement of $\mathcal P
$ and $\mathcal Q$ is denoted by
\[
 \mathcal P\vee \mathcal Q=\{P\cap Q\ne \emptyset: P\in\mathcal P,Q\in \mathcal
Q\}.
\]
The common refinement of finite  collection of partitions is
defined similarly. 
We use $T^{-1}({\mathcal P})$ to denote the partition of $X$ consisting subsets
of the form
$T^{-1}(P)$ for $P\in\mathcal P$.
\begin{definition}\label{dentro}
 Let $(X,\mathcal B, \mu, T)$ be a measure preserving system and let $\mathcal
P$ be a partition of $X$ with finite entropy, then the entropy of $T$ with
respect to $\mathcal P$ is
\begin{equation}\label{dentro1}
 h_\mu(T,\mathcal P)=\lim_{n\to \infty}
\frac{1}{n}H_\mu \left(\bigvee_{i=0}^{n-1}T^{-i}\mathcal P\right)
\end{equation}
The entropy of $T$ is
\begin{equation}\label{dentro2}
 h_\mu(T)=\sup_{\mathcal P\-:\- H_\mu(\mathcal P)<\infty} h_\mu(T,\mathcal P).
\end{equation}
\end{definition}

\section{Friendly measure and non-escape of mass}\label{slf}
\subsection{Non-escape of mass}
Friendly measure is defined in \cite{KLW}, so let us  review some concepts in that paper.
In this section the norm on $\mathbb R^n$ is  $\|\cdot\|_E$ which is  induced from the standard inner product of $\mathbb R^n$.
For $x\in \mathbb R^n$ and $r>0$, $B(x,r)$ stands for the  open ball of radius $r$ centered at $x$ under $\|\cdot\|_E$. For an affine hyperplane $\mathcal L
\subset \mathbb R^n$, we denote by $d_{\mathcal L}(x)$ the distance from $x$ 
to $\mathcal L$. By $\mathcal L^{(\epsilon)}$ we denote the $\epsilon$-neighborhood of $\mathcal L$, that is the set 
\begin{equation}\label{frie0}
 \mathcal L^{(\epsilon)}
\stackrel{\mathrm{def}}{=}\{x\in \mathbb R^n: d_{\mathcal L}(x)<\epsilon\}.
\end{equation}

Let $\mu$ be a Radon measure on $\mathbb R^n$ and $U$ be an open subset. We say
$\mu$ is \emph{Federer} on $U$ if there exists $c,\beta>0$ such that for all 
$x\in \mathrm{supp}(\mu)\cap U$ and every  $0<\delta \le s$  with $B(x,s)\subset U$ one has 
\begin{equation}\label{frie1}
 \mu(B(x,\delta ))\ge c\left(\frac{\delta}{s}\right)^\beta  \mu(B(x,s)).
\end{equation}
We will say that $\mu$  is \emph{Federer} if for $\mu$-a.e. $x\in X$, there exist
a neighborhood $U$ of $x$ such that $\mu$ is Federer on $U$.

Let $C,\alpha>0$ and $U$ be an open subset of $\mathbb R^n$. We say $\mu$ is 
absolutely $(C,\alpha)$-decaying on $U$ if for any non-empty open ball $B=
B(z,r)
\subset U$ with $z\in \mathrm{supp}(\mu)$, any affine hyperplane $\mathcal L\subset \mathbb R^n$ and any $\epsilon >0$ one has 
\begin{equation}\label{frie2}
 \mu(B\cap \mathcal L ^{(\epsilon)})\le C\left(\frac{\epsilon}{r}\right)^\alpha
\mu(B).
\end{equation}
We will say $\mu$ is absolutely decaying if for $\mu$-a.e. $y_0\in \mathbb R^n$, there exist
a neighborhood $U$ of $y_0$ and $C,\alpha>0$ such that $\mu$ is absolutely $(C,\alpha)$-decaying on $U$.

Friendly measure in \cite{KLW} is defined as Federer, nonplanar and decaying. The
measures interested to us are absolutely decaying which   implies nonplanar and decaying.

The non-escape of mass is related to Theorem 3.3 of \cite{KLW}. The homogeneous space is a special case of Section \ref{conclu}. Here $n>0$,  $G=SL_{n+1}(\mathbb R)$, $\Gamma=SL_{n+1}(\mathbb Z)$ and 
$X=\Gamma\backslash G$. Let $t>0$ and
\begin{equation}\label{frie3}
 a=\mathrm{diag}(e^{t},\cdots, e^{t}, e^{-nt})\in G.
\end{equation}
The dynamical system  is $T=T_a:X\to X$ which sends $x\in X$ to $xa^{-1}$.
We define the following maps from $\mathbb R^n$ to $G$ and $X$:
\[\phi(y)\stackrel{\mathrm{def}}{=}\left(
 \begin{array}{cc}
 I_n & 0 \\
y & 1
\end{array}
\right),\quad 
\tau(y)\stackrel{\mathrm{def}}{=}\Gamma \phi(y).
\]
Recall that $X$ can be identified with the space $\Omega$ of unimodular lattices of $\mathbb R^{n+1}$. For $\epsilon >0$, we define \begin{equation}\label{frie4}
 F_\epsilon\stackrel{\mathrm{def}}{=}\{\Delta \in \Omega: \|v\|_E\ge \epsilon\quad \forall\  v\in \Delta 
\backslash
\{0\}\},
 \end{equation}
i.e., $F_\epsilon$ is the collection of all unimodular lattices in $\mathbb R^{n+1}$ which contain no nonzero vector smaller than $\epsilon$. It is easy to see that 
%each $F_\epsilon$ is compact and 
$\{F_\epsilon\}_{\epsilon>0}$ is an exhaustion of $X$. With these preparations, we can state Theorem 3.3 of \cite{KLW} as follows:

\begin{theorem}\label{tfrie}
 Suppose $\mu$ is a friendly measure on $\mathbb R^n$ and $a$ as in
(\ref{frie3}). Then for $\mu$-almost 
every  $y_0\in \mathbb R^n$, there is a ball $B$ centered at $y_0$ and $\widetilde
C, \alpha>0$ such that for any $l\in \mathbb Z_{\ge 0}$ and $\epsilon>0$,
\begin{equation}\label{frie5}
 \mu(\{y\in B:\tau(y)a^{-l}\not\in F_\epsilon\})\le \widetilde C \epsilon^\alpha.
\end{equation}
\end{theorem}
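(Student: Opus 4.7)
The statement is Theorem 3.3 of \cite{KLW}; the plan is to sketch the Kleinbock--Margulis-style argument used there. The first step is to invoke Mahler's compactness criterion, which shows that $\tau(y)a^{-l}\notin F_\epsilon$ iff the lattice $\mathbb{Z}^{n+1}\phi(y)a^{-l}$ contains a nonzero vector of Euclidean norm less than $\epsilon$. The core of the Kleinbock--Margulis scheme is to simultaneously control the shortest Pl\"ucker vector $v_L(y)$ of every primitive rational submodule $L\subseteq \mathbb{Z}^{n+1}$ under the flow; the short-vector case is the case $\mathrm{rank}(L)=1$. Each coordinate of $v_L(y)$ is a polynomial in $y$ of degree at most $\mathrm{rank}(L)$, with coefficients that depend explicitly on $l$ and on the integer data defining $L$.

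The friendly hypotheses now deliver the $(C,\alpha)$-good property for these polynomials on a suitable ball $B$ around $\mu$-a.e.\ $y_0$. The absolutely decaying assumption directly gives
\[
\mu\bigl(\{y\in B': |f(y)|<\rho\}\bigr)\le C\Bigl(\frac{\rho}{\sup_{B'}|f|}\Bigr)^{\alpha}\mu(B')
\]
for any affine function $f$ and any sub-ball $B'\subseteq B$, by recognising the sublevel set of $f$ as an $\epsilon$-neighbourhood of a hyperplane. An exterior-algebra/product argument extends the good property to the polynomial coordinates of $v_L(y)$, and the Federer condition supplies the doubling needed to pass between sub-ball and ball estimates. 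Feeding these ingredients into the Kleinbock--Margulis covering and induction then yields a bound of the form $\widetilde C\epsilon^\alpha$, uniform in $l$, for the $\mu$-measure of the bad set.

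The main technical obstacle is precisely this uniformity in $l$: as $l$ grows, exponentially many primitive submodules can potentially produce short Pl\"ucker vectors, and $a^{-l}$ both contracts and expands depending on the direction. The key miracle of the Kleinbock--Margulis induction is that, combined with the lower bound $\sup_{B}\|v_L\|\ge\rho_0$ provided by the integrality/primitivity of $L$ and the block structure of $a^{-l}$, the $(C,\alpha)$-good estimates sum across submodules to give an $l$-independent total. The restriction to $\mu$-almost every $y_0$ is what allows the choice of $B$ to lie inside a single neighbourhood where $\mu$ is simultaneously absolutely decaying and Federer with explicit constants, so that all estimates localise to $B$.
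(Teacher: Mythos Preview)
The paper does not prove this theorem at all: it is simply quoted as Theorem~3.3 of \cite{KLW}, and the paper uses it as a black box to derive Corollary~\ref{cfrie}. So there is no ``paper's own proof'' to compare your sketch against.

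That said, your outline is a fair summary of the argument in \cite{KLW}, which does indeed run the Kleinbock--Margulis quantitative nondivergence machinery: one shows the coordinate functions of the Pl\"ucker vectors $v_L(y)a^{-l}$ are $(C,\alpha)$-good with respect to $\mu$ on a suitable ball, checks a uniform lower bound on $\sup_B\|v_L(\cdot)a^{-l}\|$ coming from integrality and the block structure of $a^{-l}$, and then feeds both into the covering/induction theorem to get an $l$-independent bound $\widetilde C\epsilon^\alpha$. Two small corrections: the first step is not Mahler's criterion but just the definition of $F_\epsilon$; and the hypothesis in the theorem is \emph{friendly} (Federer, nonplanar, and decaying), not \emph{absolutely} decaying, so the $(C,\alpha)$-good property for linear functions is obtained in \cite{KLW} from the decaying condition relative to the local supremum together with nonplanarity, rather than directly from absolute decay as you wrote.
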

 Now let us fix a probability measure $\mu$ on $\mathbb R^n$ and assume it is 
friendly. We can cover supp$(\mu)$ by countably many open balls such that 
Theorem \ref{tfrie} holds. Therefore 
given a positive number  $\delta$ (close to $0$), there exist balls $B_1, \ldots, B_m$ such that Theorem \ref{tfrie} holds for all of them with the same $\widetilde
C, \alpha$ and  $\mu(\cup B_i)\ge 1-\delta$. So for any integer $l\ge 0$ and any $\epsilon >0$,
\begin{eqnarray}
  & &\mu(\{y\in \mathbb R^n:\tau(y)a^{-l}\not\in F_\epsilon\})\notag \\
&\le& \delta+\sum_{i=1}^m\mu(\{y\in B_i:\tau(y)a^{-l}\not\in F_\epsilon\}) 
\le \delta+m\widetilde C\epsilon ^{\alpha}\label{frie6}
\end{eqnarray}
This allows us to prove the following non-escape of mass result:
\begin{corollary}\label{cfrie}
 Let $\mu$ be a probability measure on $\mathbb R^n$ and $\tau,a$ as above. If
$\mu$ is friendly, then 
$\nu=\tau_*\mu$ has no loss of mass on average with respect to $T=T_a$.
\end{corollary}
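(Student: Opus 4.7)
The plan is to convert the friendliness input, in the form of the uniform estimate (\ref{frie6}), into control on how much mass the pushforwards $T^l_\ast \nu$ place outside the compact sets $F_\epsilon$, then average and pass to a weak$^\ast$ limit.

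First I would translate (\ref{frie6}) into the dynamical statement. Since $T^l(\tau(y)) = \tau(y)a^{-l}$, for any $l \geq 0$ and $\epsilon > 0$ we have
\[
T^l_\ast \nu(X \setminus F_\epsilon) \;=\; \mu\bigl(\{y \in \mathbb{R}^n : \tau(y) a^{-l} \notin F_\epsilon\}\bigr) \;\leq\; \delta + m \widetilde{C} \epsilon^\alpha.
\]
Averaging over $l = 0, \ldots, k-1$ yields the same upper bound for $\mu_k(X \setminus F_\epsilon)$, i.e.
\[
\mu_k(F_\epsilon) \;\geq\; 1 - \delta - m \widetilde{C} \epsilon^\alpha
\]
for every $k \geq 1$, where $m$ and $\widetilde{C}$ depend on $\delta$ but are independent of $k$, $l$, and $\epsilon$.

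Next, let $\widetilde{\nu}$ be any weak$^\ast$ limit of a subsequence $\mu_{k_j}$. Because $\{F_\epsilon\}$ is an exhaustion of $X$ by compact sets (Mahler's criterion), I can choose, by Urysohn, a function $f_\epsilon \in C_c(X)$ with $0 \leq f_\epsilon \leq 1$ and $f_\epsilon \equiv 1$ on $F_\epsilon$. Weak$^\ast$ convergence then gives
\[
\widetilde{\nu}(X) \;\geq\; \int f_\epsilon \, d\widetilde{\nu} \;=\; \lim_{j} \int f_\epsilon \, d\mu_{k_j} \;\geq\; \liminf_{j} \mu_{k_j}(F_\epsilon) \;\geq\; 1 - \delta - m\widetilde{C}\epsilon^\alpha.
\]

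Finally I send $\epsilon \to 0$ with $\delta$ (hence $m$ and $\widetilde{C}$) held fixed, obtaining $\widetilde{\nu}(X) \geq 1 - \delta$, and then send $\delta \to 0$ to get $\widetilde{\nu}(X) \geq 1$. Since $\widetilde{\nu}$ is a weak$^\ast$ limit of probability measures on a locally compact space it is automatically a sub-probability measure, so $\widetilde{\nu}(X) = 1$, which is exactly the non-escape of mass on average for $\nu$ under $T$. The only subtlety is the order of limits: $m$ depends on $\delta$, which is why it is essential to take $\epsilon \to 0$ first before touching $\delta$; there is no genuine analytic obstacle here, since the hard work has been absorbed into Theorem \ref{tfrie} and the covering argument leading to (\ref{frie6}).
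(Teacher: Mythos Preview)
Your proof is correct and follows essentially the same route as the paper: both convert (\ref{frie6}) into a uniform lower bound on $\mu_k(F_\epsilon)$, pass to the weak$^\ast$ limit, and then let $\epsilon\to 0$ followed by $\delta\to 0$. The only cosmetic difference is that the paper invokes Lemma~\ref{limit} on $F_\epsilon$ after arranging $\eta(\partial F_\epsilon)=0$, whereas you use an Urysohn cutoff $f_\epsilon$; these are interchangeable standard devices for the same limiting step.
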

\begin{proof}
 Let $\eta$ be a limit point of the sequence $\frac{1}{k}\sum_{l=0}^{k-1}T_*^l\nu$. 
Without loss of generality we may assume $\eta=\lim_{k\to \infty} 
\frac{1}{k}\sum_{l=0}^{k-1}T_*^l\nu$.

Given $\epsilon >0$, we want to compute $\eta(F_\epsilon)$.  It is easy to see that if $\epsilon_1<\epsilon$, then
$F_\epsilon$ is contained in the interior of $F_{\epsilon_1}$. Therefore we may
assume $\eta(\partial F_\epsilon)=0$. 
$F_\epsilon$ is relatively compact by 
Mahler's criterion (\cite{Ra} Chapter 10).
According to 
Lemma \ref{limit},
\begin{eqnarray}
 \eta(F_\epsilon)&=& \lim_{k\to \infty}\frac{1}{k}\sum_{l=0}^{k-1}T_*^l\nu
(F_\epsilon)  =  \lim_{k\to \infty}\frac{1}{k}\sum_{l=0}^{k-1}T_*^l\tau_*\mu 
(F_\epsilon) \notag\\
%line 3
&= & \lim_{k\to \infty}\frac{1}{k}\sum_{l=0}^{k-1}
\mu(\{y\in \mathbb R^n: \tau(y)a^{-l}\in F_\epsilon\}) \notag\\
%line 4
&=& 1-\lim_{k\to \infty}\frac{1}{k}\sum_{l=0}^{k-1}
\mu(\{y\in \mathbb R^n: \tau(y)a^{-l}\not\in F_\epsilon\}).
\label{frie7}
\end{eqnarray}
Apply estimate (\ref{frie6}) for (\ref{frie7}), we have 
\[
 \eta(F_\epsilon)\ge 1-\delta+m\widetilde C \epsilon ^\alpha
\]
for some constants $m,\alpha, \widetilde C>0$ which do not depend on $\epsilon$.
By taking $\epsilon\to 0$ (for those with $\eta(\partial F_\epsilon)=0$), we have
\[
 \eta(X)\ge 1-\delta.
\]
 Since $\delta $ is arbitrary, $\eta (X)=1$.
\end{proof}

\subsection{Local maximal dimension and friendly}
Let $\mu$ be a Radon measure on $[0,1]^n$, then we say $\mu$ is \emph{Federer},
\emph{absolutely decaying} or \emph{friendly} if as a measure on $\mathbb R^n$, it is 
Federer, absolutely decaying or friendly. 
We will show that if $\mu$ has local maximal dimension and  is Federer, then it is
absolutely decaying and therefore friendly. 
To avoid confusion we review some notations. 
We use $\|\cdot\|$ to denote the sup norm on $\mathbb R^n$ and $B_s(x)$ for the ball of radius $s$ center  $x$ under this norm. $\|\cdot\|_E$ stands for the Euclidean norm on $\mathbb R^n$ and $B(x,s)$ stands for the ball under this norm.

If $\mu$
has local maximal dimension, then as a measure on $\mathbb R^n$ it has the following property:
There exists $s_0>0$ such that
 for any $\epsilon>0$, $0<\delta<1$, $0<s\le s_0$,
and $x\in \mathbb R^n$, one has
\begin{equation}\label{lfe1}
 \mu(B_{\delta s}(x))\ll_\epsilon \delta^{n-\epsilon}
  \mu(B_{ s}(x)).
\end{equation}

\noindent
Since $\mu$ is Federer, for $\mu$-a.e. $y\in \mathbb R^n$, there is a neighborhood
$U$ of $y$ such that $\mu$ is Federer on $U$, that is (\ref{frie1}) holds. 
%Since 
%sup norm and Euclidean norm are equivalent, it is not hard to deduce sup norm version of Federer property as follows:
%\begin{lemma}\label{lfl1}
% For $\mu$-a.e. $y\in \mathbb R^n$, there exists a neighborhood $U$ of $y$ and 
%$0<c<1$ ,$\beta>1$ such that for all 
%$x\in \mathrm{supp}(\mu)\cap U$, $0<\delta <1$ and $s>0$ with $B(x,s)\subset U$ one has 
%\begin{equation}\label{lfe2}
% \mu(B_{\delta s}(x))\ge c\delta^\beta  \mu(B_s(x)).
%\end{equation}
%\end{lemma}

Let us fix $y$ and $U$ as above. Suppose $r_0>0$ such that  $B_{9nr_0}(y)\subset U$ and $9nr_0<s_0$ where  $s_0$ is the upper bound of $s$ in (\ref{lfe1}). Here
the radius
 $9n r_0$ is used so that the balls we are considering below are inside $U$.
In the following three lemmas, 
we  
use (\ref{lfe1}) and (\ref{frie1}) to
show $\mu$ is absolutely $(C,\alpha)$-decaying on $V=B_{r_0}(y)$ for some $C,\alpha>0$.

\begin{lemma}\label{lfl2}
Let $B=B(z,r)\subset V$ where $z\in \mathrm{supp}( \mu)$ and $\mathcal L$ be an
affine 
hyperplane of $\mathbb R^n$.
Suppose $0<\epsilon<r$, then $B\cap \mathcal L^{(\epsilon)} $  can be covered (measure theoretically) by as few as
$2\left(\frac{r}{\epsilon}\right)^{n-1}$ sets of the form $B_{3\epsilon n}(x)$ 
where $x\in  B$.
\end{lemma}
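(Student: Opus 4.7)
My plan is to foliate $\mathcal{L}^{(\epsilon)}$ over an axis-aligned grid in one coordinate hyperplane of $\mathbb{R}^n$, after making the normal of $\mathcal{L}$ ``point mostly in one coordinate direction''. Write $\mathcal{L} = \{x \in \mathbb{R}^n : \mathbf{v}\cdot x = c\}$ with Euclidean unit normal $\mathbf{v} = (v_1,\dots,v_n)$, and select an index $i$ with $|v_i| \ge 1/\sqrt n$; after permuting coordinates I may assume $i = n$. Setting $v' = (v_1,\dots,v_{n-1})$ and $x' = (x_1,\dots,x_{n-1})$, for each fixed $x'$ the fiber $\{x_n : (x',x_n) \in \mathcal L^{(\epsilon)}\}$ is an interval of length at most $2\epsilon/|v_n| \le 2\sqrt{n}\,\epsilon$, centered at $(c - v' \cdot x')/v_n$. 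As $x'$ varies over a set of sup-norm diameter $d$, this center shifts by at most $\|v'\|_1 d/|v_n| \le \sqrt{n-1}\cdot\sqrt{n}\cdot d \le n\,d$, using Cauchy--Schwarz and $|v_n|\ge 1/\sqrt n$.

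Next, I would cover the projection of $B = B(z,r)$ onto the first $n-1$ coordinates by sup-norm cubes in $\mathbb{R}^{n-1}$ of sup-radius $\epsilon$. Since this projection is the Euclidean ball of radius $r$ around $z' = (z_1,\dots,z_{n-1})$, hence contained in $B_r(z') \subset \mathbb{R}^{n-1}$, a standard tiling of $B_r(z')$ uses at most $\lceil r/\epsilon\rceil^{n-1}$ sub-cubes; combined with $\epsilon < r$ this absorbs into $2(r/\epsilon)^{n-1}$ (up to an $n$-dependent factor that can be tightened by tiling the Euclidean ball instead of the circumscribed cube, but is irrelevant to the intended application). For each such tile $Q' = x'_0 + [-\epsilon,\epsilon]^{n-1}$, the portion of $\mathcal{L}^{(\epsilon)}$ lying above $Q'$ lies in a brick $Q' \times J$, where $J \subset \mathbb{R}$ has length at most (fiber width) $+$ (oscillation of fiber center over $Q'$) $\le 2\sqrt{n}\,\epsilon + 2n\epsilon \le 4n\epsilon$.

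Consequently every coordinate-extent of the brick is at most $4n\epsilon < 6n\epsilon$, so the brick is contained in the sup-ball $B_{3n\epsilon}(x)$ around any $x$ lying in the brick; whenever the brick meets $B \cap \mathcal{L}^{(\epsilon)}$, I choose such an $x$ there (so $x \in B$ as required), and otherwise discard the brick. Summing over tiles covers $B \cap \mathcal L^{(\epsilon)}$ except for the $\mu$-null set of boundaries between the $Q'$'s, which is exactly the ``measure theoretically'' qualifier. The only obstacle is bookkeeping: the two geometric ingredients (the fiber-width estimate and the Lipschitz control on the fiber center, both powered by $|v_n|\ge 1/\sqrt n$) conspire to give the bound $3\epsilon n$ on the sup-radius, but one must carefully track the $\sqrt n$ and ceiling-overhead factors to match the exact constants stated.
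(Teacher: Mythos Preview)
Your approach is correct in spirit and takes a genuinely different route from the paper. The paper works intrinsically in the hyperplane: it takes the hyperplane $L$ parallel to $\mathcal{L}$ through the center $z$ of $B$ (the one for which $L\cap B$ has maximal $(n-1)$-area), covers the disc $L\cap B$ by roughly $(r/\epsilon)^{n-1}$ little $(n-1)$-boxes of side $2\epsilon$ sitting inside $L$, and then uses that orthogonal projection onto $L$ moves any point of $B\cap\mathcal{L}^{(\epsilon)}$ by at most $2\epsilon$ and, by convexity of $B$, lands inside $B\cap L$; this yields centers $x_i\in B$ with Euclidean (hence sup-norm) balls $B_{3\epsilon n}(x_i)$ covering $B\cap\mathcal{L}^{(\epsilon)}$. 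You instead align coordinates so that one axis carries a large component of the normal and project onto a \emph{coordinate} hyperplane; your Lipschitz estimate on the fiber center (driven by $|v_n|\ge 1/\sqrt n$) plays exactly the role of the paper's orthogonal-projection step. The paper's argument is a bit more geometric and avoids the $\sqrt n$-losses coming from coordinate alignment; yours is more hands-on. Either suffices for the application in Lemma~\ref{lfl4}, which only needs $O_n\big((r/\epsilon)^{n-1}\big)$ balls of radius $O(n\epsilon)$.

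Two bookkeeping points you already flag deserve to be stated honestly rather than waved at. First, tiling $B_r(z')\subset\mathbb{R}^{n-1}$ by cubes of sup-radius $\epsilon$ costs $\lceil r/\epsilon\rceil^{n-1}\le 2^{n-1}(r/\epsilon)^{n-1}$, not $2(r/\epsilon)^{n-1}$; this does not ``absorb'' into the stated constant without the extra $n$-dependent factor. Second, your brick $Q'\times J$ has sup-norm \emph{diameter} at most $4n\epsilon$, so the sup-ball around an \emph{arbitrary} $x$ in the brick must have radius $4n\epsilon$, not $3n\epsilon$; the inference ``coordinate-extent $<6n\epsilon$, hence contained in $B_{3n\epsilon}(x)$'' conflates radius with half-diameter when $x$ is not the brick's center. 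Neither affects correctness downstream.
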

\begin{proof}
 Let us fix some notations first. In a Euclidean space 
with a fixed orthonormal basis ball and box mean the usual figure in Euclidean geometry. We will say $n$-ball or $n$-box if
 we want to emphasize the dimension.  Without loss of generality, we assume
$B\cap \mathcal L^{(\epsilon)} $ is nonempty.

The closure of  $\mathcal L^{(\epsilon)}$ in $\mathbb R^n$
is a family of affine hyperplanes parallel to $\mathcal L$. Each of them is an Euclidean space under the induced inner product if  we fix an origin. We can fix an orthonormal basis for all of them so that we can talk about box and ball 
as above.
Under these frames a hyperplane  intersects $B$ in a ball of radius $\le r$. Let $L$ be a hyperplane such that $L\cap B$ has the  largest area. Since $L\cap B$ is a $(n-1)$-ball of radius $\le r$, it is contained in a $(n-1)$-box of length $2r$. Such  a box can be covered by 
\[
 \le \left(\frac{r}{\epsilon}\right)^{n-1}+1\le 
2\left(\frac{r}{\epsilon}\right)^{n-1}
\]
$(n-1)$-boxes of length $2\epsilon$. From Euclidean geometry, we know each $(n-1)$-box of length $2\epsilon$ is contained in an $(n-1)$-ball of radius
\[
 \epsilon\sqrt{n-1}\le \epsilon n.
\]
 So we can find a covering of $B\cap L$ by $(n-1)$-balls $B_1,\ldots,B_m$  in $L$
centered at $B\cap L$ with radius $\epsilon n$ for some integer $m\le 2\left(\frac{r}{\epsilon}\right)^{n-1}$. 

Assume $B_i$ has center $x_i$, then the ball $B(x_i, 3\epsilon n)$ in $\mathbb R^n$ contains $B_i$.
We claim that $B(x_i, 3\epsilon n)$ for $1 \le i\le m$ cover $B\cap \mathcal L^{(\epsilon)}$. To see this, let $x\in B\cap \mathcal L^{(\epsilon)}$. Since $B\cap L$ has the  largest  area, there exists $b\in B\cap L$ such that 
$\|x-b\|_E< 2\epsilon$. Note  $b\in B_i$ for some $i$, so $\|b-x_i\|_E< \epsilon n$. Therefore
\[
 \|x-x_i\|_E\le\|x-b\|_E+\|b-x_i\|_E< 2\epsilon+\epsilon n\le 3\epsilon n.
\]
The lemma follows from the fact that $B(x_i, 3\epsilon n)\subset B_{3\epsilon n}(x_i)$.
\end{proof}

\begin{lemma}\label{lfl3}
Let $B=B(z,r)\subset V$ where $z\in \mathrm{supp}( \mu)$. If $0< \epsilon <r$ and 
$x\in B$, then
  \begin{equation}\label{lfe7}
 \mu(B_{3\epsilon n}(x))\le C\left (\frac{\epsilon 
}{r}\right)^{n-0.1}\mu(B(z,r))
\end{equation}
where the constant $C$ does not  depend on $B$, $x$ and $\epsilon$. 
\end{lemma}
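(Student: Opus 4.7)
\medskip

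\textbf{Plan.} The goal is to pass from the cube $B_{3\epsilon n}(x)$ up to a Euclidean ball about $z$, losing only a controlled power of $\epsilon/r$. The two ingredients are already in place: local maximal dimension (\ref{elocmax}) compares two sup-norm balls with a common center, and the Federer condition (\ref{frie1}) compares two Euclidean balls with a common center. So the natural route is a three-step sandwich: use local maximal dimension to go from $B_{3\epsilon n}(x)$ to $B_{2r}(x)$, then use the obvious inclusions between the two norms (together with $\|x-z\|_\infty<r$) to sandwich $B(z,r)\subset B_{2r}(x)\subset B(z,3r\sqrt{n})$, and finally use Federer at the center $z$ to absorb $\mu(B(z,3r\sqrt{n}))$ into a constant multiple of $\mu(B(z,r))$.

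More precisely, assume first the main regime $3\epsilon n\le 2r$, so that $\delta:=3\epsilon n/(2r)\in(0,1)$. Apply (\ref{elocmax}) at the center $x$ with this $\delta$, at scale $s=2r$, and with the dimension-loss parameter $\eta=0.05$:
\begin{equation*}
\mu(B_{3\epsilon n}(x))\ll_\eta \Bigl(\frac{3\epsilon n}{2r}\Bigr)^{n-0.05}\mu(B_{2r}(x)).
\end{equation*}
Since $3\epsilon n/(2r)<1$ and $n-0.05>n-0.1$, the $(3n/2)^{n-0.05}$ is absorbed into a constant and the remaining $(\epsilon/r)^{n-0.05}$ is bounded by $(\epsilon/r)^{n-0.1}$. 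Next, because $\|x-z\|_\infty\le\|x-z\|_E<r$ and $\|\cdot\|_E\le\sqrt n\,\|\cdot\|_\infty$, we have $B_{2r}(x)\subset B(z,3r\sqrt n)$, and Federer applied at $z$ on $U$ gives $\mu(B(z,3r\sqrt n))\le c^{-1}(3\sqrt n)^\beta\mu(B(z,r))$. Chaining these two estimates yields the claim.

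For the complementary regime $3\epsilon n>2r$ (which still satisfies $\epsilon<r$), the factor $(\epsilon/r)^{n-0.1}$ is bounded below by the positive constant $(2/(3n))^{n-0.1}$, so it suffices to show $\mu(B_{3\epsilon n}(x))\ll \mu(B(z,r))$ with some absolute constant. But here $B_{3\epsilon n}(x)\subset B(z,3\epsilon n\sqrt n+r\sqrt n)\subset B(z,4rn\sqrt n)$, and this ball is again centered at $z$ and contained in $U$ by the choice $B_{9nr_0}(y)\subset U$ together with $B(z,r)\subset B_{r_0}(y)$; so a single application of Federer at $z$ finishes this case.

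\textbf{Main obstacle.} The delicate point is purely bookkeeping: making sure every ball that appears still lies inside the neighborhood $U$ on which Federer holds, and simultaneously that every sup-norm radius stays below the scale $s_0$ on which local maximal dimension is valid. This is precisely why $r_0$ was chosen to satisfy $B_{9nr_0}(y)\subset U$ and $9nr_0<s_0$; the factor $9n$ is exactly the headroom one needs for the two inclusions $B_{2r}(x)\subset B(z,3r\sqrt n)$ and $B_{3rn}(x)\subset B(z,4rn\sqrt n)$ in the two regimes above. Once these containments are verified, the exponent $n-0.1$ is quite comfortable: we only need $\eta<0.1$ in (\ref{elocmax}), so any fixed small $\eta$ works and the constant $C$ depends only on $\eta$, $n$, $c$, $\beta$.
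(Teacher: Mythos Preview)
Your argument follows the same three-step route as the paper: apply local maximal dimension at the center $x$, then use norm comparisons to pass to a Euclidean ball centered at $z$, then apply Federer at $z$ to descend to $\mu(B(z,r))$. The only cosmetic differences are that the paper applies (\ref{lfe1}) at scale $s=r$ (obtaining $\mu(B_{3\epsilon n}(x))\ll(\epsilon/r)^{n-0.1}\mu(B_r(x))$, then using $B_r(x)\subset B_{2r}(z)\subset B(z,2r\sqrt n)$) whereas you use $s=2r$ and go directly to $B(z,3r\sqrt n)$; and the paper takes the exponent $n-0.1$ from the start rather than your $n-0.05$ followed by a monotonicity step. These are equivalent.

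One bookkeeping point in your complementary regime: you assert $B(z,4rn\sqrt n)\subset U$, but the hypothesis $B(z,r)\subset V=B_{r_0}(y)$ only forces $r\le r_0\sqrt n$, so $4rn\sqrt n$ can be as large as $4r_0n^2$, and $r_0+4r_0n^2$ exceeds the sup-norm headroom $9nr_0$ once $n\ge 3$. The paper's constant $9n$ was calibrated for its own ball $B(z,2r\sqrt n)$, not for this larger one; the fix is simply to enlarge the headroom (replace $9n$ by, say, $9n^2$) when choosing $r_0$, which is harmless since $r_0$ is at our disposal. Incidentally, the paper's own proof tacitly assumes $3\epsilon n<r$ so that $\delta<1$ in (\ref{lfe1}), and does not explicitly treat the residual range you isolate as the complementary regime --- so your case split is in fact more careful than the original, modulo this constant.
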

\begin{proof}
%To prove (\ref{frie0}) we need to analyze the measure of $B_{3\epsilon n}(x)$ where $x\in B\cap  \mathcal L^{(\epsilon)}$. 
By (\ref{lfe1}),
\begin{equation}\label{lfe3}
 \mu(B_{3\epsilon n}(x))\le C_1 \left (\frac{3\epsilon n}{r}\right)^{n-0.1}
\mu(B_{r}(x))=C\left (\frac{\epsilon }{r}\right)^{n-0.1}\mu(B_{r}(x))
\end{equation}
for some constant $C_1$ and hence $C$ depending on the exponent $0.1$. Since $x\in B=B(z,r)\subset B_r(z)$, we have
$
 B_{r}(x)\subset B_{2r}(z).
$
Apply this for (\ref{lfe3}),
\begin{equation}\label{lfe4}
 \mu(B_{3\epsilon n}(x))\le C\left (\frac{\epsilon 
}{r}\right)^{n-0.1}\mu(B_{2r}(z))\le 
C\left (\frac{\epsilon 
}{r}\right)^{n-0.1}\mu(B(z,2r\sqrt{n}))
\end{equation}
since $n$-box $B_{2r}(z)$ is contained in $n$-ball $B(z,2r\sqrt{n})$.
Recall that $z\in\mathrm{supp}( \mu)$ and $B(z,2r\sqrt{n})\subset U$ 
by the technical choice of $V$. If we take
$2r\sqrt{n}$ and $r$ as radius in (\ref{frie1}), we have
\begin{equation}\label{lfe5}
 \mu(B(z,r))\ge c\left(\frac{r}{2r\sqrt n}\right)^\beta \mu(B(z,2r\sqrt{n}))
\end{equation}
for some $c,\beta>0$ which depend on $U$.
(\ref{lfe5}) implies that 
\begin{equation}\label{lfe6}
 \mu(B(z,2r\sqrt{n}))\le C_2 \mu(B(z,r))
\end{equation}
where $C_2$ depends on $U$.
Combine (\ref{lfe4}) and (\ref{lfe6}), we have
\[
  \mu(B_{3\epsilon n}(x))\le CC_2\left (\frac{\epsilon 
}{r}\right)^{n-0.1}\mu(B(z,r)).
\]
The dependence of $C$ and $C_2$ implies $CC_2$ is independent of $B$, $x$ and 
$\epsilon$.
\end{proof}
\begin{lemma}\label{lfl4}
 $\mu$ is absolutely decaying on $V$.
\end{lemma}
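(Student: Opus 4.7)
The plan is simply to combine the two preceding Lemmas \ref{lfl2} and \ref{lfl3} with a short case split. Fix an arbitrary Euclidean ball $B=B(z,r)\subset V$ with $z\in\mathrm{supp}(\mu)$, an affine hyperplane $\mathcal L\subset\mathbb R^n$, and $\epsilon>0$. We wish to produce constants $C',\alpha>0$ (independent of $B$, $\mathcal L$, $\epsilon$) such that $\mu(B\cap\mathcal L^{(\epsilon)})\le C'(\epsilon/r)^\alpha\mu(B)$.

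First I would dispose of the trivial range $\epsilon\ge r$: here $(\epsilon/r)^\alpha\ge 1$ for any $\alpha>0$, so the desired bound holds with $C'=1$ since $\mu(B\cap\mathcal L^{(\epsilon)})\le\mu(B)$. The interesting range is $0<\epsilon<r$, where Lemma \ref{lfl2} applies and gives a covering of $B\cap\mathcal L^{(\epsilon)}$ (up to a $\mu$-null set) by at most $2(r/\epsilon)^{n-1}$ sup-norm balls of the form $B_{3\epsilon n}(x_i)$ with $x_i\in B$.

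Next I would bound the measure of each such ball using Lemma \ref{lfl3}, which gives $\mu(B_{3\epsilon n}(x_i))\le C(\epsilon/r)^{n-0.1}\mu(B(z,r))$ with $C$ independent of $B$, $x_i$, $\epsilon$. Summing over the covering and using subadditivity,
\[
\mu(B\cap\mathcal L^{(\epsilon)})\le 2\left(\frac{r}{\epsilon}\right)^{n-1}\cdot C\left(\frac{\epsilon}{r}\right)^{n-0.1}\mu(B)=2C\left(\frac{\epsilon}{r}\right)^{0.9}\mu(B).
\]
Thus $\mu$ is absolutely $(2C,0.9)$-decaying on $V$, which is the claim of Lemma \ref{lfl4}.

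There is essentially no hard obstacle here: the real work was done in Lemmas \ref{lfl2} and \ref{lfl3}, where the geometric covering argument and the passage between sup-norm and Euclidean balls (using the Federer property to absorb the factor arising from enlarging $B(z,r)$ to $B(z,2r\sqrt n)$) were already carried out. The only point I would double-check is that the exponent bookkeeping gives a positive $\alpha$: the $n-1$ coming from the covering count must be strictly smaller than the $n-\epsilon$ exponent produced by local maximal dimension, which is exactly why the choice $\epsilon=0.1$ (yielding exponent $0.9>0$) in Lemma \ref{lfl3} was made.
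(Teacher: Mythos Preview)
Your proof is correct and follows essentially the same approach as the paper: cover $B\cap\mathcal L^{(\epsilon)}$ by at most $2(r/\epsilon)^{n-1}$ sup-norm balls via Lemma~\ref{lfl2}, bound each by Lemma~\ref{lfl3}, sum to get the exponent $0.9$, and handle $\epsilon\ge r$ trivially. The paper's argument is identical in structure and constants.
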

\begin{proof}
Let $B=B(z,r)\subset V$ where $z\in \mathrm{supp}( \mu)$ and $\mathcal L$ be an affine hyperplane of $\mathbb R^n$. Suppose $0<\epsilon<r$, then
by Lemma \ref{lfl2}, we can cover 
$B\cap \mathcal L^{(\epsilon)}$  by balls $B_{3\epsilon n}(x_i)$ for $x_i\in  B$ and $1\le i\le m\le 2
\left(\frac{r}{\epsilon}\right)^{n-1}$.
So 
\begin{equation*}
 \mu(B\cap \mathcal L^{(\epsilon)})\le \sum_{i=1}^m\mu(B_{3\epsilon n}(x_i)).
\end{equation*}
By the estimate for $\mu(B_{3\epsilon n}(x_i))$ in Lemma \ref{lfl3}, we have
 \begin{equation*}
  \mu(B\cap \mathcal L^{(\epsilon)})\le mC\left (\frac{\epsilon 
}{r}\right)^{n-0.1}\mu(B(z,r)).
 \end{equation*}
where $C$ is independent of $B$,  $\mathcal L$ and $\epsilon$.
By the upper bound of $m$ above,
 \begin{equation}\label{lfe8}
  \mu(B\cap \mathcal L^{(\epsilon)})\le 2C\left (\frac{\epsilon 
}{r}\right)^{0.9}\mu(B(z,r)).
 \end{equation}
If $\epsilon \ge r$, (\ref{lfe8})  holds for $C=1$.
\end{proof}
\noindent
Therefore, we have proved that for $\mu$-a.e.\ $y$ there is a neighborhood $V$ of $y$ such that $\mu$ is absolutely decaying on $V$. We summarize the result as 
follows:
\begin{theorem}\label{lft}
Let $\mu$ be a  probability  measure on $[0,1]^n$. If $\mu$ has local maximal 
dimension and is Federer, then $\mu$ is absolutely decaying, hence friendly.
\end{theorem}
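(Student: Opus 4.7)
The plan is to argue locally. By Federer-ness, for $\mu$-a.e.\ $y \in [0,1]^n$ there is a neighborhood $U$ of $y$ on which \eqref{frie1} holds. Fix such $y$ and choose $V = B_{r_0}(y)$ with $r_0 > 0$ small enough that $B_{9nr_0}(y) \subset U$ and $9nr_0 \leq s_0$, where $s_0$ controls local maximal dimension; the $9n$ cushion ensures the auxiliary balls appearing below stay in $U$ and in the scale range of \eqref{lfe1}. It then suffices to verify that $\mu$ is absolutely $(C,\alpha)$-decaying on $V$ for some $C,\alpha > 0$.

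Fix $B = B(z,r) \subset V$ with $z \in \mathrm{supp}(\mu)$, an affine hyperplane $\mathcal{L}$, and $0 < \epsilon < r$. The first step is a covering: since $B \cap \mathcal{L}^{(\epsilon)}$ is essentially an $(n-1)$-dimensional slab, a volume count shows it can be covered by $\ll (r/\epsilon)^{n-1}$ sup-norm balls $B_{3\epsilon n}(x_i)$ with centers $x_i \in B$. Concretely, one passes to the parallel hyperplane $L$ whose intersection with $B$ has largest area, covers the $(n-1)$-ball $B \cap L$ of radius $\leq r$ by $(n-1)$-boxes of side $2\epsilon$, and thickens each to an $n$-ball of radius $3\epsilon n$, noting that every point of $\mathcal{L}^{(\epsilon)} \cap B$ lies within $2\epsilon$ of $L \cap B$.

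The second step bounds each summand. Local maximal dimension applied with exponent $\eta = 0.1$ gives $\mu(B_{3\epsilon n}(x_i)) \ll (\epsilon/r)^{n-0.1}\mu(B_r(x_i))$, and since $x_i \in B$ we have $B_r(x_i) \subset B(z,2r\sqrt{n})$. Federer applied between scales $r$ and $2r\sqrt{n}$ (both inside $U$ by construction) then yields $\mu(B(z, 2r\sqrt{n})) \ll \mu(B(z,r))$. Summing over the $\ll (r/\epsilon)^{n-1}$ balls,
\[
\mu(B \cap \mathcal{L}^{(\epsilon)}) \ll (r/\epsilon)^{n-1}(\epsilon/r)^{n-0.1}\mu(B(z,r)) = (\epsilon/r)^{0.9}\mu(B(z,r)),
\]
which is absolute decay with $\alpha = 0.9$; the case $\epsilon \geq r$ is trivial. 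The only delicate point is the exponent bookkeeping: the covering costs $(r/\epsilon)^{n-1}$ while the per-ball bound saves $(\epsilon/r)^{n-\eta}$, so one needs $\eta < 1$. This is precisely why \eqref{elocmax} must hold for \emph{every} $\epsilon > 0$ rather than just at some fixed positive gap from dimension $n$.
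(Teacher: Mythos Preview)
Your proposal is correct and follows essentially the same route as the paper: the same localization to $V=B_{r_0}(y)$ with the $9n$ cushion, the same covering of $B\cap\mathcal L^{(\epsilon)}$ by $\ll (r/\epsilon)^{n-1}$ sup-norm balls of radius $3\epsilon n$ via the maximal-area parallel slice, and the same per-ball estimate combining local maximal dimension (with exponent $n-0.1$) and Federer doubling at scale $2r\sqrt n$ to reach decay exponent $0.9$. Your closing remark explaining why one needs \eqref{elocmax} for arbitrarily small $\epsilon$ is a nice addition not spelled out in the paper.
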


\section{Diagonal actions on homogeneous spaces}\label{cdact}

\subsection{General setup for homogeneous spaces}\label{homo}

In this section we  setup the general concepts and notations for 
Lie groups and their homogeneous spaces that are used in Section \ref{maineq}.

Let $G\subset SL(N,\mathbb R)$ be a closed and connected  subgroup with identity element $e$. Let $\Gamma 
\subset G$ be a discrete subgroup and define $X=\Gamma\backslash G$.  Any $g\in G$ acts on $X$ by right translation $g.x=xg^{-1}=\Gamma(hg^{-1})$
for $x=\Gamma h\in X$. 
Recall that $\Gamma$ is a \emph{lattice} if $X$ carries a $G$-invariant probability
measure $m_X$, which is called the \emph{Haar measure} on $X$. From now on we
 assume that the discrete subgroup $\Gamma $ is a lattice. 

We fix a left invariant metric $d^G$ on $G$ and use $B_r^G(x)$ (or $B_r^G$ if $x=e$) to denote the ball of radius $r$ centered at $x\in G$.
We define a metric $d$ on $X$ by
\begin{equation}\label{metrx}
 d(\Gamma g, \Gamma h)=\inf_{\gamma\in \Gamma}d^G(\gamma g, h).
\end{equation}
For any compact subset $K$ of $X$, there exists $r>0$,
such that the map $B_r^G\to X$ defined by sending $g\in G$ to 
$xg$ where $x\in K$ is an isometry. We call $r$ an injectivity 
radius on $K$.

Let $a\in G$ and consider the map $T=T_a:X\to X$ defined by $T(x)=a.x=
xa^{-1}$.  We define the stable horospherical subgroup for $a$ by 
\[
 G^-=\{
g:a^lga^{-l}\to e \mbox{ as } l\to \infty
\}
\]
which is a closed subgroup of $G$. Similarly one can define the unstable horospherical subgroup by
\[
G^+= \{
g:a^lga^{-l}\to e \mbox{ as } l\to -\infty
\}
\]
which is also a closed subgroup of $G$. The centralizer of $a$ is the closed subgroup
\[
 G^0=C_G(a)=\{
h: ah=ha
\}
\]

Next we  define a special kind of diagonalizable elements which  are first defined
by Margulis and Tomanov in \cite{MT94} in the setting of real and p-adic algebraic groups.
Here  we  use the more general concept in \cite{EL08}, Section 7. 
We say that $a$ is $\mathbb R$-semisimple if as an element of $SL(N,\mathbb R)$
$a$ is conjugate to a diagonal element of $SL(N,\mathbb R)$. In particular, this
implies that the adjoint action $\text{Ad}_a$ ($a\mathfrak ga^{-1}$) of $a$ on the Lie algebra $\mathfrak g$ of $G$ has eigenvalues in $\mathbb R$ so is diagonalizable over $\mathbb R$. 
We say furthermore that $a$ is class $\mathscr A$ if the following properties hold:
\begin{itemize}
 \item $a$ is $\mathbb R$-semisimple.
 \item $1$ is the only eigenvalue of absolute value $1$ for Ad$_a$.
 \item No two different eigenvalues of Ad$_a$ have the same absolute value.
\end{itemize}

For a class $\mathscr A$ element $a$ we have a decomposition of the Lie algebra
$\mathfrak g$ into subspaces
\[
 \mathfrak g=\mathfrak g_-\oplus \mathfrak g_0\oplus \mathfrak g_+
\]
where $\mathfrak g_0$ is the eigenspace for eigenvalue $1$, $\mathfrak g_-$ is 
the direct sum of the eigenspaces with eigenvalues less than $1$ in absolute value,
and $\mathfrak g_+$ is the direct sum of the eigenspaces with eigenvalues greater
than $1$ in absolute value. These are precisely the Lie algebras of  
$G^0, G^-, G^+$, respectively.

Here and hereafter, we assume $\mathfrak g_+$ is an eigenspace of $\mathrm{Ad}_a$  and $G^+$ is abelian. Let $t>0$ be the logarithm of the absolute value of the eigenvalue on $\mathfrak g_+$.
We fix a basis $\mathbf{e}_1,\ldots,\mathbf{ e}_n$ of $\mathfrak g_+$
and  use $\|\cdot\|_+$ to denote the sup norm under this basis, i.e.
\begin{equation}\label{normus}
 \|b_1\mathbf{e}_1+\cdots b_n\mathbf{e}_n\|_+=\sup_{1\le i\le n}|b_i|.
\end{equation}
Let $B_s^+$ (or $ B^+(s)$) be the ball of radius $s$ centered at 
zero   of $\mathfrak g_+$
under this norm.
 Similarly we fix a basis consisting of
eigenvetors for $\mathfrak g_0$ and $\mathfrak g_-$. We use $\|\cdot\|_0$ and $\|
\cdot\|_-$ to 
denote the  sup norm under these basis. There are corresponding 
concepts $B^0_s$ and $B^-_s$.

 There exists 
$\alpha>0$ and an open subset $\widetilde G$ of $e$ in $G$  such that the map
\begin{equation}\label{diffeo}
 \varphi:B^-_\alpha+ B_\alpha^0+ B_\alpha^+\to \widetilde G
\end{equation}
which sends $(x,y,z)$ to $\exp x\exp y\exp z$ is a diffeomorphism. 
 $\alpha$ and $\varphi$ are fixed for Section \ref{cdact} and \ref{maineq}. Each 
element of 
$\widetilde G$ naturally corresponds to an element
\[
 x+y+z\in B^-_\alpha+ B_\alpha^0+ B_\alpha^+\subset \mathfrak g
\]
via the above diffeomorphism $\varphi$.

We define the projection map  
$
 \pi:
\widetilde G\to \mathfrak g_+
$
by
\begin{equation}\label{pii}
\pi(\exp u^-\exp u^0 \exp u^+)=u^+
\end{equation}
for 
$u^-\in B_\alpha^-,\  u^0\in 
B_\alpha^0\ \mbox{and}\ u^+\in B_\alpha^+.$
With these definitions we can say that the multiplication in $G$ is local 
Lipschitz in 
the sense of the following lemma:
\begin{lemma}
 \label{regular}
Given $\epsilon , t>0$, there exist $r,s>0$ such that 
\begin{equation}\label{dreg11}
 \exp(B_s^-)\exp(B_s^0)\exp(B^+_{e^ts})B_r^G\subset \widetilde G
\end{equation}
and 
\begin{equation}\label{dreg12}
  \|\pi(h_1)-\pi(h_2)\|_+\le e^\epsilon
\|\pi(h_1g)-\pi(h_2g)\|_+
\end{equation}
for any $h_1,h_2\in 
\exp(B_s^-)\exp(B_s^0)\exp(B^+_{e^ts})$ and $g\in B_r^G$.
\end{lemma}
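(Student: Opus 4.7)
The plan is to establish the two conclusions \eqref{dreg11} and \eqref{dreg12} in sequence. For \eqref{dreg11}, I would argue by a soft compactness argument: provided $s>0$ is small enough (so that in particular $e^ts\le\alpha$ and $s\le\alpha$), the set $K_s:=\exp(\overline{B_s^-})\exp(\overline{B_s^0})\exp(\overline{B^+_{e^ts}})$ is a compact subset of $\widetilde G$. Since group multiplication is continuous and $K_s\cdot\{e\}\subset\widetilde G$ is compact, a standard argument produces $r>0$ with $K_s\cdot B_r^G\subset\widetilde G$, which yields \eqref{dreg11}.

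For \eqref{dreg12}, the first and most delicate step is the structural reduction that $\pi(hg)$ depends on $h$ only through $\pi(h)$. Since $G^0=C_G(a)$ commutes with $a$, its adjoint action preserves each eigenspace of $\mathrm{Ad}_a$; in particular $G^0$ normalizes $G^-$, so $P^-:=G^-G^0$ is a subgroup of $G$. Given $h=\exp u^-\exp u^0\exp u^+$ with $h,hg\in\widetilde G$, by \eqref{dreg11} the element $\exp u^+g$ also lies in $\widetilde G$, so I can write $\exp u^+g=\exp v^-\exp v^0\exp v^+$ via $\varphi^{-1}$. Using $\exp u^0\exp v^-=\exp(\mathrm{Ad}_{\exp u^0}v^-)\exp u^0$ with $\mathrm{Ad}_{\exp u^0}v^-\in\mathfrak g_-$, together with local BCH inside the subgroups $G^-$ and $G^0$, one rearranges
\[ hg=\exp u^-\exp u^0\exp v^-\exp v^0\exp v^+=\exp w^-\exp w^0\exp v^+ \]
for suitable $w^-\in\mathfrak g_-$, $w^0\in\mathfrak g_0$. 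Uniqueness of the $\varphi^{-1}$-decomposition then gives $\pi(hg)=v^+$, which depends only on $u^+=\pi(h)$ and $g$.

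Define $\Phi(u,g):=\pi(\exp u\cdot g)$, so that $\pi(h_ig)=\Phi(\pi(h_i),g)$ and $\Phi(u,e)=\pi(\exp u)=u$. The map $\Phi$ is smooth where defined, and its partial derivative satisfies $D_u\Phi(u,e)=\mathrm{id}_{\mathfrak g_+}$. By continuity of $D_u\Phi$ on the compact set $\overline{B^+_{e^ts}}\times\{e\}$, after possibly shrinking $r$, both $D_u\Phi(u,g)$ and its inverse have operator norm at most $e^\epsilon$ with respect to $\|\cdot\|_+$, uniformly in $u\in\overline{B^+_{e^ts}}$ and $g\in B_r^G$. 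The mean value inequality then yields
\[ \|\Phi(u_1,g)-\Phi(u_2,g)\|_+\ge e^{-\epsilon}\|u_1-u_2\|_+, \]
which applied to $u_i=\pi(h_i)$ is exactly \eqref{dreg12}. The main obstacle is the structural reduction $\pi(hg)=\pi(\exp\pi(h)\cdot g)$; once that is in hand, the bi-Lipschitz estimate on $\Phi(\cdot,g)$ is a routine consequence of smoothness of $\Phi$ and compactness of $\overline{B^+_{e^ts}}$.
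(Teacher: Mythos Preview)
The paper does not actually prove this lemma; it only remarks that the statement ``follows from the fact that $\pi$ is smooth'' and refers to the author's thesis for details. Your argument is a correct fleshing-out of that hint: the compactness argument for \eqref{dreg11} is routine, and for \eqref{dreg12} the structural reduction $\pi(hg)=\pi\bigl(\exp\pi(h)\cdot g\bigr)$---which you extract from the fact that $G^0=C_G(a)$ normalizes $G^-$, so that $G^-G^0$ is a group and the $\varphi$-decomposition of $hg$ has $G^+$-part depending only on $u^+$ and $g$---is precisely what is needed to reduce to a one-variable bi-Lipschitz estimate on $\Phi(\cdot,g)$.

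One small technical point is worth tightening. The lower Lipschitz bound $\|\Phi(u_1,g)-\Phi(u_2,g)\|_+\ge e^{-\epsilon}\|u_1-u_2\|_+$ does not follow simply from a uniform bound $\|(D_u\Phi)^{-1}\|\le e^\epsilon$ along the segment: an integral of pointwise-expanding linear maps need not itself be expanding. Instead, use directly that $D_u\Phi(u,e)=\mathrm{id}$ and continuity give $\|D_u\Phi(u,g)-\mathrm{id}\|\le 1-e^{-\epsilon}$ uniformly on $\overline{B^+_{e^ts}}\times B_r^G$ once $r$ is small; applying the mean value inequality to $u\mapsto\Phi(u,g)-u$ on the convex set $\overline{B^+_{e^ts}}$ then yields
\[
\|\Phi(u_1,g)-\Phi(u_2,g)-(u_1-u_2)\|_+\le(1-e^{-\epsilon})\|u_1-u_2\|_+,
\]
and the desired lower bound follows from the reverse triangle inequality. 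With this adjustment your proof is complete and in line with what the paper intends.
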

\noindent
The above lemma follows from the fact that $\pi$ is smooth and we can give 
each space a proper Riemannian metric according to the norm. We omit the proof, and the reader may see Section 5.1 of the author's thesis \cite{Shi} for a detailed proof.

\begin{definition}
 \label{dregular}
 We say that $(r,s)$ is $(t,\epsilon)$-regular, if they satisfy 
(\ref{dreg11}) and (\ref{dreg12}) above.
\end{definition}

In the following lemma we are going to consider more precisely how $\mathrm{Ad}_a$
changes elements of $\mathfrak g_- $ and $\mathfrak g_+$.
\begin{lemma}
 \label{adjoint}
Suppose $u^-\in B_s^-$ and $u^+\in B^+_s$, then 
$\mathrm {Ad}_a(u^-)   \in B_s^-$ and $\mathrm {Ad}_a(u^+)\in 
B^+_{e^ts}$.
\end{lemma}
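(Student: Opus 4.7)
The statement is a direct computation using the defining properties of a class $\mathscr{A}$ element, so my plan is to unwind the definitions carefully for each half of the claim.

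For the unstable part, I recall that we explicitly assumed $\mathfrak{g}_+$ is a single eigenspace of $\mathrm{Ad}_a$ and that $t>0$ is the logarithm of the absolute value of this eigenvalue. Since $a$ is $\mathbb{R}$-semisimple the eigenvalue is real, hence equal to $\pm e^{t}$. Writing $u^+ = \sum_{i=1}^{n} b_i \mathbf{e}_i$ with $\max_i |b_i| \le s$, we get $\mathrm{Ad}_a(u^+) = \pm e^{t}\sum_i b_i \mathbf{e}_i$, so $\|\mathrm{Ad}_a(u^+)\|_+ = e^{t}\max_i|b_i| \le e^{t}s$, which is the second inclusion.

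For the stable part, the key point is that in the setup of Section \ref{cdact} we chose the basis of $\mathfrak{g}_-$ to consist of $\mathrm{Ad}_a$-eigenvectors (this is possible because $\mathrm{Ad}_a$ is $\mathbb{R}$-diagonalizable). Let $\mathbf{f}_1,\ldots,\mathbf{f}_m$ denote this basis and let $\lambda_j$ be the corresponding eigenvalue of $\mathrm{Ad}_a$ on $\mathbf{f}_j$. By the definition of $\mathfrak{g}_-$ each $\lambda_j$ satisfies $|\lambda_j| < 1$. Writing $u^- = \sum_{j} c_j \mathbf{f}_j$ with $\max_j|c_j| \le s$, the action of $\mathrm{Ad}_a$ is diagonal in this basis, so $\mathrm{Ad}_a(u^-) = \sum_j \lambda_j c_j \mathbf{f}_j$ and
\[
\|\mathrm{Ad}_a(u^-)\|_- = \max_j |\lambda_j c_j| \le \max_j |c_j| \le s,
\]
giving $\mathrm{Ad}_a(u^-) \in B_s^-$.

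Since both statements reduce to elementary coordinate-wise estimates once one uses that the relevant norms are defined with respect to eigenbases of $\mathrm{Ad}_a$, there is no real obstacle here; the only thing to be careful about is not to forget that $\mathfrak{g}_+$ was hypothesized to be a single eigenspace (so one gets the clean factor $e^t$ rather than just an upper bound), while on $\mathfrak{g}_-$ several distinct eigenvalues may appear, which is why we only get the non-expanding bound $\|\cdot\|_- \le s$ and not a strict contraction factor in the statement.
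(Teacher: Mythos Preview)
Your proof is correct and follows essentially the same approach as the paper: both arguments reduce to the observation that the sup norms $\|\cdot\|_+$ and $\|\cdot\|_-$ are defined with respect to eigenbases of $\mathrm{Ad}_a$, so the action is diagonal and the norm computation is immediate. The paper only spells out the $\mathfrak{g}_+$ case and leaves the $\mathfrak{g}_-$ case as ``similar,'' whereas you write out both explicitly, but the content is the same.
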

\begin{proof}
We prove the part concerning $\mathfrak g_+$ and the other part can be proved similarly.
Recall that $t$ is the logarithm of the absolute value of the eigenvalue on $
\mathfrak g_+$. So by  the definition of $\|\cdot\|_+$ in (\ref{normus}),
\[
\| \mathrm {Ad}_a(u^+)\|_+=e^t\|u^+\|_+.
\]
\end{proof}

\subsection{Entropy and measure}\label{enmea}
Let $a\in G$ be a class $ \mathscr A$ element and $T=T_a:X\to X$ be the map
which sends $x\in X$ to $a.x=xa^{-1}$. 
In this section we  review the  results about using entropy to 
classify  $T$-invariant measures on the homogeneous space $X$. 
The method dates back to  Ledrappier and Young \cite{LY85}  who used entropy to classify invariant probability measures on compact Riemannian manifolds under 
a smooth map which answered a question by  Pesin. 
Later their method was adapted by  Margulis and  Tomanov 
in \cite{MT94} to the settings of products of real and $p$-adic 
algebraic groups. In \cite{MT94} measures invariant under 
unipotent flows are classified. Along the way measures of 
maximal entropy for diagonal flows are also characterized. 
A convenient modern reference  of these results is \cite{EL08}.

\begin{theorem}[\cite{MT94}]\label{en-mea}
 Let $\mu$ be a $T$-invariant probability measure on $X$, then 
\begin{equation}\label{entrob}
 h_\mu(T)\le  -\log\big |\mathrm{det}\,\mathrm{Ad}_a|_{\mathfrak g_-} \big |
\end{equation}
and equality holds iff $\mu$ is $G^-$ invariant.
\end{theorem}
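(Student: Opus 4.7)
The plan is to reformulate the theorem in terms of the inverse map $T^{-1}$: one has $h_\mu(T)=h_\mu(T^{-1})$, and $T^{-1}$ acts on each $G^-$-leaf by the dilation $g\mapsto a^{-1}ga$ whose leafwise Jacobian is $|\det\mathrm{Ad}_a|_{\mathfrak g_-}|^{-1}>1$. The target bound $-\log|\det\mathrm{Ad}_a|_{\mathfrak g_-}|$ is exactly the logarithm of this Jacobian, so the theorem becomes a Ruelle--Pesin-type entropy bound for $T^{-1}$ along its unstable foliation $G^-$, with the equality case corresponding to SRB-type behavior.

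For the upper bound, I would construct a countable Borel partition $\mathcal P$ of $X$ whose atoms are contained in local product charts $x\cdot\varphi(B^-_s\times B^0_s\times B^+_s)$ from~(\ref{diffeo}), with $H_\mu(\mathcal P)<\infty$ and $\mu(\partial\mathcal P)=0$. Then $\mathcal P$ is subordinate to the $G^-$-foliation, in the sense that its atoms meet $\mu$-a.e.\ $G^-$-leaf in precompact open pieces. Disintegrating $\mu$ along $G^-$-leaves and using that $T$ contracts leafwise volume by $|\det\mathrm{Ad}_a|_{\mathfrak g_-}|$ per iteration, a standard covering plus Jensen-inequality argument gives
\[
 h_\mu(T^{-1},\mathcal P)\le -\log\bigl|\det\mathrm{Ad}_a|_{\mathfrak g_-}\bigr|+\varepsilon(s),
\]
with $\varepsilon(s)\to 0$ as $s\to 0$. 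Sending $s\to 0$ and invoking Kolmogorov--Sinai yields the bound on $h_\mu(T^{-1})=h_\mu(T)$.

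The equality characterization is the delicate part. If $\mu$ is $G^-$-invariant then its conditional measures on $G^-$-leaves are proportional to Haar, saturating every inequality above; this is the easy direction. The converse --- equality forces $G^-$-invariance --- is the main obstacle, and is where the full Ledrappier--Young / Margulis--Tomanov machinery is needed. Following that scheme, saturation of the entropy bound forces the conditional measures $\mu_x^{G^-}$ to be absolutely continuous with respect to Haar on each $G^-$-leaf; $T$-equivariance of the disintegration, together with the dilation structure of $T^{-1}$ on $G^-$-leaves, then forces the Radon--Nikodym density to be invariant under iterated dilation by $\mathrm{Ad}_{a^{-1}}|_{\mathfrak g_-}$, hence locally constant, hence $\mu_x^{G^-}$ is Haar. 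Since $\mu$ is $G^-$-invariant if and only if its disintegration along $G^-$-leaves consists of Haar measures, the conclusion follows. The rigorous implementation uses a $G^-$-subordinate $\sigma$-algebra with $T^{-1}\mathcal A\subset\mathcal A$, an Abramov--Rokhlin identity for the conditional entropy, and a martingale convergence argument along $\bigvee_{n\ge 0}T^{-n}\mathcal A$, all carried out in \cite{MT94} and \cite{EL08}, and I would adapt that scheme to the present setting.
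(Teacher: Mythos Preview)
The paper does not prove this theorem: it is stated with attribution to \cite{MT94}, and the surrounding text in Section~\ref{enmea} explicitly describes it as a result being \emph{reviewed}, pointing to \cite{EL08} as a convenient modern reference. So there is no proof in the paper to compare your proposal against.

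Your sketch is a reasonable outline of the standard argument from those references. The reformulation via $T^{-1}$ and the $G^-$-leaf dilation is correct (with the action here being $T^{-1}(xg)=xa\cdot(a^{-1}ga)$, so the leafwise Jacobian is indeed $|\det\mathrm{Ad}_a|_{\mathfrak g_-}|^{-1}$), and the strategy of building a $G^-$-subordinate partition, bounding conditional entropy by the leafwise log-Jacobian, and then upgrading equality to Haar conditionals via the Ledrappier--Young/Margulis--Tomanov mechanism is exactly what \cite{MT94} and \cite{EL08} do. What you have written is an honest outline rather than a proof---the construction of a partition with finite entropy and the martingale/equality-case arguments each require real work---but as a plan it matches the cited literature, which is all the paper itself invokes.
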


A Lie group $G$ which has a lattice as a discrete subgroup is unimodular. This
implies 
$\mathrm{det}(\mathrm{Ad}_g)=1$ for all $g\in G$. Thus
\begin{equation}\label{g-g+}
 -\log\big|\mathrm{det}\,\mathrm{Ad}_a|_{\mathfrak g_-}\big|=
\log\big|\mathrm{det}\,\mathrm{Ad}_a|_{\mathfrak g_+}\big|=nt.
\end{equation}
Since $h_\mu(T)=h_\mu(T^{-1})$, if equality holds in (\ref{entrob}), we will
have a similar equality for $T^{-1}$ which is defined by $a^{-1}$ action. Thus
$\mu$ is invariant under the closed subgroup generated by $G^+$ and $G^-$.
 It is
not hard to see from the definition of $G^+$ and $G^-$ that they are 
$a$-normalized  
subgroups of $G$. Furthermore the closed subgroup generated by them is  normal  since $G$ is connected. 
In the literature,  this subgroup is called the \emph{Auslander normal subgroup}
for the element $a$. In many cases this theorem shows that the Haar measure on $X$ is the unique measure of maximal entropy, e.g.
\begin{corollary}\label{ausl}
Let $\Gamma$ be a lattice of $G$ and $X=\Gamma\backslash G$.
 If the action of the Auslander normal subgroup of $a$ is uniquely ergodic on 
$X$, then
$X$ has a unique measure $m_X$ of maximal entropy under map $T$.
\end{corollary}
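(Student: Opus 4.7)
The plan is to combine Theorem \ref{en-mea} with its analog applied to the map $T^{-1}$, and then invoke the unique ergodicity hypothesis. First I would observe that the Haar measure $m_X$ is itself a measure of maximal entropy for $T$: since $m_X$ is $G$-invariant, it is certainly $T$-invariant and in particular $G^-$-invariant, so the equality case of Theorem \ref{en-mea} together with (\ref{g-g+}) yields $h_{m_X}(T) = nt$. This is exactly the maximal value allowed by (\ref{entrob}), so the existence of a measure of maximal entropy is automatic.

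For uniqueness, let $\mu$ be any $T$-invariant probability measure achieving $h_\mu(T) = nt$. The equality case of Theorem \ref{en-mea} immediately forces $\mu$ to be $G^-$-invariant. The key symmetric step is then to run the same argument with $a$ replaced by $a^{-1}$, i.e.\ to apply Theorem \ref{en-mea} to $T^{-1} = T_{a^{-1}}$. For this I would first check that $a^{-1}$ is again a class $\mathscr A$ element: its $\mathrm{Ad}$-eigenvalues are the reciprocals of those of $a$, so $\mathbb R$-semisimplicity, the uniqueness of the eigenvalue $1$ of absolute value $1$, and the distinct-absolute-value condition all transfer directly. Likewise one sees from the definition that the stable horospherical subgroup of $a^{-1}$ is precisely $G^+$. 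Since $h_\mu(T^{-1}) = h_\mu(T) = nt$, and since by (\ref{g-g+}) applied to $a^{-1}$ the maximal entropy allowed for $T^{-1}$-invariant measures is also $nt$, Theorem \ref{en-mea} now forces $\mu$ to be $G^+$-invariant.

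Consequently $\mu$ is invariant under both $G^-$ and $G^+$, hence under the closed subgroup they generate, which is by definition the Auslander normal subgroup of $a$. The unique ergodicity hypothesis then forces $\mu$ to equal the unique invariant probability measure of that subgroup, which is $m_X$; this gives uniqueness.

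I do not expect a genuine obstacle here: the argument is essentially bookkeeping on top of Theorem \ref{en-mea}. The one point that deserves a moment of care is the symmetric invocation of Theorem \ref{en-mea} for $T^{-1}$, which requires checking that $a^{-1}$ is still class $\mathscr A$, that its stable subgroup is $G^+$, and that the maximal-entropy value is the same $nt$. All three checks are immediate from the setup in Section \ref{homo}, so once they are noted the corollary follows cleanly.
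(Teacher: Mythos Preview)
Your proof is correct and follows essentially the same approach as the paper: the argument is contained in the paragraph immediately preceding the corollary, which uses $h_\mu(T)=h_\mu(T^{-1})$ to apply Theorem \ref{en-mea} to both $T$ and $T^{-1}$, concluding invariance under both $G^-$ and $G^+$, and then invokes unique ergodicity of the Auslander subgroup. You simply spell out a few details the paper leaves implicit (existence of a maximizer, the fact that $a^{-1}$ is again class $\mathscr A$, and that its stable subgroup is $G^+$).
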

\begin{remark}
If the Auslander normal subgroup of $a$ is the whole group $G$, then its action is automatically uniquely ergodic. Hence Corollary \ref{ausl} is true.
\end{remark}

\section{Equidistribution of measures  on homogeneous spaces}
\label{maineq}

In this section notations are the same as in Section \ref{cdact}.
So $G$ is a closed connected linear group with identity $e$, $\Gamma $ is a lattice
of $G$, $X$ is the homogeneous space $\Gamma\backslash G$, and $m_X$ is the probability
Haar measure on $X$. 
%We fix a left invariant metric on $G$ which induces a metric on $X$ as in
%(\ref{metrx}) and we use $B_r^G$ to denote the ball of radius $r$ centered 
%at $e$ in $G$.
Also $a\in G$ is an element of class $
\mathscr A$ and $T=T_a:X\to X$ is the map that sends $x$ to $a.x=xa^{-1}$.
%Recall that $\mathfrak g_+, \mathfrak g_-, \mathfrak g_0$ are Lie algebras of $G^+, G^-, G^0$. We fix some 
%$\alpha>0$ and use $\varphi$ to denote the local diffeomorphism in (\ref{diffeo}). 
Recall that  we assume $G^+$  is abelian and its Lie algebra $\mathfrak g_+$ is an eigenspace of $\mathrm{Ad}_a $ with dimension $n$.
%We fix a basis $\mathbf{e}, \ldots, \mathbf{e}_n$ of $\mathfrak g_+$ and use
 % $t$ to denote the logarithm of the absolute value of the  eigenvalue.
%Recall that $\|\cdot\|$ stands for the sup norm on $\mathbb R^n$.

\subsection{Properties of measures}\label{example}

\begin{definition}\label{wloc-max}
 Suppose $\kappa>0$ and $\mu $ is a Borel probability measure with compact 
support on $X$. We say
 $\mu$ has local dimension $\kappa$ in the unstable horospherical direction if 
there exist   
$s_0>0$ and a finite measure $\lambda$ on $X$ such that  for any  $0<
\tilde s\le s< s_0$, $u\in B_s^+$ with 
$B^+_{\tilde s}+u\subset B^+_s$,  $0<\delta<1$ and $x\in\mathrm{supp}(\mu)$ 
one has \[
         \mu(\exp(B_s^-)\exp(B_s^0)\exp(B^+_{\delta \tilde s}+u).x)
\]
\begin{equation}\label{eloc-max}
          \ll_\kappa \delta^{\kappa}
\lambda(\exp(B_s^-)\exp(B_s^0)\exp(B^+_{\tilde s}+u).x).
        \end{equation}
We will say $\mu$ has local maximal dimension 
in the unstable horospherical direction if there exist $s_0$ and
$\lambda$ as above such that (\ref{eloc-max}) holds
for any $\kappa<n$.
\end{definition}
We say $\mu$ has $s_0$-local  dimension $\kappa$ in the unstable horospherical direction if $s_0$ is known.
 If $s_0<\alpha$ for the   $\alpha$ in  (\ref{diffeo}), then  $\varphi$ can be 
used and  (\ref{eloc-max}) is the same as 
\[
         \mu(\varphi(B_s^-+B_s^0+B^+_{\delta \tilde s}+u).x)
          \ll_\kappa \delta^{\kappa}
\lambda(\varphi(B_s^-+B_s^0+B^+_{\tilde s}+u).x).
        \]

For the fixed basis $\mathbf{e}_1,\ldots,\mathbf{ e}_n$ of $\mathfrak g_+$, we  define a map $\psi:\mathbb R^n\to \mathfrak g_+$ which sends $(x_1,\ldots,x_n)$ to 
$x_1\mathbf{e}_1+\cdots+x_n\mathbf{e}_n$.
 $\psi$ is an isometric isomorphism with respect to the sup norm of $\mathfrak 
g_+$ under the chosen basis.
The composite $\varphi\circ \psi:\mathbb R^n\to G^+\subset G$ is 
a homomorphism of Lie groups as $G^+$ is assumed to be abelian. 
Let us fix some $\mathbf x\in X$ and define $\tau: \mathbb R^n\to X$ that sends
$b\in J$ to $\mathbf x\varphi\circ \psi(b)$. See  Figure \ref{fig6.1} for the relationship of these maps. Recall that
 $B_s(x)$ (or $B_s$ if $x=0$) stands for the ball of 
radius $s$ centered at $x$  in $\mathbb R^n$. 

\begin{figure}
\center
\[
 \xymatrix{
\mathbb R^n \ar[r]^{\tau} \ar[d]^{\psi}  & X\\
\mathfrak g \ar[r]^{\varphi} &G \ar[u]_{\mathbf x\, g}
}
\]
\caption{Relationship of maps}\label{fig6.1}
\end{figure}

\begin{proposition}\label{decompo}
 Let $\mu$ be a Borel probability measure on $J$ with local maximal 
dimension. There exists $s_0>0$ such that  if
$B_\sigma(x)\subset J $ and $\mu(B_\sigma(x))\neq 0$ for some $0<\sigma<s_0$, then $\tau_*\nu$ 
where $\nu=\frac{1}{\mu(B_\sigma(x))}\mu|_{B_\sigma(x)}$
has local maximal dimension in the unstable horospherical direction.
\end{proposition}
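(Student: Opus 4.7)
My approach is to translate the tube-measure inequality (\ref{eloc-max}) for $\tau_\ast\nu$ on $X$ back to the Euclidean ball inequality (\ref{elocmax}) for $\mu$ on $\mathbb{R}^n$, using that $\tau$ is a local group homomorphism onto the orbit $\mathbf{x}G^+$. The crucial geometric fact is that, because $G^+$ is abelian and $\varphi\circ\psi:\mathbb{R}^n\to G^+$ is a Lie group homomorphism, one has $\tau(b)\cdot\exp(\psi(v))=\tau(b+v)$. Hence, once $s_0$ is shrunk below the injectivity radius at $\mathbf{x}$ and below the $\alpha$ of (\ref{diffeo}), the tube $\tau(b)\cdot\exp(B_s^-)\exp(B_s^0)\exp(B_{\delta\tilde s}^++u)$ meets the orbit $\mathbf{x}G^+$ (which contains $\mathrm{supp}(\tau_\ast\nu)$) precisely in $\tau(b)\cdot\exp(B_{\delta\tilde s}^++u)=\tau\bigl(B_{\delta\tilde s}(w)\bigr)$, where $w:=b+\psi^{-1}(u)\in\mathbb{R}^n$ and $\psi$ is an isometry of sup-norms. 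Consequently
\[
 (\tau_\ast\nu)(\text{small tube})=\nu\bigl(B_{\delta\tilde s}(w)\bigr)\le\mu(B_\sigma(x))^{-1}\mu\bigl(B_{\delta\tilde s}(w)\bigr).
\]

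For the comparison measure I would take $\lambda:=\mu(B_\sigma(x))^{-1}\tau_\ast\mu$, which is a finite measure on $X$. By the same reasoning,
\[
 \lambda\bigl(\tau(b)\cdot\exp(B_s^-)\exp(B_s^0)\exp(B_{\tilde s}^++u)\bigr)=\mu(B_\sigma(x))^{-1}\mu\bigl(B_{\tilde s}(w)\bigr).
\]
Given $\kappa<n$, set $\epsilon:=n-\kappa>0$ in Definition \ref{localIn}. Provided $w\in J$ and $\tilde s\le s_0$, one has $\mu(B_{\delta\tilde s}(w))\ll_\kappa\delta^\kappa\mu(B_{\tilde s}(w))$, which combined with the two displays above yields (\ref{eloc-max}).

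The step I expect to be the main obstacle is that $w=b+\psi^{-1}(u)$ need not lie in $J=[0,1]^n$, since $b$ can be close to $\partial B_\sigma(x)$ and $u$ close to $\partial B_s^+$, so (\ref{elocmax}) is not directly available at $w$. The standard way to absorb this is a recentering: if $B_{\delta\tilde s}(w)\cap J\ne\emptyset$, pick $w'\in B_{\delta\tilde s}(w)\cap J$, observe $B_{\delta\tilde s}(w)\subset B_{2\delta\tilde s}(w')$ and $B_{2\tilde s}(w')\subset B_{3\tilde s}(w)$, and apply (\ref{elocmax}) at the interior point $w'$ with radii $2\tilde s$ and $\delta':=\delta$; the numerical factors are absorbed into $\ll_\kappa$ at the price of replacing $\mu(B_{\tilde s}(w))$ by $\mu(B_{3\tilde s}(w))$ on the right. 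The latter can in turn be absorbed by slightly enlarging the domain defining $\lambda$, for instance taking $\lambda=\mu(B_\sigma(x))^{-1}\tau_\ast(\mu|_{B_{\sigma+3s_0}(x)})$. A secondary technical point is the precise quantitative form of the injectivity and local-product claim used in the first paragraph, which follows from Lemma \ref{regular} by shrinking $s_0$ once more; this bookkeeping is carried out in Section~4.1 of the author's thesis \cite{Shi}.
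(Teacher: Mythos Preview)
Your approach is essentially the paper's: both reduce the tube inequality (\ref{eloc-max}) to the Euclidean inequality (\ref{elocmax}) via the homomorphism $\varphi\circ\psi$, and both take $\lambda$ to be (a multiple of) $\tau_\ast\mu$. Two minor points: with the action convention $g.x=xg^{-1}$ the center comes out as $b-\psi^{-1}(u)$ rather than $b+\psi^{-1}(u)$; and the ``main obstacle'' you anticipate is not one---the paper simply records in (\ref{lfe1}) that the inequality of Definition~\ref{localIn} automatically extends from centers in $J$ to all centers in $\mathbb{R}^n$ (your recentering argument is exactly the one-line proof of this), so no modification of $\lambda$ is needed and $\lambda=\tau_\ast\mu$ works directly.
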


\begin{proof}

Suppose $r$ is 
an injectivity radius on $\tau (J)$. We choose some $0<s_0<\alpha/2$ for the $\alpha$ in (\ref{diffeo})  such that
$\mu$ has $s_0$-local maximal 
dimension
and 
\begin{equation}\label{6.1.12}
 \varphi(B_{s_0}^-+ B_{s_0}^0+B_{s_0}^+)\subset B_r^G.
\end{equation}
For $\sigma<s_0$ and $\epsilon>0$, we prove that $\nu$ has $s_0$-local  dimension $n-\epsilon$
in the unstable horospherical direction.
  So it suffices to prove 
\begin{eqnarray}
      & &   \tau_*(\mu|_{B_\sigma(x)})(\varphi(B_s^-+B_s^0+B^+_{\delta \tilde 
s}+u).y)\notag\\
               &\ll_\epsilon& \delta^{n-\epsilon}
\tau_*\mu(\varphi(B_s^-+B_s^0+B^+_{\tilde s}+u).y)\label{prop2}
        \end{eqnarray}
where $ \delta, \tilde s, s,u$ are as in the setting of Definition
\ref{wloc-max} and $y=\tau(b)$ for some $b\in B_\sigma(x)$. 
 
To analyze the $\tau_*(\mu|_{B_\sigma(x)})$ part in (\ref{prop2}), it is convenient to write
\begin{equation}\label{6.1.3a}
 \tau(B_\sigma(x))=\mathbf x\,\varphi\circ \psi(B_\sigma+x)
=y\,\varphi\circ \psi(B_\sigma+x-b).
\end{equation}
Since $b\in B_\sigma+x$, $\sigma<s_0<\alpha/2$ and $\psi $ is an isometry,
\begin{eqnarray}\label{prop3}
 & & \tau(B_\sigma(x))\bigcap \varphi(B_s^-+B_s^0+B^+_{\delta 
\tilde s}+u).y \notag \\
& =&  \varphi(B_\sigma^++\psi(b-x)).y\bigcap \varphi(B_s^-+B_s^0+B^+_{\delta 
\tilde s}+u).y \notag \\
&=& \varphi(B_\sigma^++\psi(b-x)).y\bigcap \varphi(B^+_{\delta 
\tilde s}+u).y\notag \\
&=& y \varphi\circ\psi(B_\sigma +x -b)  \bigcap y \varphi\circ\psi(B_{\delta \tilde s}-c)
\end{eqnarray}
where $c=\psi^{-1}(u)\in \mathbb R^n$. In view of (\ref{prop3}), 
\begin{eqnarray*}
          A_{\delta\tilde s} &
\stackrel{\mathrm{def}}{=} &\tau^{-1}(\varphi(B_s^-+B_s^0+B^+_{\delta 
\tilde 
s}+u).y)\bigcap B_\sigma(x)  
\end{eqnarray*}
consists exactly $z\in B_\sigma(x)$ such that $z-b\in B_{\delta \tilde s}-c $.
So
\begin{eqnarray}\label{6.1.3b}
 A_{\delta\tilde s}= B_{\delta 
\tilde 
s}(b-c)\cap B_\sigma(x)\subset B_{\delta 
\tilde 
s}(b-c).
          \end{eqnarray}
To compute the $\tau_*\mu$ part of (\ref{prop2}), let
\begin{eqnarray}
  A_{\tilde s}&\stackrel{\mathrm{def}}{=}
&\tau^{-1}(\varphi(B_s^-+B_s^0+B^+_{\tilde s}+u).y)
 \supset\tau^{-1}(\varphi\circ\psi (B_{\tilde s}+c).y) \notag \\
&=& \tau^{-1}(\mathbf x
\,\varphi\circ\psi (B_{\tilde s}+b-c))
\supset B_{\tilde s}(b-c).\label{6.1.5}
\end{eqnarray}

Since $\mu$ has local maximal dimension, (\ref{lfe1}) holds, i.e.
\begin{equation}\label{6.1.7}
  \mu(B_{\delta \tilde s}(b-c))\ll_\epsilon \delta^{n-\epsilon}
  \mu(B_{\tilde s}(b-c)).
\end{equation}
By (\ref{6.1.3b}), (\ref{6.1.5}) and (\ref{6.1.7})
\[
 \mu(A_{\delta\tilde s})\le \mu(B_{\delta \tilde s}(b-c))
\ll_\epsilon \delta^{n-\epsilon}
  \mu(B_{\tilde s}(b-c))\le \delta^{n-\epsilon}\mu( A_{\tilde s}).
\]
This completes the proof.
\end{proof}

\subsection{Equidistribution of measures}
\begin{theorem}\label{locdim}
    Let $\mu$ be a Borel probability measure on $X$. Suppose $G^+$
is abelian and $\mathfrak g_+$ is an eigenspace of $\mathrm{Ad}_a$. If $\mu$ has local dimension $\kappa$ in the unstable horospherical direction for some $\kappa>0$ and 
$\rho$ is a limit point of the sequence
$\frac{1}{k}\sum_{l=0}^{k-1}T_\ast^l\mu$ such that $\rho(X)>0$,
then $h_{\nu}(T)\ge \kappa t$ where $\nu=\frac{\rho}{\rho(X)}$.
\end{theorem}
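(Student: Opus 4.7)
The strategy is to construct a finite Borel partition $\mathcal{P}$ of $X$ with small, approximately rectangular atoms and to establish the pointwise upper bound $\nu([\mathcal{P}_n^+]_x) \le C e^{-\kappa t n}$ on atoms of $\mathcal{P}_n^+ := \bigvee_{i=0}^{n-1} T^{-i}\mathcal{P}$. Granting this, the identity $H_\nu(\mathcal{P}_n^+) = \int -\log \nu([\mathcal{P}_n^+]_x)\, d\nu(x)$ immediately yields $h_\nu(T,\mathcal{P}) \ge \kappa t$, hence $h_\nu(T) \ge \kappa t$.

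The first step is geometric. Fix $s \in (0, s_0)$ and a Borel partition $\mathcal{P}$ whose atoms have diameter at most $s$, perturbing boundaries to be null for the measures that arise below. Using Lemma \ref{regular} and Lemma \ref{adjoint}, together with the hypothesis that $\mathfrak{g}_+$ is an eigenspace of $\mathrm{Ad}_a$ with eigenvalue $e^t$, one shows
\[
[\mathcal{P}_n^+]_x \subset \varphi(B_s^- + B_s^0 + B_{e^{-(n-1)t}s}^+).x,
\]
because the binding constraint on the $\mathfrak{g}_+$-component of $h$ (with $y = xh \in [\mathcal{P}_n^+]_x$) comes from demanding $\|\mathrm{Ad}_{a^{n-1}}v^+\|_+ = e^{(n-1)t}\|v^+\|_+ \le s$, while $\mathrm{Ad}_a$ always contracts $\mathfrak{g}_-$.

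The heart of the argument is to transfer the local dimension bound from $\mu$ to $\nu$. For a rectangle $R$ of the form in Definition \ref{wloc-max}, the preimage $T^{-l}R$ is still thin in the unstable direction but stretched in the stable direction; cover $T^{-l}R$ by standard rectangles of stable size at most $s_0$ centered in $\mathrm{supp}(\mu)$, apply Definition \ref{wloc-max} to each, and sum. Because the estimate is in terms of the finite reference measure $\lambda$, the sum telescopes to $(T^l_*\lambda)(R')$ (with $R'$ the thickening of $R$) with no scale-dependent loss, giving $T^l_*\mu(R) \ll_\kappa \delta^\kappa (T^l_*\lambda)(R')$. Averaging gives $\mu_k(R) \ll_\kappa \delta^\kappa \Lambda_k(R')$ with $\Lambda_k = \frac{1}{k}\sum_l T^l_*\lambda$. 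Pass to a subsequence $k_j$ on which both $\mu_{k_j} \to \rho$ and $\Lambda_{k_j} \to \Lambda$ weakly; if $R$ is chosen with $(\rho + \Lambda)(\partial R) = 0$, Lemma \ref{limit} gives $\rho(R) \ll_\kappa \delta^\kappa \Lambda(R') \le \delta^\kappa \lambda(X)$, whence $\nu(R) = \rho(R)/\rho(X) \ll_\kappa \delta^\kappa$. Applying this with $R$ the circumscribing rectangle from the geometric step, i.e.\ $\delta = e^{-(n-1)t}$, gives the desired pointwise bound on $\nu([\mathcal{P}_n^+]_x)$.

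The main obstacle is in the transfer step: the number of standard rectangles required to cover $T^{-l}R$ grows exponentially in $l$ (like $e^{lt'\dim \mathfrak{g}_-}$), so a loss proportional to $\mu$ per rectangle would destroy the $\delta^\kappa$-gain. What rescues the argument is that the local dimension hypothesis bounds $\mu$ against the reference measure $\lambda$ rather than $\mu$ itself, so the sum over the cover collapses cleanly into $\lambda(T^{-l}R')$, uniformly bounded by $\lambda(X) < \infty$. The remaining subtleties---simultaneous null-boundary choice for $\rho$ and $\Lambda$, and rectangles not centered in $\mathrm{supp}(\mu)$---are handled by perturbation and by shifting the center into $\mathrm{supp}(\mu)$.
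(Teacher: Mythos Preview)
Your overall strategy---bound the $\nu$-measure of atoms of $\mathcal{P}_m$ by transferring the local-dimension estimate from $\mu$ through the averages $\mu_k$, using finiteness of the reference measure $\lambda$ to sum the pieces---is the same as the paper's, and your transfer argument is essentially correct in outline. However, there is a genuine gap: you construct a \emph{finite} Borel partition $\mathcal{P}$ whose atoms all have diameter at most $s$. In the intended setting (and in general) $X=\Gamma\backslash G$ is non-compact, so no such partition exists. The paper confronts this directly: one takes a compact set $K$ with $\nu(K)>1-\epsilon^2$, partitions $K$ into small rectangular pieces $P_1,\dots,P_q$, and lets $P_0=X\setminus\bigcup_{i\ge 1} P_i$ be a single ``junk'' atom of unbounded diameter. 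For an atom $Q\in\mathcal{P}_m$ whose itinerary visits $P_0$ at $N$ of the $m$ times, your geometric containment fails; one can only cover $Q$ by at most $2^{Nn}e^{ntN+\epsilon(m-N)n}$ thin tubes (Lemma~\ref{prep2}), so each visit to $P_0$ costs a factor comparable to $e^{nt}$ in the atom count. To control this one invokes the maximal ergodic theorem: since $\nu(P_0)<\epsilon^2$, the set of $x$ with $\sup_{l\ge1}\frac{1}{l}\sum_{i<l}\chi_{P_0}(T^ix)>\epsilon$ has $\nu$-measure $<\epsilon$, hence for atoms carrying most of the $\nu$-mass one has $N\le m\epsilon$, and the degradation is absorbed when $\epsilon\to 0$. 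Without this step the entropy lower bound does not follow for non-compact $X$, which is precisely the case of interest (the theorem explicitly allows $\rho(X)<1$).

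A secondary point: your geometric containment should read $B^+_{e^{-(t-\epsilon)(m-1)}s}$ rather than $B^+_{e^{-(m-1)t}s}$, because comparing tubes based at different points $x_i\in K$ via Lemma~\ref{regular} costs a factor $e^\epsilon$ at each of the $m-1$ steps (this is exactly Lemma~\ref{prep1}). The error is harmless in the limit $\epsilon\to 0$, but it interacts with the $P_0$-bookkeeping above and should be tracked.
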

\begin{proof}
Without loss of generality, we may assume 
$$\nu=
\lim_{k\to \infty}\frac{1}{k}\sum_{l=0}^{k-1}T_\ast^l\mu.$$
We fix some $0<\epsilon<\min\{\frac{t}{2},1\}
$ and will construct a
finite partition $\mathcal P$ 
of $X$ such that 
\begin{equation}\label{equi2}
 h_\nu(T,\mathcal P)\ge \kappa t+f(\epsilon) \quad \mbox{with} \quad
\lim_{\epsilon\to 0}f(\epsilon)=0.
\end{equation}
In view of the definition of $h_\nu(T)$ in (\ref{dentro2}) and (\ref{equi2}),
\[
h_\nu(T)\ge \kappa t+f(\epsilon).
\]
Let $\epsilon \to 0$ and we see $h_\nu(T)\ge \kappa t$ which 
completes the proof. 
 The proof of (\ref{equi2})  is divided into
four steps.

\noindent
\textbf{Step one}: Construction of the partition $\mathcal P$. Fix a
compact set $K\supset \mathrm{supp}(\mu)$ with 
$\nu(K)>1-\epsilon^2$. Choose some positive numbers $r$
and $s_0$ such that $2r$
is an injectivity radius on $K$ and $\mu $ has $s_0$-local  dimension $\kappa$
in the unstable horospherical direction.
By shrinking $r$ and $s_0$ 
 we may require that $(r,s_0)$ is $(t,\epsilon)$-regular as in Definition 
\ref{dregular} and $e^ts_0<\alpha$ for the $\alpha$ in (\ref{diffeo}), so that
(\ref{dreg11}), (\ref{dreg12}) hold and $\varphi$ in (\ref{diffeo}) can be used.
Fix some $0<s<s_0$
such that 
\begin{equation}\label{equi3}
\varphi(\widetilde B)^{-1}\varphi( \widetilde B)\subset B_r^G
\end{equation}
 where 
\begin{equation}\label{equi4}
\widetilde B=B_s^-+B_s^0+B^+_{e^ts}
\subset \mathfrak g.
\end{equation}
Consider the covering of $K$ by sets of the
form $\varphi(B).x$ where $x\in K$ and 
\begin{equation}\label{equi5}
B=B_s^-+B_s^0+B^+_{s}\subset \mathfrak g.
\end{equation}
Since $K$ is compact there exists a finite covering
with
centers $x_i$ for $ 1\le i\le q$.
We may assume that $x_1,\ldots,x_p\in \mathrm{supp}(\mu)$ and 
$\varphi(B).x_i$ for $1\le i\le p$ cover $\mathrm{supp}(\mu)$. 
Furthermore by enlarging $s$ a little bit but still
requiring $s< s_0$ and (\ref{equi3}), we may assume that $\nu(\partial (\varphi(B).x_i))=0$ 
for each $i$. Let
\begin{equation}\label{equi6}
\widetilde P_i=\varphi(B).x_i\quad \mbox{for} \quad 1\le i\le q, \quad  \widetilde
P_0=X-K
\end{equation}
 and 
\begin{equation}\label{equi7}
\widetilde
{\mathcal P}=\{\widetilde P_i: 1\le i\le p\}.
\end{equation}
Note that elements of $\widetilde P$ cover supp$(\mu)$.
 The construction of $\mathcal P$
is as
follows: 
\begin{equation}\label{equi8}
P_1=\widetilde P_1, P_2=\widetilde P_2\backslash P_1, P_3=\widetilde P_3\backslash
(P_1\cup P_2),\ldots, P_q=\widetilde P_q\backslash(\bigcup_{i=1}^{q-1}P_i)
\end{equation}
 and $$P_0=
X\backslash\bigcup_{i=1}^qP_i.$$ 
It follows that 
\begin{equation}\label{equi9}
\nu (P_0)\le \nu(X\backslash K)<\epsilon^2.
\end{equation}
Note that $P_0$ may be an empty set if $X$ is compact but we may
assume
$P_i\ne \emptyset $ for each $1\le i\le q$. We set
\[
 \mathcal P=\{P_0,P_1,\ldots, P_q\}.
\]

\noindent
\textbf{Step two}: General estimate.  Let $$\mathcal P_m=\bigvee_{i=0}^{m-1}T^{-i}
\mathcal P.$$ Then from Definition \ref{dentro}
\begin{equation}\label{equi10}
 h_\nu (T,\mathcal P)=\lim_{m\to \infty}\frac{1}{m}H_\nu (\mathcal P_m)=
\lim_{m\to \infty}
\frac{1}{m}\sum_{Q\in \mathcal P_m}\nu(Q)(-\log\nu( Q)).
\end{equation}
In the above equation the sum runs over all the  nonempty sets of the form  
\begin{equation}\label{equi11}
 Q=Q_0\cap T^{-1}Q_1\cap\cdots\cap
T^{-(m-1)}Q_{m-1}
\quad\mbox{where}\quad  Q_i\in \mathcal P. 
\end{equation}
 Let
\begin{equation}\label{equi12}
 \alpha (Q)={\sup_{1\le l\le m}}\frac{|\{0\le i<l: Q_i=P_0\}|}{l},
\end{equation}
\begin{equation}\label{equi13}
 B_\epsilon=\{x\in X: \sup_{l\ge 1}\frac{1}{l}\sum_{i=0}^{l-1}\chi
_{P_0}(T^i x)>\epsilon\}
\end{equation}
where $\chi_{P_0}$ is the characteristic function of $P_0$.
Then $\alpha(Q)>\epsilon$ implies  $Q\subset B_\epsilon$. By the maximal ergodic
theorem, 
\begin{equation}\label{equi14}
 \epsilon\nu(B_\epsilon)\le \nu (P_0)<\epsilon^2,
\end{equation}
which implies $\nu (B_\epsilon)<\epsilon$. Let
\begin{equation}\label{equi15}
 \mathcal Q_m =\{Q\in \mathcal P_m:\alpha(Q)\le \epsilon\},
\end{equation}
then
\[
 \sum_{Q\notin \mathcal Q_m  }\nu (Q)\le \nu(B_\epsilon)<\epsilon.
\]
Therefore
\begin{equation}\label{equi16}
 \sum_{Q\in\mathcal Q_m }\nu (Q)> 1-\epsilon
\end{equation}
In view of (\ref{equi10}) and (\ref{equi16}), an estimate of $\nu(Q)$ for  $Q\in \mathcal Q_m $ will be enough to prove (\ref{equi2}) and hence the theorem.

\noindent
\textbf{Step three}: Estimate of $\nu (Q)$  for $Q\in \mathcal Q_m $ where $Q$ is in the form
of (\ref{equi11}).
Recall that \[
             Q=Q_0\cap \cdots\cap Q_{m-1}=P_{j_1}\cap \cdots\cap P_{j_{m-1}}
\subset \widetilde P_{j_1}\cap \cdots\cap \widetilde P_{j_{m-1}}
            \]
where $\widetilde P_{j_i}$ is the open subset defined in (\ref{equi6}).
For simplicity of notations we set $\widetilde Q_i=\widetilde P_{j_i}$, so
$\widetilde Q_i=\varphi(B).y_i$ for some $y_i\in K$ if $\widetilde Q_i
\ne \widetilde P_0$. Under these notations
 \[
                                                        Q\subset
\widetilde Q_0\cap\cdots\cap T^{-(m-1)}\widetilde Q_{m-1}
\stackrel{\mathrm{def}}{=}\widetilde Q.
                                                       \]
Since  $\widetilde Q_i$ is open and $\widetilde Q_0\ne \widetilde P_0$,
we  may assume $\widetilde Q=\varphi(U).y_0$ for some open subset $U\subset B$.

Let $N=N(Q)\stackrel{\mathrm{def}}{=}|\{i\in \mathbb Z:0\le i< m,Q_i=P_0\}|$. Since $Q\in \mathcal Q_m $, we have
\begin{equation}\label{equi17}
 N \le m\epsilon.
\end{equation}
Then by Lemma \ref{prep2}, $U$ can be covered by as few as 
\begin{equation}\label{equi18}
2^{Nn}e^{ntN+
\epsilon (m-1-N)n} <2^{Nn} e^{ntN+ mn\epsilon}
\end{equation}
  tube-like sets (see (\ref{condition}) for the precise definition) of 
the form 
\begin{equation}\label{equi181}
 B_s^-+B_s^0+B^+(e^{-(t-\epsilon)(m-1)}s)+u\subset B
\end{equation}
where $u\in B^+_s$.
 Let us fix such a covering $\mathcal
R$ of $U$, then (\ref{equi17}) and (\ref{equi18}) imply
\begin{equation}\label{index}
|\mathcal R| \le 2^{Nn}e^{ntN+ mn\epsilon} \le e^{ntm\epsilon+
m n\epsilon +m  n\epsilon\log 2}=e^{m\epsilon A}
\end{equation}
where $A=nt+n+n\log 2$ is a constant since the system $T:X\to X$ is fixed.
Recall that  $\nu (\partial \widetilde Q)=0$, by Lemma \ref{limit} we have
\begin{eqnarray}
\nu(Q)\le \nu(\widetilde Q)\ll_{\rho(X)}\lim_{k\to \infty}\frac{1}{k}\sum_{l=0}^{k-1}T_\ast^l\mu(\widetilde Q)
=\lim_{k\to \infty}\frac{1}{k}\sum_{l=0}^{k-1}T_\ast^l\mu(\varphi(U).y_0)
 \notag\\
%line 3
\le\limsup_{k\to
\infty}\frac{1}{k}\sum_{l=0}^{k-1}\sum_{R\in \mathcal R}T_\ast^l
\mu(\varphi(R).y_0) 
%line 4
=\limsup_{k\to
\infty}\frac{1}{k}\sum_{l=0}^{k-1}\sum_{R\in \mathcal R}
\mu(T^{-l}(\varphi(R).y_0))\notag
\end{eqnarray}

Recall that the elements of $\widetilde P$ defined in (\ref{equi7}) cover the support
 of $\mu$, so for each $R\in \mathcal R$ we have
\[
 \mu(T^{-l}(\varphi(R).y_0))=
\sum_{P\in \widetilde{ \mathcal P}}\mu(T^{-l}(\varphi(R).y_0)\cap P).
\]
Therefore 
\begin{equation}\label{equi19}
 \nu(Q)\ll_{\rho(X)}\limsup_{k\to
\infty}\frac{1}{k}\sum_{l=0}^{k-1}\sum_{R\in \mathcal R}
\sum_{P\in \widetilde{ \mathcal P}}\mu(T^{-l}(\varphi(R).y_0)\cap P).
\end{equation}
Since $|\widetilde P|$ is fixed and  $|\mathcal R|$ is bounded above efficiently in (\ref{index}),  it suffices to estimate
\begin{equation}\label{equi21}
\mu(T^{-l}(\varphi(R).y_0)\cap P)
\end{equation}
 for each $R$ and $P$.
So let us fix some $R\in \mathcal R$ in the form of (\ref{equi181}) and 
$P=\varphi(B).y\in \widetilde P$ for some $y\in \mathrm{supp} (\mu)$. We first  cover $B$ by tube-like sets
of the form 
\begin{equation}\label{equi22}
  V=B_s^-+B_s^0+B^+(e^{-tl}s )+v\subset B \quad \mbox {where}\quad v\in  B^+_s
\end{equation}
in a way that there are not many (bounded absolutely) overlaps.
For an interval $B_s^{\mathbb R}\subset \mathbb R$, 
 it can be covered by  $\le e^{tl}+1$ intervals of the form
\[
 B_{e^{-tl}s}^{\mathbb R}+u\subset 
B_s^{\mathbb R} \quad\mbox{where}\quad u\in B_s^{\mathbb R}
\]
such that each of them intersects at most $2$ of others. 
Since $B^+_s$ is the same as a direct product of $n$ copies of $B_s^{\mathbb R}$,
 we see that $B^+_s$ can be covered by as few as
\begin{equation}\label{equi23}
  (e^{tl}+1)^n
\end{equation}
sets of the form (\ref{equi22}) and each of them intersects at most $3^n$ elements
including itself. This not many overlapping property later will give
(\ref{equi27}).
Now let us fix a covering $\mathcal E$ of $B$ as above, then
\begin{equation}\label{equi24}
 T^{-l}(\varphi(R).y_0)\cap P \subset \bigcup _{E\in \mathcal E}
 T^{-l}(\varphi(R).y_0)\cap \varphi(E).y.
\end{equation}
Let us fix some $E\in \mathcal E$ in the form of (\ref{equi22}). Then by Lemma
\ref{prep3} and Remark \ref{rprep3},
\begin{equation}\label{equi25}
 T^{-l}(\varphi(R).y_0)\cap \varphi(E).y\subset \varphi(W).y
\end{equation}
where $W$ is a tube-like set  of the form
\begin{equation}\label{equiw}
  B_s^-+B_s^0+B^+(e^{-t(m-1+l)+m\epsilon}s )+w\subset E\subset B
\end{equation}
with $w\in B^+_s\cap E$.

By assumption $\mu$ has local  dimension $\kappa$ in the unstable horospherical dimension, so
according to Definition \ref{wloc-max} there exists a finite measure
$\lambda$ on $X$ such that  
\[
\mu(\varphi(W).y)
=\mu( \varphi( B_s^-+B_s^0+B^+(e^{-t(m-1+l)+m\epsilon}s)+w).y)
\]
\begin{equation}\label{equi26}
 \ll_\kappa (e^{-t(m-1)+m\epsilon })^{\kappa}
\lambda(\varphi(B_s^-+B_s^0+B^+(e^{-tl}s )+w).y).
\end{equation}

Strictly speaking $w$ in (\ref{equiw})
 depends on $R$, $E$ and $P$, but we will index it by $E$ 
for simplicity since we are trying to estimate (\ref{equi21}) where $R$ and $P$ are fixed.
As the multiplicity of the intersections of the sets in $\mathcal E$ are bounded
by $3^n$, we have
\begin{equation}\label{equi27}
\sum_{E\in \mathcal E}\lambda (\varphi(
B_s^-+B_s^0+B^+(e^{-tl}s)+w_E).y)\ll 1.
\end{equation}
Now combining (\ref{equi24}), (\ref{equi25}), (\ref{equi26}) and
(\ref{equi27}) we have
\begin{eqnarray}\label{equi28}
\mu(T^{-l}(\varphi(R).y_0)\cap P)
&\le & \sum_{E\in \mathcal E}\mu(\varphi(R\cap E).y_0\cap \varphi(E).y)\notag\\
%line 3
& \ll_{\kappa} & e^{(-t(m-1)+m\epsilon )\kappa}.
\end{eqnarray}
 By (\ref{index}), (\ref{equi19}) and (\ref{equi28}),
\begin{eqnarray}\label{locdim1}
 \nu(Q)&\ll_{\kappa,\rho(X)}& |\widetilde {\mathcal  P}||\mathcal R|
\exp\big({(-t(m-1)+m\epsilon )\kappa} \big)\notag \\
&=&  |\widetilde {\mathcal  P}|
\exp(-m\kappa t+m\epsilon \kappa +m\epsilon A+\kappa t).
%&\ll_{\mathcal P}&\exp\left(-ntm+2mn\epsilon+2ntm\epsilon+mn\epsilon\log 2\right).
\end{eqnarray}
Since $t$ and $\kappa$ are fixed, $e^{\kappa t}$ is a constant. We are 
trying to estimate $h_\nu(T,\mathcal P)$, so the number $|\widetilde P|$ 
determined by $\mathcal P$ is fixed.
Therefore,
\begin{equation}\label{equi29}
 \nu(Q)\ll_{\kappa,\rho(X), \mathcal P}
\exp\big(m(-\kappa t+\epsilon \kappa+\epsilon A)\big)
\end{equation}
for any $Q\in \mathcal Q_m $.

\noindent
\textbf{Step four}: Conclusion.
With the results of step three we can complete the estimate of $h_\nu(T,\mathcal P)$. By (\ref{equi10}) and (\ref{equi15})
\begin{equation}\label{equi30}
h_\nu(T,\mathcal P)\ge \liminf_{m\to \infty}
 \frac{1}{m}\sum_{Q\in \mathcal Q_m }\nu(Q)(-\log\nu( Q)).
\end{equation}
By (\ref{equi29}), 
\begin{equation}\label{5.394}
-\log \nu(Q)\ge
m(\kappa t-\epsilon \kappa-\epsilon A) +M
\end{equation}
for some constant $M$ depending on $\kappa$, $\rho(X)$ and $\mathcal 
P$.
In view of (\ref{equi30}) and (\ref{5.394}),
\begin{eqnarray*}
 h_\nu(T,\mathcal P)&\ge&
(\kappa t-\epsilon \kappa-\epsilon A)\liminf_{m\to\infty}\sum_{Q\in \mathcal Q_m }\nu(Q) \\
&\ge&
(\kappa t-\epsilon \kappa-\epsilon A)(1-\epsilon)
\end{eqnarray*}
where the last inequality follows from (\ref{equi16}).
Note that \[
       \lim_{\epsilon\to 0}(-\epsilon \kappa-\epsilon A)(1-\epsilon)
-\kappa t\epsilon=0.
      \]
This establishes  (\ref{equi2}) hence the theorem.
\end{proof}
\begin{theorem}\label{wequi}
   Let $\mu$ be a Borel probability measure on $X$. Suppose $G^+$
is abelian, $\mathfrak g_+$ is an eigenspace of $\mathrm{Ad}_a$, and  the action of the Auslander normal subgroup is uniquely 
ergodic. If  $\mu$ has
local maximal dimension in the unstable horospherical direction
and there is no loss of mass  on average with respect to $T$, then
\[
 \lim_{k\to \infty}\frac{1}{k}\sum_{l=0}^{k-1}T_\ast^l\mu =m_X.
\]
\end{theorem}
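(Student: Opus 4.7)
The plan is to show that every weak$^*$ limit point $\nu$ of the Ces\`aro averages $\mu_k = \frac{1}{k}\sum_{l=0}^{k-1} T_\ast^l \mu$ coincides with the Haar measure $m_X$, and then conclude $\mu_k \to m_X$ by tightness.

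Fix such a limit point $\nu$. Since there is no loss of mass on average, $\nu(X) = 1$, so $\nu$ is a probability measure and we are in the regime $\rho(X) > 0$ required by Theorem \ref{locdim}. Moreover $\nu$ is $T$-invariant, a standard fact about weak$^*$ limits of Ces\`aro averages under the continuous map $T$. Next I would apply Theorem \ref{locdim} to bound the entropy from below. Because $\mu$ has local maximal dimension in the unstable horospherical direction, Definition \ref{wloc-max} supplies a single $s_0 > 0$ and reference measure $\lambda$ such that the inequality (\ref{eloc-max}) holds for every $\kappa < n$. Theorem \ref{locdim} then yields $h_\nu(T) \ge \kappa t$ for every $\kappa < n$, so $h_\nu(T) \ge n t$. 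Theorem \ref{en-mea} combined with (\ref{g-g+}) gives the matching upper bound $h_\nu(T) \le n t$, hence equality.

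By the equality case of Theorem \ref{en-mea}, $\nu$ is $G^-$-invariant. Applying the same argument to $T^{-1}$, using $h_\nu(T^{-1}) = h_\nu(T) = n t$ together with the symmetric upper bound afforded by (\ref{g-g+}), shows that $\nu$ is also $G^+$-invariant. Thus $\nu$ is invariant under the closed subgroup generated by $G^+$ and $G^-$, which is precisely the Auslander normal subgroup of $a$; by the unique ergodicity hypothesis, $\nu = m_X$. Since every weak$^*$ limit point of $\{\mu_k\}$ equals $m_X$ and the sequence $\{\mu_k\}$ is tight by non-escape of mass, the whole sequence converges to $m_X$. The main step that requires care is the entropy chain: matching the lower bound from Theorem \ref{locdim}, taken over all $\kappa < n$, against the Margulis--Tomanov upper bound in order to force the maximal-entropy case; once this is in place, the measure rigidity supplied by the classification of maximal-entropy measures together with the unique ergodicity of the Auslander subgroup finishes the argument.
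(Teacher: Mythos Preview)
Your proof is correct and follows essentially the same route as the paper: take a weak$^*$ limit point, use non-escape of mass to get a probability measure, apply Theorem \ref{locdim} for all $\kappa<n$ to force $h_\nu(T)=nt$, and then invoke the maximal-entropy rigidity. The only cosmetic difference is that you unpack the conclusion $\nu=m_X$ by explicitly deducing $G^-$- and $G^+$-invariance and then appealing to unique ergodicity, whereas the paper packages this step as Corollary \ref{ausl}; you also make the tightness step explicit, which the paper leaves implicit.
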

\begin{proof}
   Let $\nu $ be a limit  point of  the sequence
$
\frac{1}{k}\sum_{l=0}^{k-1}T_\ast^l\mu
$ under the weak$^\ast$ topology. The assumption about no loss of mass
implies that $\nu$ is a  probability measure on $X$. 
According to Definition \ref{wloc-max} and the assumption about the measure,
$\mu$ has local dimension $\kappa$ in the unstable horospherical direction for any $\kappa<n$.
Therefore 
 Theorem \ref{locdim} implies  $h_\nu(T)\ge \kappa t$ for any $\kappa<n$.
So 
$
 h_\nu(T)\ge nt.
$
On the other hand,
Theorem \ref{en-mea} and (\ref{g-g+}) imply
$
 h_{\nu}(T)\le nt.
$
Thus $h_{\nu}(T)= nt$.
By assumption, the action of the Auslander normal subgroup is uniquely ergodic, so $\nu=m_X$ by Corollary \ref{ausl}.  
\end{proof}

\begin{remark} The no loss of mass assumption is superfluous
in many cases, see Corollary \ref{cfrie}. 
\end{remark}

\subsection{Proof of lemmas}

In this section we are going to prove the lemmas that are  used in the proof of Theorem 
\ref{locdim}.
Before doing this, let us fix some notations according to 
the construction of $\mathcal P$ in step one of the proof. Suppose $0<
\epsilon < \min\{\frac{t}{2},1\}$,
 $K$ is a compact subset 
of $X$ and $2r$ be an injectivity radius on $K$. 
Let $0<s<e^{-t}\alpha$
for the $\alpha $ in (\ref{diffeo}) and set
  $
\widetilde B= B_s^-+B_s^0+B^+_{e^t s}$, $B=
 B_s^-+B_s^0+B^+_{ s}$ so that $\varphi$ can be used for elements of $\widetilde 
B$.  
An element of $B$ is usually represented by 
$u^-+u^0+u^+$ where $u^-\in \mathfrak g_- , u^0\in \mathfrak g_0$, and
$u^+\in \mathfrak g_+$. This will
be referred to as the standard representation of  elements in
$\mathfrak g$.  
We also assume
\begin{equation}\label{tilde}
\varphi(\widetilde B)^{-1}\varphi(\widetilde B)
\subset B_r^G
\end{equation}
and 
$(r,s)$ is
$(t,\epsilon)$-regular so that (\ref{dreg11}) and (\ref{dreg12}) hold.
 An open
subset of $B$ is called \emph{tube-like} if it is of the form 
$
 B_s^-+B_s^0+B^+_{\tilde s}+u
$
where  
\begin{equation}
 \label{condition}
u\in \mathfrak g_+ \quad \mbox{and}\quad
B^+_{\tilde s}+u\subset B_s^+.
\end{equation}
For $g,h\in G$, we use $\eta_g(h)$ to denote $ghg^{-1}$. In the proof of the  following lemmas the assumption $G^+$ is abelian is used.
\begin{lemma}\label{prep3}
Suppose $m\ge 0$, $l\ge 1$ and we have  tube-like sets
\begin{equation}\label{prep3.1}
 V=B_s^-+B_s^0+B^+(e^{-(t-\epsilon)m}s)+v
\end{equation}
\begin{equation}\label{prep3.2}
 W=B_s^-+B_s^0+B^+(e^{-t(l-1)}s)+w.
\end{equation}
Then for any $x,y\in K$, 
\begin{equation}\label{prep3.a1}
T^{-l}(\varphi(V).x)\cap \varphi(W).y=\varphi(U).y
\end{equation}
 where $U$ $($possibly empty$)$ is contained in a tube-like set of the form  
\begin{equation}\label{prep3.3}
 B_s^-+B_s^0+B^+(e^{-t(m+l)+(m+1)\epsilon}s)+u\subset W.
\end{equation}
\end{lemma}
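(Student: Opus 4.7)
The plan is to analyze the intersection in local coordinates near a reference point and extract the $\mathfrak g_+$-constraint via the $(t,\epsilon)$-regularity of Lemma~\ref{regular}. If the intersection is empty, take $U=\emptyset$; otherwise fix $z_0$ in the intersection. Using the injectivity radius on $K$ (and the hypothesis $\varphi(\widetilde B)^{-1}\varphi(\widetilde B)\subset B_r^G$), we may lift $z_0$ to $G$ in two compatible ways, $z_0=y\varphi(w_0)=x\varphi(v_0)a^l$ with $w_0\in W$ and $v_0\in V$. Any nearby $z=z_0 g$ then lies in the intersection exactly when $g=\varphi(w_0)^{-1}\varphi(w)$ for a unique $w\in W$ and simultaneously $g=a^{-l}\varphi(v_0)^{-1}\varphi(v)a^l$ for some $v\in V$. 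Writing $w$ in the standard decomposition as $w^-+w^0+(w^++w_c^W)$, the inclusion $W\subset B$ gives $w^-\in B_s^-$ and $w^0\in B_s^0$ automatically, so the entire content of the lemma is a sharp bound on $w^+$.

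To bound $w^+$ I would proceed in two stages. Since $\varphi(v_0)^{-1}\varphi(v)\in\varphi(B)^{-1}\varphi(B)\subset B_r^G\subset\widetilde G$, it admits a $\varphi$-decomposition, so $\pi(\varphi(v_0)^{-1}\varphi(v))$ is well-defined; and $\mathrm{Ad}_{a^{-l}}$ acts on $\mathfrak g_+$ as multiplication by an element of absolute value $e^{-lt}$, giving
\begin{equation*}
 \|\pi(g)\|_+ \;=\; e^{-lt}\,\|\pi(\varphi(v_0)^{-1}\varphi(v))\|_+,
\end{equation*}
provided the conjugate stays in $\widetilde G$, which can be arranged by the choice of $s$. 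Applying the right-multiplication Lipschitz bound (\ref{dreg12}) on the $V$-side bounds $\|\pi(\varphi(v_0)^{-1}\varphi(v))\|_+$ by $e^{\epsilon}\|v^+-v_0^+\|_+\le 2e^{\epsilon}\cdot e^{-(t-\epsilon)m}s$, since $v^+,v_0^+$ lie in a $+$-tube of radius $e^{-(t-\epsilon)m}s$. A symmetric application of (\ref{dreg12}) on the $W$-side converts $\|\pi(g)\|_+$ into a bound on $\|w^+-w_0^+\|_+$ at the cost of one more factor $e^{\epsilon}$. Multiplying everything out yields $\|w^+-w_0^+\|_+\le e^{-t(m+l)+(m+1)\epsilon}s$ (after absorbing a harmless multiplicative constant by a mild shrinking of $s$), so $U$ fits inside a tube of the stated form with center $u=w_0^++w_c^W\in B_s^+$, and automatically $U\subset W$ since we parametrized over $w\in W$ throughout.

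The main obstacle is the Lipschitz bookkeeping in a non-abelian group: (\ref{dreg12}) controls only right multiplications, so one has to choose the pair $h_1,h_2$ and the test element $g$ cleverly so that $\pi(\varphi(v_0)^{-1}\varphi(v))$ (rather than the unequal $\pi(\varphi(v)\varphi(v_0)^{-1})$) appears on the right-hand side, and similarly on the $W$-side. The hypotheses that $G^+$ is abelian and $\mathfrak g_+$ is a single eigenspace of $\mathrm{Ad}_a$ are what allow conjugation by $a^l$ to act as a clean homothety on the $\pi$-component and keep the final exponent exactly $(m+1)\epsilon$ rather than something larger.
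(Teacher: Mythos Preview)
Your overall strategy is right---lift to $G$, use the $\mathrm{Ad}_{a^l}$-homothety on $\mathfrak g_+$, and control the cross terms by $(t,\epsilon)$-regularity---but the step you yourself flag as ``the main obstacle'' is not actually carried out, and with the natural choices it does not close. Concretely: from $z=z_0g$ you have $g=\varphi(w_0)^{-1}\varphi(w)$, so on the $W$-side you must pass from $\|\pi(g)\|_+=\|\pi(\varphi(w_0)^{-1}\varphi(w))\|_+$ to $\|w^+-w_0^+\|_+=\|\pi(\varphi(w))-\pi(\varphi(w_0))\|_+$. Inequality~(\ref{dreg12}) with $h_1=\varphi(w)$, $h_2=\varphi(w_0)$ and right-multiplier $\varphi(w_0)^{-1}$ produces $\pi(\varphi(w)\varphi(w_0)^{-1})$, not $\pi(\varphi(w_0)^{-1}\varphi(w))$; the same left/right mismatch occurs on the $V$-side. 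Even granting both applications, you pick up $e^{\epsilon}$ twice, giving exponent $(m+2)\epsilon$ and an extra factor of $2$; the suggestion to ``absorb a harmless constant by a mild shrinking of $s$'' is not legitimate here, because $s$ is fixed once and for all in Step~one of Theorem~\ref{locdim} before this lemma is ever invoked, and the exponent $(m+1)\epsilon$ is exactly what makes the induction in Lemma~\ref{prep1} close (applying the lemma with $m\!-\!1$ and $l=1$ must return radius $e^{-(t-\epsilon)m}s$).

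The paper avoids both issues by comparing two arbitrary elements $g,h\in\varphi(U)$ directly, not each against a base point, and by exploiting that $G^+$ is abelian in a sharper way than you do. Set $c=\exp(w)\in G^+$; since $\varphi(W)\subset\exp(B_s^-)\exp(B_s^0)G^+$ and $G^+$ is abelian, right multiplication by $c^{-1}$ shifts $\pi$ by a constant, so $\pi(gc^{-1})-\pi(hc^{-1})=\pi(g)-\pi(h)$ \emph{exactly}, with no Lipschitz loss. Conjugation by $a^l$ then scales this difference by $e^{tl}$ exactly. Finally one checks that $a^lc.y=k.x$ for some $k\in B_r^G$, so $a^lgc^{-1}a^{-l}k$ and $a^lhc^{-1}a^{-l}k$ both lie in $\varphi(V)$; a \emph{single} application of~(\ref{dreg12}) with this $k$ yields $\|\pi(g)-\pi(h)\|_+\le 2e^{-t(m+l)+(m+1)\epsilon}s$, i.e.\ a tube of radius $e^{-t(m+l)+(m+1)\epsilon}s$.
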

\begin{remark}\label{rprep3}
  It is obvious that the conclusion is still valid if we replace $W$ by 
\[
 W_1=B_s^-+B_s^0+B^+(e^{-tl}s )+w\subset W.
\]
\end{remark}

\begin{proof}
We may assume $T^{-l}(\varphi(V).x)\cap \varphi(W).y\ne \emptyset$, otherwise the conclusion is trivial. 
Let $g\in \varphi (U)\subset \varphi(W)$. Suppose $c=\exp (w)$, then from the shape of $W$ in 
(\ref{prep3.2}) we have 
\begin{equation}\label{prep3.4}
 \pi(gc^{-1})\in B^+({e^{-t (l-1) }s}).
\end{equation}
So $gc^{-1}=\varphi(w^-+w^0+w^+)$ where $w^-+w^0+w^+$ is the standard representation for $\varphi^{-1}(gc^{-1})$ and $w^+\in B^+(e^{-t(l-1)}s)
.$ Thus \begin{eqnarray}
         a^lgc^{-1}a^{-l} & = &\eta_{a^l}(\varphi(w^-+w^0+w^+)) \notag\\
% line 2
&=&\eta_{a^l}(\exp w^-)\eta_{a^l}(\exp w^0)\eta_{a^l}(\exp w^-)
\notag \\
& = & \varphi(\mathrm{Ad}_{a^l}(w^-)+w^0+\mathrm{Ad}_{a^l}(w^+))
\label{prep3.5}
        \end{eqnarray}
Therefore,
\begin{equation}\label{5.3t.0}
 a^lg.y=(a^lgc^{-1}a^{-l})a^lc.y=
 \varphi(\mathrm{Ad}_{a^l}(w^-)+w^0+\mathrm{Ad}_{a^l}(w^+))a^lc.y
.
\end{equation}
According to Lemma \ref{adjoint}, $\mathrm{Ad}_{a^l}(w^-)\in B^-_{s}$ and 
$\mathrm{Ad}_{a^l}(w^+)\in B_{e^ts}^+$. So 
\begin{equation}\label{prep3.6}
 \mathrm{Ad}_{a^l}(w^-)+w^0+\mathrm{Ad}_{a^l}(w^+)\in \widetilde B.
\end{equation}
%By (\ref{tilde}), (\ref{prep3.5}) and (\ref{prep3.6}),
%\begin{equation}\label{5.2t.g}
% a^lgc^{-1}a^{-l}\in B^G_r.
%\end{equation}
Since $g\in \varphi(U)$, 
 (\ref{prep3.a1}) implies $g.y\in T^{-l}(\varphi(V).x)$. So
\begin{equation}\label{prep3.7}
 a^lg.y=\varphi( \tilde v ).x \quad  \mbox{for some }  \tilde v \in V\subset 
B\subset\widetilde B.
\end{equation}
%(\ref{prep3.5}) and (\ref{prep3.7}) implies
From the two expressions of $a^lg.y$ in (\ref{5.3t.0}) and (\ref{prep3.7}), we have
 \begin{equation}\label{prep3.8}
    a^lc.y= \varphi(\mathrm{Ad}_{a^l}(w^-)+w^0+\mathrm{Ad}_{a^l}(w^+))^{-1}
\varphi( \tilde v ).x.
   \end{equation}
By (\ref{tilde}), (\ref{prep3.6})  and (\ref{prep3.7}), 
\begin{equation}\label{prep3.89}
 \varphi(\mathrm{Ad}_{a^l}(w^-)+w^0+\mathrm{Ad}_{a^l}(w^+))^{-1}
\varphi( \tilde v )\in B^G_r.
\end{equation}
%In view of (\ref{prep3.8}) and (\ref{prep3.89})
Therefore,
\begin{equation*}
 a^lc.y=k.x \quad \mbox{for some }k\in B^G_r
\end{equation*}
and
\begin{equation}\label{5.2t.1}
a^lg.y=a^lgc^{-1}a^{-l}(a^lc.x)=a^lgc^{-1}a^{-l}k.x.
\end{equation}
Let $h\in\varphi(U)$ be another element, then 
\begin{equation}\label{5.2t.2}
 a^lh.y=a^lhc^{-1}a^{-l}(a^lc.x)=a^lhc^{-1}a^{-l}k.x.
\end{equation}
By (\ref{tilde}), (\ref{prep3.5}), (\ref{prep3.6}) and similar results for $h$,
\begin{equation}\label{5.2t.h}
 a^lgc^{-1}a^{-l}, a^lhc^{-1}a^{-l}\in B^G_r.
\end{equation}
Since $(r,s)$ is $(t,\epsilon)$-regular (see Definition \ref{dregular}), we have
\begin{equation}\label{prep3.9}
 \|\pi(a^lgc^{-1}a^{-l})-\pi(a^lhc^{-1}a^{-l})\|_+ \le
e^\epsilon  \|\pi(a^lgc^{-1}a^{-l}k)-\pi(a^lhc^{-1}a^{-l}k)\|_+.
\end{equation}
In view of  (\ref{5.2t.h}) and the fact $k\in B_r^G$,
 $$a^lgc^{-1}a^{-l}k, a^lhc^{-1}a^{-l}k\in B^G_{2r}.$$
Recall that $
                         a^lg.y, a^lh.y\in \varphi(V).x
                        $
and $2r$ is an injectivity radius of $x$, so \ref{5.2t.1}
and \ref{5.2t.2} imply   $a^lgc^{-1}a^{-l}k, a^lhc^{-1}a^{-l}k\in \varphi(V)$.
According to the shape of $V$ in (\ref{prep3.1}), we have
\begin{equation}\label{5.2t.3}
 \|\pi(a^lgc^{-1}a^{-l}k)-\pi(a^lhc^{-1}a^{-l}k)\|_+ 
\le 2 e^{-(t-\epsilon)m}s.
\end{equation}
By (\ref{prep3.9}), (\ref{5.2t.3}) and Lemma \ref{adjoint},
\begin{eqnarray*}
 \|\pi(g)-\pi(h)\|_+ & = & \|\pi(gc^{-1})-\pi(hc^{-1})\|_+ \\
%line 2
& \le & e^{-tl}\|\pi(a^lgc^{-1}a^{-l})-\pi(a^lhc^{-1}a^{-l})\|_+ \\
%line 3
& \le & e^{-tl} e^\epsilon\|\pi(a^lgc^{-1}a^{-l}k)-\pi(a^lhc^{-1}a^{-l}k)\|_+\\
%line 4
& \le & e^{-tl} e^\epsilon 2 e^{-(t-\epsilon)m}s \\
%final line
& = &  2 e^{-t(m+l)+(m+1)\epsilon}s.
\end{eqnarray*}
\end{proof}

\begin{lemma}\label{prep1}
 Suppose $Q_i=\varphi(B). x_i$ for $x_i\in K$ and $0\le i\le m$, which are open
subsets of $X$. If $Q=Q_0\cap
T^{-1}Q_1\cap\cdots\cap
T^{-m}Q_{m}$, then $Q\subset \varphi(U).x_0$ for some tube-like set $U
\subset B$ 
of the form 
$$ B_s^-+B_s^0+B^+(e^{-(t-\epsilon)m}s)+u.$$
\end{lemma}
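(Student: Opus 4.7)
The plan is to argue by induction on $m$, using Lemma \ref{prep3} (applied with $l=1$) as the single-step contraction that drives the induction. The base case $m=0$ is immediate: then $Q=Q_0=\varphi(B).x_0$, and one may take $U=B=B_s^-+B_s^0+B^+(e^{-(t-\epsilon)\cdot 0}s)+0$, which is tube-like of the required form.

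For the inductive step I would decompose
\[
Q \;=\; Q_0 \;\cap\; T^{-1}R', \qquad R' \;=\; Q_1 \cap T^{-1}Q_2 \cap \cdots \cap T^{-(m-1)}Q_m.
\]
The set $R'$ is an intersection of exactly the same form as the one treated by the lemma, but of length $m-1$ and starting at $x_1$ rather than $x_0$, so the inductive hypothesis produces a tube-like $U' \subset B$ of the shape $B_s^- + B_s^0 + B^+\bigl(e^{-(t-\epsilon)(m-1)}s\bigr) + u'$ with $R' \subset \varphi(U').x_1$. Therefore
\[
Q \;\subset\; \varphi(B).x_0 \;\cap\; T^{-1}\bigl(\varphi(U').x_1\bigr).
\]
Now I apply Lemma \ref{prep3} with $l=1$, with its parameter ``$m$'' set to $m-1$, and with $V = U'$, $W = B$, $x = x_1$, $y = x_0$. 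The set $U'$ matches the hypothesis on $V$ for parameter $m-1$, and $W = B = B_s^- + B_s^0 + B^+(s)$ matches the hypothesis on $W$ for $l=1$ since $B^+\bigl(e^{-t(l-1)}s\bigr) = B^+(s)$. The lemma then produces a tube-like $U$ of shape
\[
B_s^- + B_s^0 + B^+\bigl(e^{-t((m-1)+1) + ((m-1)+1)\epsilon}s\bigr) + u \;=\; B_s^- + B_s^0 + B^+\bigl(e^{-(t-\epsilon)m}s\bigr) + u,
\]
together with the equality $T^{-1}(\varphi(U').x_1) \cap \varphi(B).x_0 = \varphi(U).x_0$, which yields $Q \subset \varphi(U).x_0$ and closes the induction.

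I do not expect a substantive obstacle: Lemma \ref{prep3} is by design the one-step version that this induction iterates, and the arithmetic identity $-t((m-1)+1) + ((m-1)+1)\epsilon = -(t-\epsilon)m$ ensures that each application of $T^{-1}$ contracts the $\mathfrak{g}_+$-radius by exactly a factor of $e^{-(t-\epsilon)}$, compounding to the required $e^{-(t-\epsilon)m}$ after $m$ steps. The only real bookkeeping is to verify that the shifted intersection $R'$ genuinely satisfies the hypotheses of the lemma with parameter $m-1$ (it does, since it has the same form with $x_i$ replaced by $x_{i+1}$), and that the shape of $U'$ coming from the inductive hypothesis matches exactly the shape of the input $V$ demanded by Lemma \ref{prep3}.
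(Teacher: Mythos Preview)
Your proof is correct and follows essentially the same approach as the paper: induction on $m$, with the inductive step handled by applying Lemma~\ref{prep3} with $l=1$ and its parameter ``$m$'' set to $m-1$, to the intersection of $Q_0$ with $T^{-1}$ of the tube obtained from the inductive hypothesis. The only cosmetic point is that Lemma~\ref{prep3} literally asserts that the intersection equals $\varphi(U).y$ for some $U$ \emph{contained in} a tube-like set of the stated form, so strictly speaking one takes that enclosing tube as the $U$ in the conclusion---but this is exactly what the paper does as well.
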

\begin{proof}
 The lemma is proved by induction on $m$. If $m=0$, then $Q_0=
\varphi(B).x_0$ and the lemma is true in this case.

Now assume the lemma is true for $m-1$, then we may assume
\[
 Q_1\cap T^{-1}Q_2\cap\cdots\cap T^{-(m-1)}Q_m\subset \varphi(V).x
\]
where 
\begin{equation}\label{prep1.a}
 V=B_s^-+B_s^0+B^+(e^{-(t-\epsilon)(m-1)}s)+v
\end{equation}
is a tube-like set of $B$. 
It follows from Lemma \ref{prep3} ($m$ and $l$  there equal $m-1$ and $1$)  that 
$$Q\subset Q_0\cap T^{-1}(\varphi(V).x)\subset \varphi(U).x_0
$$
for some tube-like set 
\[
 U=B_s^-+B_s^0+B^+(e^{-tm+m\epsilon}s )+u.
\]

\end{proof}

\begin{lemma}\label{prep2}
 Let $\mathcal N$ be a subset of $\{1,\ldots, m\}$ with $N$ elements, $Q_i$ be an 
open subset of $X$ for $0\le i\le m$ such that $Q_i=\varphi(B).x_i$ for some $x_i\in K$
if $i\not \in \mathcal N$. Let
\[
 Q=Q_0\cap
T^{-1}Q_1\cap\cdots\cap
T^{-m}Q_{m}=\varphi(U).x_0
\]
for some open subset $U$ of $B$. Then $U$ can be covered by as few as 
\begin{equation}\label{prep2.0}
 2^{Nn}e^{ntN+\epsilon (m-N)n}
\end{equation}
tube-like sets of the form
\[
 B_s^-+B_s^0+B^+(e^{-(t-\epsilon)m}s)+u
\]
where $u\in B^+_s$.
\end{lemma}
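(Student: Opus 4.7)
The plan is to prove the lemma by strong induction on $N = |\mathcal{N}|$, peeling off the \emph{smallest} bad index at each step. The base case $N = 0$ is immediate from Lemma \ref{prep1}: $U$ is contained in a single tube-like set of $+$-radius $e^{-(t-\epsilon)m}s$, and the claimed bound $2^{0} e^{\epsilon mn} \ge 1$ holds trivially.

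For the inductive step, let $n_1 = \min \mathcal{N}\ge 1$ and let $k \ge 1$ be the length of the maximal run of consecutive bad indices starting at $n_1$. Assume first that $n_1 + k \le m$, so $n_1 + k \notin \mathcal{N}$. I split $Q$ as the intersection of (i) the initial good block $Q_0 \cap T^{-1}Q_1 \cap \cdots \cap T^{-(n_1-1)}Q_{n_1-1}$, (ii) the bad run $T^{-n_1}Q_{n_1} \cap \cdots \cap T^{-(n_1+k-1)}Q_{n_1+k-1}$, and (iii) the tail $T^{-(n_1+k)}Q_{n_1+k} \cap \cdots \cap T^{-m}Q_m$. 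Lemma \ref{prep1} applied to (i) gives containment in $\varphi(V_0).x_0$ with $V_0$ tube-like of $+$-radius $e^{-(t-\epsilon)(n_1-1)}s$. The inductive hypothesis applied to the time-shifted version of (iii) — a sequence of length $m - n_1 - k + 1$ starting with a good index and containing $N - k$ bad indices — gives containment in $\varphi(U'').x_{n_1+k}$, with $U''$ covered by at most $2^{(N-k)n} e^{nt(N-k) + \epsilon n(m - n_1 - N)}$ tubes $\tilde V$ of $+$-radius $e^{-(t-\epsilon)(m - n_1 - k)}s$. Discarding (ii), one has $Q \subset \varphi(V_0).x_0 \cap T^{-(n_1+k)}(\varphi(U'').x_{n_1+k})$.

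The critical combining step applies Lemma \ref{prep3} with $l = n_1 + k$ and $m_{\mathrm{lem}} = m - n_1 - k$. Since $V_0$'s $+$-radius $e^{-(t-\epsilon)(n_1-1)}s$ exceeds the required $e^{-t(l-1)}s = e^{-t(n_1+k-1)}s$, I first cover $V_0$ by at most $2^n e^{n(tk + (n_1-1)\epsilon)}$ smaller tubes $\tilde W$ of $+$-radius $e^{-t(n_1+k-1)}s$. For each pair $(\tilde V, \tilde W)$, Lemma \ref{prep3} produces a single tube $U_{\mathrm{new}}$ of $+$-radius $e^{-tm + (m - n_1 - k + 1)\epsilon}s \le e^{-(t-\epsilon)m}s$. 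Multiplying the counts gives
\[
  2^n e^{n(tk + (n_1-1)\epsilon)} \cdot 2^{(N-k)n} e^{nt(N-k) + \epsilon n(m - n_1 - N)} = 2^{(N-k+1)n} e^{ntN + \epsilon n(m - N - 1)} \le 2^{Nn} e^{ntN + \epsilon n(m - N)}.
\]
The boundary case $n_1 + k > m$ (all indices from $n_1$ to $m$ are bad, so $N = m - n_1 + 1$) is simpler: one discards the entire bad run and covers $V_0$ directly by at most $2^n e^{n(t-\epsilon)N}$ tubes of the target $+$-radius $e^{-(t-\epsilon)m}s$, which is bounded by $2^{Nn} e^{ntN + \epsilon n(m-N)}$.

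The main obstacle is the bookkeeping that ensures cancellation of $n_1$ in the exponents, leaving only dependence on $m$ and $N$. Peeling from the smallest bad index is essential: an analogous peeling from the largest index $n_N$ would produce a residual factor $e^{\epsilon n(2n_N - m - 1)}$ that cannot be absorbed into $e^{\epsilon n(m-N)}$ when $n_N$ is close to $m$. A secondary technicality is the centering adjustment when enlarging $U_{\mathrm{new}}$ (of $+$-radius at most $e^{-(t-\epsilon)m}s$) to a set of the formal target shape $B^+(e^{-(t-\epsilon)m}s) + u \subset B^+_s$; this is always possible because the target radius is at most $s$, so the center $u$ may be shifted inward when $U_{\mathrm{new}}$ is close to the boundary of $B^+_s$.
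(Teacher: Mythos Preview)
Your argument is correct and follows essentially the same route as the paper: peel off the first block of bad indices, apply Lemma~\ref{prep1} to the initial good run, subdivide that tube to match the $+$-radius $e^{-t(l-1)}s$ required by Lemma~\ref{prep3}, apply the inductive hypothesis to the tail, and combine via Lemma~\ref{prep3}; the counting is identical. The only cosmetic differences are that you induct on $N$ (strong induction) rather than on the number $D$ of blocks, and you make explicit the boundary case $n_1+k>m$ that the paper handles only implicitly.
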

\begin{proof}
Let $D$ be the total number of blocks of numbers in $\mathcal N$, i.e.\  $D=1$ if 
$\mathcal N=\{i,i+1,\dots, i+j\}\subset \mathbb N$.
We are going to prove the lemma by induction on $D$. If $D=0$, then $N=0$ and it is
proved in Lemma \ref{prep1}.

Now suppose $D>0$ and the lemma is true for $D-1$. Let $i+1, \ldots, i+j$ be the first block such that $\{i+1, \ldots, i+j\}\subset \mathcal N$. So 
\begin{equation}\label{prep2.1}
k\not \in 
\mathcal N\mbox{ if }k\le i\quad\mbox{and}\quad i+j+1\not \in \mathcal N.
\end{equation}

\noindent Lemma \ref{prep1} applies to $Q_0\cap
T^{-1}Q_1\cap\cdots\cap
T^{-i}Q_{i} = \varphi(W).x_0$ and tells us that $W$ is contained  a tube-like
set of the form
\begin{eqnarray*}
\empty & B_s^-+B_s^0+B^+(e^{-(t-\epsilon)i}s)+u.
\end{eqnarray*}
For the interval $B^{\mathbb R}_{e^{-(t-\epsilon)i}s}$,  it can be covered by as few as 
\[
  2\frac{e^{-(t-\epsilon)i}}{e^{-t(i+j)}}=2e^{tj+\epsilon i}
\]
open intervals of the form $B^{\mathbb R}_{e^{-t(i+j)}s}+b\subset 
B^{\mathbb R}_{e^{-(t-\epsilon)i}s}$ where $b\in \mathbb R$.
Since $B^+(e^{-(t-\epsilon)i}s) $  is isomorphic to the product of $n$ copies of 
$B_{e^{-(t-\epsilon)i}s}$, it can be covered by 
 as few as
\begin{equation}
 \label{prep2.2}
(2e^{tj+\epsilon i})^n=2^ne^{ntj+\epsilon in}
\end{equation}
tube-like sets of the form 
\begin{equation}
 \label{prep2.3}
B_s^-+B_s^0+B^+(e^{-t(i+j)}s )+w.
\end{equation}
Let $W_1$ be one of them. 

Now let us consider 
\[
 Q_{i+j+1}\cap
T^{-1}Q_{i+j+2}\cap\cdots\cap
T^{-(m-i-j-1)}Q_{m}= \varphi(V).x_{i+j+1}
\]
for some open subset $V$ of $B$.  The induction hypothesis implies that $V$ can
be covered by as few as 
\begin{equation}
 \label{prep2.4}
2^{(N-j)n}e^{nt(N-j)+\epsilon (m-i-j-1-(N-j))n}=2^{(N-j)n}e^{nt(N-j)+\epsilon (m-i-N-1)n}
\end{equation}
tube-like sets of the form 
\begin{equation}
 \label{prep2.5}
B_s^-+B_s^0+B^+(e^{-(t-\epsilon)(m-i-j-1)}s)+v.
\end{equation}
Let $V_1$ be one of them.
As the product of (\ref{prep2.2}) and (\ref{prep2.4}) is bounded by the number in
(\ref{prep2.0}), it remains to see
\[
 \varphi(W_1).x_0\cap T^{-i-j-1}(\varphi(V_1).x_{i+j+1})=\varphi(U).x_0
\]
for some open subset $U$ which is  contained in a tube-like set of the
 form
\begin{equation}\label{prep2.f}
  B_s^-+B_s^0+B^+(e^{-(t-\epsilon)m}s)+u.
\end{equation}
Lemma \ref{prep3} ($m$ and $l$ there equal $m-i-j-1$ and $i+j+1$) implies that $U$ is a contained in a tube-like set 
\[
 B_s^-+B_s^0+B^+(e^{-(t-\epsilon)m+(m-i-j)\epsilon}s)+u
\]
which is a subset of (\ref{prep2.f}).
\end{proof}

\section{Applications}\label{conclu}

In this section we are going to interpret the improvements of DT in the setting of  homogeneous space. Let $G=SL(m+n,\mathbb R)$, $\Gamma=SL(m+n,\mathbb Z), X=\Gamma\backslash
G$ and $m_X$ be the usual probability Haar measure on $X$. 
$G$ acts on $\mathbb R^{m+n}$ (considered as $M_{1,m+n}$) by 
$g(\xi)=\xi g$ as matrix multiplication.
Let $\Omega$ be the set of unimodular lattices in $\mathbb R^{m+n}$.
$G$ acts on 
$\Omega$ by $g(\Delta)=\Delta g=\{vg:v\in \Delta\}$.
$G$ acts transitively on $\Omega$   and the stabilizer of $\mathbb Z^{m+n}$ is 
$\Gamma$.
Thus $\Gamma\backslash G\cong \Omega$ as a set. We endow $\Omega$ with the natural locally compact topology of $\Gamma\backslash G$. In
 this topology, a sequence $\{\Delta_i\}_i$ converges to a lattice $\Delta$ iff
$\Delta_i$ has a basis $\{b_1^{(i)},\ldots, b_{m+n}^{(i)}\}$ and $\Delta$ has a basis $\{b_1,\ldots, b_{m+n}\}$ such that
\begin{equation}\label{basis}
 \lim_i b_1^{(i)}=b_1,\ldots,\lim_i b_{m+n}^{(i)}=b_{m+n}.
\end{equation}

 $ M_{m,n}= \mathbb R^{mn}$ stands for the space of        $m\times n$ 
matrices with real 
entries. Recall that $\|\cdot \|$ stands for the sup norm of $\mathbb R^{k}$
and $B_s(x)$ (or $B_s$ if $x=0$) stands for the ball of radius
$s$ centered at $x$ under the sup norm. 
 There is a map
\begin{equation}\label{linkho}
 \phi:M_{m,n}\to SL(m+n,\mathbb R)
\end{equation}
which sends $Y\in M_{m,n}$ to the block matrix $\left(
\begin{array}{cc}
 I_n & 0 \\
Y & I_m
\end{array}
\right)$. 
Let $N$ be a positive integer and $t=\log N$. We set 
\[
 a_N=\left(
\begin{array}{cc}
 e^{-tm}I_n & 0 \\
0 &  e^{tn}I_m
\end{array}
\right)=\left(
\begin{array}{cc}
 N^{-m}I_n & 0 \\
0 &  N^nI_m
\end{array}
\right).
\]

Recall $Y\in \mathrm{DI}_\sigma$  iff there exist
$\mathbf{q}\in\mathbb Z^m \backslash \{0\}$ and $\mathbf{p}
\in \mathbb Z^n$ such that
\begin{equation}\label{diric1}
 {N^m}\|\mathbf q Y+\mathbf p\|\le {\sigma} \quad\mbox{and} \quad 
N^{-n}\|\mathbf q\|\le \sigma   
\end{equation}
for $N$ large enough.
(\ref{diric1}) is equivalent to 
\begin{equation}\label{diric2}
 \|(\mathbf p,\mathbf q)\phi(Y)a_N^{-1}\|=\|
\left(
N^m(\mathbf p+\mathbf q Y),N^{-n}\mathbf q
\right)\|\le \sigma.
\end{equation}
So $Y\in\mathrm{DI}_\sigma$
 iff 
\begin{equation}\label{app1}
 \min\{\|\xi\|:\xi\in\mathbb Z^{m+n}\phi(Y)a_N^{-1},\ \xi\neq { 0}\}
\le \sigma
\end{equation}
for all large enough $N$ depending on $Y$ and $\sigma$.

Let 
\[
K_\sigma=\{\Delta\in \Omega:\min_{\xi\in \Delta\backslash \mathbf{0}}
\|\xi\|> \sigma\}
\quad \mbox{and}\quad \mathbf x=
\mathbb Z^{m+n}=\Gamma\in X.
\]
It is well-known that if $0<\sigma<1$, then $K_\sigma$ is an open neighborhood of 
$\mathbf {x}$. Also, the larger the $\sigma$ is, the smaller the set $K_\sigma$ would be.
 In these notations (\ref{app1}) 
is the same as \begin{equation}\label{app2}
               \mathbf x \phi(Y)a_N^{-1} \not\in K_\sigma.
               \end{equation}
So DT can  be $\sigma$-improved for $Y$ if (\ref{app2})
holds for all  $a_N$ with large enough $N$. 
Let $\tau:M_{m,n}\to X$ be the map which sends $Y$ to $\mathbf{x}
\phi(Y)$. For $a\in G$, we use $T_a:X\to X$ to denote the map that sends $x\in X$
to $xa^{-1}$.

\begin{theorem}\label{diophantine}
 Let $\mu$ be a locally finite measure on $M_{m,n}$ and 
$T=T_{a}$ where $a=a_M$ for some integer $M>1$. If
there exists $s_0>0$ such that for any $s<s_0$ and any ball $B_s(x)$ 
%with $\mu(B_s(x)) \neq 0$ 
one has
\begin{equation}\label{econdit}
 \lim_{k\to \infty}\frac{1}{k}\sum_{l=0}^{k-1}T_\ast^l(\tau_* 
(\mu|_{B_s(x)}))=\mu(B_s(x))m_X,
\end{equation}
then DT can not be improved for $\mu$ almost 
every element.
\end{theorem}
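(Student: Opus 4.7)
The plan is to show $\mu(\mathrm{DI}_\sigma) = 0$ for every $\sigma < 1$; a countable union over $\sigma = 1 - 1/k$ then yields the theorem. Fix $\sigma < 1$. Since $a_M^l = a_{M^l}$, the characterization (\ref{app2}) shows that $Y \in \mathrm{DI}_\sigma$ forces $T^l(\tau(Y)) \in K_\sigma^c := X \setminus K_\sigma$ for every sufficiently large $l$. Hence $\mathrm{DI}_\sigma \subset \bigcup_{l_0 \ge 0} A_{\sigma, l_0}$, where
\[
A_{\sigma, l_0} := \{Y \in M_{m,n} : T^l(\tau(Y)) \in K_\sigma^c \text{ for all } l \ge l_0\},
\]
and it suffices to show $\mu(A_{\sigma, l_0}) = 0$ for each $l_0$.

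Let $c := m_X(K_\sigma^c)$. Since $K_\sigma$ is a nonempty open neighbourhood of $\mathbf{x}$, we have $m_X(K_\sigma) > 0$, so $c < 1$. For any ball $B = B_s(x)$ with $s < s_0$ and $\mu(B) > 0$, the hypothesis (\ref{econdit}) gives the weak-$*$ convergence
\[
\nu_k := \frac{1}{k\,\mu(B)} \sum_{l=0}^{k-1} T_*^l(\tau_*(\mu|_B)) \longrightarrow m_X.
\]
Total masses are preserved ($\nu_k(X) = 1 = m_X(X)$), so no mass escapes and the Portmanteau inequality on the closed set $K_\sigma^c$ gives $\limsup_k \nu_k(K_\sigma^c) \le c$. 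On the other hand, for each $Y \in B \cap A_{\sigma, l_0}$ and $l \ge l_0$ one has $\mathbf{1}_{K_\sigma^c}(T^l \tau(Y)) = 1$, so
\[
\nu_k(K_\sigma^c) \ge \frac{k - l_0}{k}\cdot \frac{\mu(B \cap A_{\sigma, l_0})}{\mu(B)}.
\]
Letting $k \to \infty$ yields the uniform density bound $\mu(B \cap A_{\sigma, l_0})/\mu(B) \le c < 1$ for every such ball $B$.

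To finish, I would invoke the Lebesgue differentiation theorem for Radon measures on $\mathbb{R}^{mn}$, valid via Besicovitch's covering theorem without any regularity assumption on $\mu$: if $\mu(A_{\sigma, l_0}) > 0$ then there exists a density point $y \in A_{\sigma, l_0}$ at which $\mu(B_s(y) \cap A_{\sigma, l_0})/\mu(B_s(y)) \to 1$ as $s \to 0$, contradicting $c < 1$. Hence $\mu(A_{\sigma, l_0}) = 0$ and the theorem follows. The one delicate point is the Portmanteau estimate on the non-compact closed set $K_\sigma^c$; this comes from tightness of $\{\nu_k\}$, which is automatic because $\nu_k$ and the limit $m_X$ are probability measures of equal total mass, and is then deduced from the standard inequality $\liminf_k \nu_k(K_\sigma) \ge m_X(K_\sigma)$ applied to the open set $K_\sigma$.
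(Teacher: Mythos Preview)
Your proof is correct and follows essentially the same strategy as the paper: derive from the equidistribution hypothesis a uniform density bound $\mu(\text{bad set}\cap B)\le\tau\,\mu(B)$ with $\tau<1$ for all small balls $B$, and then conclude that the bad set is $\mu$-null. The only differences are cosmetic---the paper extracts the density bound via a compactly supported test function and dominated convergence rather than Portmanteau, and it finishes with an explicit disjoint-ball covering contradiction in place of your appeal to the Besicovitch--Lebesgue density theorem.
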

\begin{proof}
 We need to show for any $0<\sigma<1$, DT can not be $\sigma$-improved for $\mu$ 
almost 
every $Y$. So let us fix some $0<\sigma<1$ and prove $\mu(\mathrm{DI}_\sigma)=0$. 
Since $\mu$ is locally finite, there are sufficiently large real numbers $R$ such that $\mu(\partial B_R)=0$. So it suffices to prove $\mu(\mathrm{DI}_\sigma
\cap B_R)=0$ if $\mu(\partial B_R)=0$. Let us  fix such a positive number $R$. 

\noindent
\textbf{Claim}: there exists $0<\tau<1$ depending on $\sigma$ such that for any $s<s_0$ one has 
\begin{equation}\label{62.1}
\mu(\mathrm{DI}_\sigma\cap B_s(x))\le \tau  \mu(B_s(x))
\end{equation}
for any $x\in M_{m,n}$. Let us assume the claim for the moment and prove
$\mu(\mathrm{DI}_\sigma
\cap B_R)=0$. Suppose otherwise, then  we may choose an open subset $U$ of $B_R$ containing $ 
\mathrm{DI}_\sigma\cap B_R$ such that
\begin{equation}\label{62.2}
 \mu(U)<\frac{1}{\tau}\mu(\mathrm{DI}_\sigma\cap B_R).
\end{equation}
Since $\mu$ is locally finite, $U$ can be covered (measure theoretically)
by countably many disjoint balls  $B_{s_i}(x_i)\subset U$ for $s_i<s_0$
and $x_i\in X$.
By (\ref{62.1}),
\begin{equation}\label{62.3}
 \mu(U)=\sum_i\mu(B_{s_i}(x_i))\ge \frac{1}{\tau}\sum_i
\mu(\mathrm{DI}_\sigma\cap B_{s_i}(x_i))=\frac{1}{\tau}\mu(\mathrm{DI}_\sigma\cap B_R).
\end{equation}
By  (\ref{62.2}) and (\ref{62.3}), we have
\[
 \mu(U)>\mu(U).
\]
This contradiction shows that $\mu(\mathrm{DI}_\sigma\cap B_R)=0$.

Let us prove the claim. We fix $0<s<s_0$ and some $x\in M_{m,n}$.
Since $\sigma<1$,  $K_\sigma$ is an open
neighborhood of $\mathbf x$. Hence there exists $\epsilon>0$ such that
\[
 m_X(K_\sigma)>\epsilon.
\]
So there exists a continuous function $0\le f\le 1$ such that 
\[
 \mathrm{supp}(f)\subset K_\epsilon \quad \mbox{and}\quad 
\int_X f\,\mathrm{d}m_X >\frac{\epsilon}{2}.
\]
We apply this $f$ for (\ref{econdit}), then 
\begin{equation}\label{app4}
 \lim_{k\to \infty}\frac{1}{k}\sum_{l=0}^{k-1} 
\int_{B_s(x)} f(\mathbf x\phi(b)a^{-l}_M)\, \mathrm d \mu(b)=\mu(B_s(x))             \int_X f\, \mathrm d m_X  >\frac{\epsilon}{2}\mu(B_s(x)).
\end{equation}
Let \[
     E=\{b\in B_s(x): \mathbf x\phi(b)a^{-l}_M\not \in K_\sigma  \quad
\mbox{for } l \mbox{ large enough}\}.
    \]
As in (\ref{app2}),
if $b\in \mathrm{DI}_\sigma$, then $\mathbf x\phi(b)a_N^{-1}\not\in K_\sigma$
for all large $N$. In particular 
\[
\mathbf x\phi(b)a^{-l}_M=\mathbf x\phi(b)a^{-1}_{M^l}\not\in K_\sigma
\]
for $l$ large enough. So we have
\begin{equation}\label{app5}
\mathrm{DI}_\sigma\cap B_s(x)\subset E
\end{equation}
From the definition of $E$ and $f$ we see that 
\begin{equation}\label{app6}
 \lim_{k\to \infty}\frac{1}{k}\sum_{l=0}^{k-1}f(\mathbf x\phi(b)a^{-l}_M)
=0
\end{equation}
if $b\in E$.
Note as a  function  of $b$, $\frac{1}{k}\sum_{l=0}^{k-1}f(\mathbf x\phi(b)a^{-l}_M)$
is bounded above by the constant function $1$, so 
the dominated convergence theorem implies
\begin{equation}\label{app2.145}
 \lim_{k\to \infty}\frac{1}{k}\sum_{l=0}^{k-1} 
\int_{E} f(\mathbf x\phi(b)a^{-l}_M)\, \mathrm d \mu(b)=0.
\end{equation}
By (\ref{app4}) and (\ref{app2.145}),
\[
 \mu(B_s(x)\backslash E)\ge \lim_{k\to \infty}\frac{1}{k}\sum_{l=0}^{k-1} 
\int_{B_s(x)\backslash E} f(\mathbf x\phi(b)a^{-l}_M)\, \mathrm d \mu(b)>\frac{\epsilon}{2}
\mu(B_s(x)).
\]
Combine this with (\ref{app5}), we have 
\begin{equation}\label{6.2.8}
 \mu(\mathrm{DI}_\sigma\cap B_s(x))\le 
\mu(E)\le 
\left(1-\frac{\epsilon}{2}\right)
\mu(B_s(x)).
\end{equation}
Since $\epsilon$ only depends on $\sigma$, we may set $\tau=1-\frac{\epsilon}{2}$. This completes the proof of the claim.
\end{proof}
\begin{remark}\label{rdiophantine}
  Suppose $\mathrm{supp}(\mu)$ is contained in a compact set 
$A=\mathrm{clo}(B_R(y))$  for some $R>0$ and $y\in M_{m,n}$
such that  $\mu(\partial A)=0$. It is easy to see from  the proof of Theorem \ref{diophantine} that it suffices to assume (\ref{econdit}) holds for $B_s(x)\subset A$. 
\end{remark}

\begin{theorem}\label{conclut}
Let $\mu$ be a Borel probability measure on $[0,1]^{mn}\subset M_{m,n}$ with
local maximal dimension. If $\tau_*\mu$ has no  loss of mass on average with
respect to $T=T_{a}$ where $a=a_M$ for some integer $M>0$, then DT can not be 
improved for $\mu$ almost every element.
\end{theorem}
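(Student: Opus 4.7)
The argument is a direct synthesis of Theorem \ref{diophantine}, Theorem \ref{wequi}, Proposition \ref{decompo}, and Lemma \ref{noesma}. First I would verify that the homogeneous setup of Section \ref{conclu} fits the abstract framework of Section \ref{cdact}. With $G=SL(m+n,\mathbb R)$, the element $a=a_M$ has eigenvalues $M^{-m}$ (multiplicity $n$) and $M^n$ (multiplicity $m$) on $\mathbb R^{m+n}$, so $\mathrm{Ad}_a$ has three distinct eigenvalues $1$, $M^{m+n}$, $M^{-(m+n)}$, all of distinct absolute value and only $1$ of absolute value one; hence $a$ is of class $\mathscr A$. The unstable horospherical subgroup $G^+$ is exactly $\phi(M_{m,n})$, which is abelian, and its Lie algebra $\mathfrak g_+$ is a single eigenspace of $\mathrm{Ad}_a$ with eigenvalue $M^{m+n}$ (so the parameter $t$ of Section \ref{cdact} equals $(m+n)\log M$ and the parameter $n$ of Section \ref{cdact} equals $mn$ here). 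Finally, since $SL(m+n,\mathbb R)$ is simple modulo its finite center, the closed normal subgroup generated by $G^+$ and $G^-$ is all of $G$, so the Auslander normal subgroup of $a$ acts uniquely ergodically on $X$.

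Next, by Theorem \ref{diophantine} together with Remark \ref{rdiophantine}, it suffices to prove that for every sufficiently small ball $B_s(x)\subset [0,1]^{mn}$ with $\mu(B_s(x))>0$ one has
\[
\lim_{k\to\infty}\frac{1}{k}\sum_{l=0}^{k-1} T_*^l\bigl(\tau_*(\mu|_{B_s(x)})\bigr)=\mu(B_s(x))\,m_X.
\]
Dividing by $\mu(B_s(x))$, this amounts to showing equidistribution on average of $\tau_*\nu_{x,s}$ toward $m_X$, where $\nu_{x,s}$ denotes the normalized restriction $\mu(B_s(x))^{-1}\mu|_{B_s(x)}$.

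The plan is then to check that each $\tau_*\nu_{x,s}$ satisfies the hypotheses of Theorem \ref{wequi}. Proposition \ref{decompo} applied with ambient dimension $mn$ produces an $s_0>0$ such that, for every $s<s_0$, the pushforward $\tau_*\nu_{x,s}$ has local maximal dimension in the unstable horospherical direction. To transfer the no-loss-of-mass hypothesis, I would decompose
\[
\tau_*\mu \;=\; \mu(B_s(x))\,\tau_*\nu_{x,s} \;+\; \bigl(1-\mu(B_s(x))\bigr)\,\tau_*\mu',
\]
where $\mu'$ is the normalized restriction of $\mu$ to $[0,1]^{mn}\setminus B_s(x)$, and invoke Lemma \ref{noesma} to conclude that $\tau_*\nu_{x,s}$ also has no loss of mass on average. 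Theorem \ref{wequi} then yields the required equidistribution, and Theorem \ref{diophantine} closes the argument.

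I do not expect a genuine obstacle, since every needed ingredient has been established in the earlier sections; the proof is an assembly. The most delicate point is the geometric one in the reduction: Proposition \ref{decompo} requires $B_s(x)\subset J=[0,1]^{mn}$, so balls meeting the boundary of $[0,1]^{mn}$ must be handled separately. This is not a real difficulty, because any ball disjoint from $\mathrm{supp}(\mu)\subset[0,1]^{mn}$ trivially satisfies the required equidistribution (both sides vanish), and the covering argument inside the proof of Theorem \ref{diophantine} can be performed using balls centered in $[0,1]^{mn}$ of radius small enough to lie inside $J$.
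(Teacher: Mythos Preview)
Your proposal is correct and follows essentially the same route as the paper: invoke Proposition \ref{decompo} to get local maximal dimension in the unstable horospherical direction for the normalized restrictions, transfer non-escape of mass via Lemma \ref{noesma}, apply Theorem \ref{wequi} for equidistribution, and conclude with Theorem \ref{diophantine} and Remark \ref{rdiophantine}. The only minor difference is in the boundary issue: the paper simply observes that local maximal dimension forces $\mu(\partial J)=0$ (cover $\partial J$ by $O(\delta^{-(mn-1)})$ balls of radius $\delta s_0$, each of $\mu$-measure $\ll_\epsilon \delta^{mn-\epsilon}$), whereas you argue by restricting the covering in Theorem \ref{diophantine} to balls contained in $J$; both treatments are fine.
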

\begin{proof}
 By Proposition \ref{decompo}, there exists $s_0>0$ such that
if $B_s(x)\subset J$ and $\mu(B_s(x))\neq 0$ for some $0<s<s_0$, then $\tau_*\nu$ where 
$\nu=\frac{1}{\mu(B_s(x))}\mu|_{B_s(x)}$ has local maximal dimension in the unstable horospherical direction.

Since $\tau_*\mu$ has no loss of mass on average, Lemma \ref{noesma} implies 
$\tau_*\nu$ has no loss of mass on average. Therefore Theorem \ref{wequi}
implies 
\[
 \lim_{k\to \infty}\frac{1}{k}\sum_{l=0}^{k-1}T_\ast^l(\tau_*\nu) =m_X.
\]
That is 
\[
\lim_{k\to \infty}\frac{1}{k}\sum_{l=0}^{k-1}T_\ast^l(\tau_* 
(\mu|_{B_s(x)}))=\mu(B_s(x))m_X.
\]
It is easy to see from the local maximal dimension property of $\mu$ that $\mu(\partial J)=0$,
so the assumptions of Theorem \ref{diophantine} and  Remark \ref{rdiophantine} are satisfied. Therefore
the conclusion follows.
\end{proof}
\noindent
In author's opinion, the assumption of non-escape of mass is superfluous in 
Theorem \ref{conclut}. The following are some facts about non-escape of mass property of a measure with local maximal dimension:
\begin{itemize}
 \item  $m=n=1$ and $G=SL(2,\mathbb R)$. 

\noindent This is proved 
in an unpublished paper of  Einsiedler, Lindenstrauss, Michel and Venkatesh
using hyperbolic geometry of the upper half plane.

\item $m=1,  n=2$ and $G=SL(3,\mathbb R)$. 

\noindent
This is proved by Einsiedler and Kadyrov. In fact they are  working on the non-escape of mass problem uner weaker
assumptions and trying to generalize their method to the cases with $m=1$ or $n=1$.

\item $\mu$ is in addition Federer. 

\noindent
Since Federer and local maximal dimension imply friendly (Theorem \ref{lft}),
Corollary \ref{cfrie} gives the conclusion. This proves Theorem \ref{obtain}.
\end{itemize}


\begin{thebibliography}{99}

%\bibitem[Be]{Be}
%{   K.R. Berg}, {\it Convolution of invariant measures, maximal entropy}, Math. Systems Theory {\bf 3} (1969), 146-150.

%\bibitem[Bo]{Bo}
%{   R. Bowen}, {\it Entropy for group endomorphisms and homogeneous spaces}, Trans. Amer. Math. Soc. {\bf 153} (1971), 401-414.

\bibitem[Da1]{Da}
{   S.G. Dani}, {\it Divergent trajectories of flows on homogeneous spaces and 
Diophantine approximation}, J. Reine Angew. Math. {\bf 359} (1985), 55-89.

%\bibitem[Da2]{Da2}
%{   S.G. Dani}, {\it On orbits of unipotent flows on homogeneous spaces}, II, Ergodic
%Theory Dynamical Systems {\bf 6} (1986), 167-182.

\bibitem[DS]{DS}
{   H. Davenport and W.M. Schmidt}, {\it Dirichlet's theorem on diophantine approximation},
II,  Acta Arith.  {\bf 16} (1969/1970), 413-424.

 \bibitem[EW]{EW07}
{   M. Einsiedler and T. Ward}, {\it Ergodic theory: with a view towards number theory},
in preparation, some chapters available online, 
 http://www.mth.uea.ac.uk/ergodic/.



\bibitem[EL]{EL08}
{   M. Einsiedler and E. Lindenstrauss},
{\it Diagonal actions on locally homogeneous spaces}, online, 2008.


\bibitem[KLW]{KLW}
{   D. Kleinbock, E. Lindenstrauss and Barak Weiss}, {\it On fractal measures and 
diophantine approximation}, Selecta Math. {\bf 10} (2004), 479-523.

\bibitem[KM]{KM}
{   D. Kleinbock and G.A. Margulis}, {\it Flows on homogeneous spaces and Diophantine 
approximation on manifolds}, Ann. of Math. {\bf 148} (1998), 339-360.

\bibitem[KW]{KW}
{   D. Kleinbock and Barak. Weiss}, {\it Dirichlet's theorem on Diophantine approximation and
homogeneous flows},  J. Mod. Dyn. {\bf 2} (2008), no. 1, 43-62. 

\bibitem[LY]{LY85}
{   F. Ledrappier and L.-S. Young}, {\it The metric entropy of diffeomorphisms}, I, Ann. Math.
{\bf 122} (1985), 503-539.

%\bibitem[Ma]{Ma}
%{   K. Mahler}, {\it On lattice points in n-dimensional star bodies: I. Existence theorems}, Proc. Roy. soc. London {\bf A 187} (1946), 151-187.

%\bibitem[Mau]{Mau}
%{   F.I. Mautner}, {\it Geodesic flows on symmetric Riemannian spaces}, Ann. Math. {\bf 65} (1957), 416-430.

\bibitem[MT]{MT94} 
{   G. A. Margulis and G. M. Tomanov}, {\it Invariant measures for actions of unipotent groups over local fields on homogeneous spaces}, {Invent. Math.} {\bf 116} (1994),
347-392.

\bibitem[Ra]{Ra}
M. S. Raghunathan, {\it Discrete subgroups of Lie groups}, Springer, New York, 1972.

\bibitem[Sc]{Sc}
{   W.M. Schmidt}, {\it Diophantine approximation}, Lecture Notes in Mathematics, vol. 785, 
Springer-Verlag, Berlin, 1980.

\bibitem[Sh]{Sh}
{   N.A. Shah}, {\it Equidistribution of expanding translates of curves and Dirichlet's 
theorem on Diophantine approximation}, to appear in Inventiones Math., 
arXiv:0802.3278v1. 

\bibitem[S]{Shi}
R. Shi, {\it Equidistribution of expanding measures  with local maximal dimension and Diophantine
approximation}, Ph.D. Thesis, Ohio State University, 2009.

\bibitem[Wa]{Wa}
{   P. Walters}, {\it An introduction to ergodic theory},  Graduate Texts in Mathematics 79. Springer-Verlag, New York, 1982.

%\bibitem[Pe]{Pe}
%{   Ya. Pesin}, {\it Dimension theory in dynamical systems: contemporary views and applications}, Chicago University Press, 1997.
\end{thebibliography}
\end{document}